\newcommand{\mm}{\mathfrak m}
\newcommand{\C}{\mathbb{C}}
\newcommand{\N}{\mathbb{N}}
\newcommand{\R}{\mathbb{R}}
\newcommand{\Z}{\mathbb{Z}}
\newcommand{\Ac}{\mathcal{A}}
\newcommand{\Ic}{\mathcal{I}}
\newcommand{\Mcc}{\mathcal{M}}
\newcommand{\vb}{{\bf v}}
\DeclareMathOperator{\cl}{cl}
\DeclareMathOperator{\cx}{cx}
\DeclareMathOperator{\depth}{depth}
\DeclareMathOperator{\Ext}{Ext}
\DeclareMathOperator{\gin}{gin}
\DeclareMathOperator{\Hom}{Hom}
\DeclareMathOperator{\ini}{in}
\DeclareMathOperator{\Ker}{Ker}
\DeclareMathOperator{\nbc}{nbc}
\DeclareMathOperator{\reg}{reg}
\DeclareMathOperator{\supp}{supp}
\DeclareMathOperator{\Tor}{Tor}
\DeclareMathOperator{\tensor}{\otimes}
\DeclareMathOperator{\Dirsum}{\bigoplus}
\DeclareMathOperator{\pnt}{\raise 0.5mm \hbox{\large\bf.}}
\DeclareMathOperator{\mlpnt}{\!\!\hbox{\large\bf.}}
\DeclareMathOperator{\lpnt}{\hbox{\large\bf.}}
\newcommand{\fall}{\mbox{for all} ~}
\newtheorem{thm}{\bf Theorem}[section]
\newtheorem{lem}[thm]{\bf Lemma}
\newtheorem{cor}[thm]{\bf Corollary}
\newtheorem{prop}[thm]{\bf Proposition}
\theoremstyle{definition}
\newtheorem{defn}[thm]{\bf Definition}
\newtheorem{rem}[thm]{\bf Remark}
\newtheorem{ex}[thm]{\bf Example}
\title{Homological properties of Orlik-Solomon algebras}
\author{Gesa K\"ampf}
\address{Universit\"at Osnabr\"uck, Institut f\"ur Mathematik, 49069 Osnabr\"uck, Germany}
\email{gkaempf@uos.de}
\author{Tim R\"omer}
\address{Universit\"at Osnabr\"uck, Institut f\"ur Mathematik, 49069 Osnabr\"uck, Germany}
\email{troemer@uos.de}
\begin{document}

\begin{abstract}
The Orlik-Solomon algebra of a matroid can be considered as a quotient
ring over the exterior algebra $E$. At first we study homological properties
of $E$-modules
as e.g.\ complexity, depth and regularity.
In particular, we consider modules with linear injective resolutions.
We apply our results to Orlik-Solomon algebras of matroids
and give formulas for the complexity, depth  and
regularity of such rings in terms of invariants of the matroid. Moreover,
we characterize those matroids whose Orlik-Solomon ideal has a linear
projective resolution and compute in these cases the Betti numbers of the ideal.
\end{abstract}

%\subjclass{}
%\keywords{}

\maketitle

%------------------------------------------------------------------------
%
%
%
%------------------------------------------------------------------------
\section{Introduction}
\label{intro}
Let $\Ac=\{H_1,\dots,H_n\}$ be an essential central affine
hyperplane arrangement in $\C^m$, $X$ its complement and $K$ a
field. We choose linear forms $\alpha_i \in (\C^m)^*$ such that
$\Ker \alpha_i=H_i$ for $i=1,\dots,n$. Let $E=K\langle e_1,\dots,e_n
\rangle$ be the standard graded exterior algebra over $K$ where
$\deg e_i=1$ for $i=1,\dots,n$ and $\mm=(e_1,\ldots,e_n)$. For $S=\{j_1,\dots,j_t\}\subseteq
[n]=\{1,\dots,n\}$ we set $e_S=e_{j_1}\wedge \dots \wedge e_{j_t}$.
Usually we assume that $1\leq j_1<\dots< j_t\leq n$.
The elements $e_S$ are called \emph{monomials} in $E$.
It is
well-known that the singular cohomology $H^{\pnt}(X;K)$ of $X$ with
coefficients in $K$ is isomorphic to $E/J$ where $J$ is the
\emph{Orlik-Solomon ideal} of $X$ which is generated by all
\begin{equation}
\label{generators}
\partial{e_S}
= \sum_{i=1}^t (-1)^{i-1}  e_{j_1}\wedge \dots\wedge \widehat{e_{j_i}}
\wedge\dots\wedge e_{j_t} \text{ for } S=\{j_1,\dots,j_t\}\subseteq
[n]
\end{equation}
where $\{H_{j_1},\dots,H_{j_t}\}$ is a dependent set of hyperplanes
of $\Ac$, i.e.\ $\alpha_{j_1},\dots,\alpha_{j_t}$ are linearly
dependent. The algebra $E/J$ is also known as the
\emph{Orlik-Solomon algebra} of $X$. In the last decades many
researchers have studied the relationship between ring properties of
$E/J$ and properties of $\Ac$. See, e.g., the book of Orlik-Terao
\cite{ORTE} and the survey of Yuzvinsky \cite{Yu} for details.

Note that the definition of $E/J$ does only depend on the matroid of
$\Ac$ on $[n]$. For an arbitrary matroid on
$[n]$ the Orlik-Solomon algebra $E/J$ is defined as in the case of
hyperplane arrangements, i.e.\ $J$ is the ideal generated by
all $\partial{e_S}$ defined as in (\ref{generators}) where $S\subseteq
[n]$ is a dependent set of the given matroid. We are in particular
interested to investigate (co-)homological properties of
Orlik-Solomon algebras as modules over $E$. See, e.g., \cite{DY,
EPY, PE03,SS02, SS05} for related results.

In the first part of the paper we consider
arbitrary graded modules over the exterior algebra and
we study several algebraic and homological invariants of
such modules. In the second part of the paper we
apply these results to Orlik-Solomon algebras of matroids on $[n]$.

Let $\Mcc$ be the category of finitely generated
graded left and right $E$-modules $M$ satisfying $am=(-1)^{\deg
a\deg m}ma$ for homogeneous elements $a\in E$, $m\in M$.
For example if $J\subseteq E$ is a graded ideal, then $E/J$ belongs to $\Mcc$.

Let $M\in \Mcc$. Following \cite{AAH} we call
an element $v\in E_1$ \emph{regular} on $M$ (or
$M$\emph{-regular}) if the annihilator $0:_M v$ of $v$ in $M$ is the smallest
possible, that is, the submodule $vM$. An $M$\emph{-regular
sequence} is a sequence $v_1,\ldots,v_s$ in $E_1$ such that $v_i$ is
$M/(v_1,\ldots,v_{i-1})M$-regular for $i=1,\ldots,s$ and
$M/(v_1,\ldots,v_s)M \neq 0$. Every $M$-regular sequence can be
extended to a maximal one and all maximal regular sequences have the
same length. This length is called the \emph{depth} of $M$ over $E$
and is denoted by $\depth  M$.

For $i \in \N$ and $j \in \Z$ we call
$\beta_{i,j}(M)=\dim_K\Tor^E_i(K,M)_{j}$  the \emph{graded Betti numbers} and
$\mu_{i,j}(M)=\dim_K\Ext^i_E(K,M)_j$ the \emph{graded Bass numbers} of $M$.
Recall that $M \in \Mcc$ has a \emph{$d$-linear (projective) resolution}
if $\beta_{i,i+j}(M)=0$ for all $i$ and $j\neq d$.
We say that  $M \in \Mcc$ has a \emph{$d$-linear injective resolution}
if $\mu_{i,j-i}(M)=0$ for all $i$ and $j\neq d$.
(See Section \ref{sect Cartan complex} for reformulations of this definitions.)
The \emph{complexity} of $M$ measures the growth rate of the
Betti numbers of $M$ and is defined as
$$
\cx  M=\inf\{c \in \N: \beta_i(M)\leq \alpha i^{c-1} \fall i\geq 1, \alpha \in\R\}
$$
where
$\beta_i(M)=\sum_{j\in\Z}\beta_{i,j}(M)$ is the $i$-th
total \emph{Betti number} of $M$.

Aramova, Herzog and Hibi \cite{AHH} showed that analogously to the situation
in a polynomial ring Gr\"obner basis theory
can be developed over $E$. Especially generic initial
ideals can be constructed.
In the following
the monomial order considered on $E$ is always the reverse
lexicographic order induced by $e_1>\dots>e_n$. Let
$\ini(J)$ denote the initial ideal and
$\gin(J)$ denote the generic initial ideal of a graded ideal
$J\subseteq E$. For all
results related to generic initial ideals we assume that $|K|=\infty$.
After some definitions and general remarks in Section \ref{sect Cartan complex}
we consider in Section \ref{sect gin} the ideal $\gin(J)$ and
study relations between $E/J$ and $E/\gin(J)$.
In \cite{AHH} it is observed that
$\beta_{i,j}(E/J) \leq \beta_{i,j}(E/\ini(J))$ for all $i,j$.
In Corollary \ref{mu gin} we show that also
$$
\mu_{i,j}(E/J) \leq \mu_{i,j}(E/\ini(J)) \text{ for all } i,j.
$$
Herzog and Terai proved in \cite[Proposition 2.3]{HT}
that $\depth  E/J=\depth E/\gin(J)$ and $\cx  E/J=\cx E/\gin(J).$
These numbers
can be computed in terms of
combinatorial data associated to a generic initial ideal.
More precisely,
let $\supp(u)=\{i\in [n]:e_i|u\}$ and $\max(u)=\max\supp(u)$
for a monomial $u$ of $E$. Similar we define
$\min(u)=\min\supp(u)$.
A direct consequence of
Proposition \ref{cx}  and
Proposition \ref{d(EJ)} is that
\begin{eqnarray*}
\cx  E/\gin(J)&=&\max\{\max(u):u\in G(\gin(J))\},\\
d(E/\gin(J))&=&n-\max\{\min(u):u\in G(\gin(J))\}
\end{eqnarray*}
where
$G(\gin(J))$ denotes the unique minimal set of monomial
generators of $\gin(J)$
and
$d(M)=\max\{i\in \Z:M_i\not=0\}$ for $M \in \Mcc$.
Using the formula $\cx  M +\depth  M=n$ (see \cite[Theorem 3.2]{AAH})
we get also an expression for $\depth  E/\gin(J)$.

In Section \ref{sect depth} we present some results related to $\depth M$.
Let
$H(M,t)=\sum_{i\in\Z}\dim_K M_i t^i$
denote the \emph{Hilbert series} of $M$.
Then depth of $E/J$ where $J\subseteq E$ is a graded ideal and $E/J$ has a linear injective resolution
can be computed as follows.
We show in Theorem \ref{hilbert series}
that if
$|K|=\infty$, $E/J$ has a linear injective resolution and $\depth  E/J=s$,
then there exists a polynomial $Q(t)\in\Z[t]$ with non-negative coefficients
such that
$$
H(E/J,t)=Q(t)\cdot(1+t)^s \text{ and } \ Q(-1)\not=0.
$$
Observe that it is not possible to generalize this equation
in this form to the case of arbitrary quotient rings over $E$.

The $K$-algebra $E$ is injective and
thus $(\cdot)^*=\Hom_E(\cdot,E)$ is an exact functor on $\Mcc$.
By \cite[Proposition 5.2]{AHH} we know that
$\mu_{i,j}(M)=\beta_{i,n-j}(M^*)$ for all $i,j$.
In particular, we see that $M$ has a $d$-linear projective resolution
if and only if $M^*$ has an $(n-d)$-linear injective resolution.
In Theorem \ref{regdual} we observe that
additionally
$$
\depth  M=\depth  M^* \text{ and }
\cx M=\cx  M^*.
$$

Let $\Delta$ be a simplicial complex on $[n]$, i.e.\
$\Delta$ is a set of subsets of $[n]$ and
if $F \subseteq G$ for some  $G\in \Delta$, then
we also have $F \in \Delta$.
The \emph{exterior face ring} of $\Delta$ is $E/J_\Delta$ where
$J_\Delta=(e_F : F \subseteq [n], F\notin \Delta )$.
Then it is easy to see that $E/J_\Delta$ has a linear injective resolution
if and only if $\Delta$ is a Cohen-Macaulay complex.
See Example \ref{ex face ring} for details.
A reformulation and generalization to the matroid case
of \cite[Theorem 1.1]{EPY}
is that the Orlik-Solomon algebra of a matroid
has always a linear injective resolution.
These examples motivate to study
in general modules with linear injective resolutions, which is done in Section \ref{sect linear injective}.

Recall that $\reg M= \max\{j-i : \beta_{i,j}(M)\neq 0\}$ for $0\neq M \in \Mcc$
is the \emph{regularity} of $M$.
We prove in Theorem \ref{reg+depth=d}
that the regularity  of a quotient ring $E/J$ with $d$-linear
injective resolution satisfies
$$
\reg E/J+\depth  E/J=d.
$$
In the remainder of Section \ref{sect linear injective}
we present several technical results related to modules with injective linear
resolutions which we need in Section \ref{sect os algebras}.

In Section \ref{sect os algebras} we investigate homological properties of
Orlik-Solomon algebras of matroids. For convenience of the reader we
start with all necessary matroid notions.
At first we present a compact proof of the
mentioned result of Eisenbud, Popescu and Yuzvinsky (see \cite[Theorem
1.1]{EPY}) that Orlik-Solomon algebras have a linear injective resolution.
We determine the depth and the regularity of an
Orlik-Solomon algebra in Theorem \ref{depth general} and Corollary \ref{reg=l-k}.
More precisely, if $|K|=\infty$ and
$J \subseteq E$ is the Orlik-Solomon ideal of a loopless
matroid on $[n]$ of rank $l$ with $k$ components,
then
$$
\depth  E/J = k \text{ and } \reg E/J =  l-k.
$$
Finally we characterize in Theorem
\ref{characterisation} those matroids whose Orlik-Solomon ideal has a
linear projective resolution: The Orlik-Solomon
ideal $J$ of a matroid has an $m$-linear projective resolution if and only
if the matroid satisfies one of the following three conditions:
\begin{enumerate}
\item The matroid has a loop and $m=0$.
\item The matroid has no loops, but non-trivial parallel classes,
$m=1$ and the matroid is $U_{1,n_1}\oplus\dots\oplus U_{1,n_k}\oplus U_{f,f}$ for some $k,f\geq 0$.
\item The matroid is simple and it is $U_{m,n-f}\oplus U_{f,f}$ for some $0\leq f\leq n$.
 \end{enumerate}
Here $U_{m,n}$ is the \emph{uniform matroid},
whose independent sets are all subsets of $[n]$ with $m$
or less elements. In Theorem  \ref{Betti_extreme_cases}
we give formulas for the total Betti numbers
of these Orlik-Solomon ideals.

We conclude the paper  with
examples of matroids with small rank or small number of elements
to which we apply our results.

\section{Preliminaries}
\label{sect Cartan complex}

In this section we
recall some definitions
and facts about the exterior algebra.
Let $M\in\Mcc$ with minimal graded free resolution
$$
\ldots\longrightarrow F_2\longrightarrow F_1 \longrightarrow F_0\longrightarrow 0.
$$
To distinguish it from the injective resolution of $M$, we
call this resolution also the projective one.
Because of the minimality the $i$-th free module in this resolution is
$F_i=\Dirsum_{j\in\Z}E(-j)^{\beta_{i,j}(M)}$.
We see that the resolution is $d$-linear (as defined in Section \ref{intro})
for some $d \in \Z$
if and only if it is of the form
$$
\ldots\longrightarrow E(-d-2)^{\beta_{2}(M)}\longrightarrow E(-d-1)^{\beta_{1}(M)} \longrightarrow E(-d)^{\beta_{0}(M)}\longrightarrow 0.
$$
This is equivalent to say that if we choose matrices for the maps in the resolution,
then all entries in these matrices are elements in $\mm=(e_1,\dots,e_n)$ of degree 1.

Next we consider for $M$ its minimal graded injective resolution
$$
0\longrightarrow I^0\longrightarrow I^1 \longrightarrow I^2\longrightarrow \ldots.
$$
Since $E$ is injective, we have $I^i=\Dirsum_{j\in\Z}E(n-j)^{\mu_{i,j}(M)}$.
Computing $\Ext^i_E(K,M)$ via the latter resolution shows that this resolution is $d$-linear if and only if it is of the form
$$
0\longrightarrow E(n-d)^{\mu_0(M)}\longrightarrow E(n-d+1)^{\mu_1(M)}\longrightarrow E(n-d+2)^{\mu_2(M)}\longrightarrow\ldots
$$
In particular, in this case $d(M)=\max\{i:M_i\not=0\}=d$
because the socle $0:_M \mm$ of $M$
is isomorphic to the socle of $E(n-d)^{\mu_0(M)}$ which lives
in degree $d$.

Let $M^*=\Hom_E(M,E)$. We call $M^*$ the \emph{dual} of $M$.
Note that the dual of a (minimal) graded projective resolution of $M$ is a
(minimal) graded injective resolution of $M^*$.

For a $K$-vector space $W$ let $W^\vee=\Hom_K(W,K)$ be the
$K$-dual of $W$. In  \cite[Proposition 5.1]{AHH} it was observed that
$
(M^*)_i\cong (M_{n-i})^\vee$ as $K$-vector
spaces.

A very useful complex over $E$ is the Cartan complex
which plays a similar role as the Koszul complex for the polynomial ring.
It is defined as follows. For a sequence
$\vb=v_1,\ldots,v_m\in E_1$  let
$C\mlpnt(\vb;E)=C\mlpnt(v_1,\ldots,v_m;E)$ be the free divided power
algebra $E\langle x_1,\ldots,x_m\rangle$. It is generated by the
divided powers $x_i^{(j)}$ for $i=1,\ldots,m$ and $j\geq 0$ which
satisfy the relations $x_i^{j}x_i^{k}=((j+k)!/(j!k!))x_i^{j+k}$.
Thus $C_i(\vb;E)$ is a free $E$-module with basis
$x^{(a)}=x_1^{(a_1)}\cdots x_m^{(a_m)}$, $a\in \N^m$, $|a|=i$. The
$E$-linear differential on $C\mlpnt(v_1,\ldots,v_m;E)$ is
$$\partial_i : C_i(v_1,\ldots,v_m;E)\longrightarrow C_{i-1}(v_1,\ldots,v_m;E), \qquad x^{(a)}\mapsto \sum_{a_j>0}v_jx_1^{(a_1)}\cdots x_j^{(a_j-1)}\cdots x_m^{(a_m)}.$$
One easily sees that $\partial\circ\partial=0$ so this is indeed a complex.

\begin{defn} Let $M \in \Mcc$.
The complexes
$$
C\mlpnt(\vb;M)=C\mlpnt(\vb;E)\tensor_EM \quad \text{ and } \quad C^{\lpnt}(\vb;M)=\Hom_E(C\mlpnt(\vb;E),M)
$$
are called the \emph{Cartan complex} and \emph{Cartan cocomplex} of
$\vb$ with values in $M$. The corresponding homology modules
$$
H_i(\vb;M)=H_i(C\mlpnt(\vb;M)) \quad \text{ and } \quad H^i(\vb;M)=H^i(C^{\pnt}(\vb;M))
$$
are called the \emph{Cartan homology} and \emph{Cartan cohomology}
of $\vb$ with values in $M$.
\end{defn}

The elements of $C^{i}(\vb;M)$ can be identified with homogeneous
polynomials $\sum m_ay^a$ with $m_a\in M$, $a\in \N^m$, $|a|=i$ and
$$
\sum m_ay^a(x^{(b)})=\begin{cases} m_a &  \text{if }a=b,\\
                                    0   & \text{if }a\not=b.\\
                      \end{cases}
$$
In particular, $C^{\pnt}(\vb;E)$ is just the
polynomial ring $S=K[y_1,\ldots,y_m]$. After this identifications
the differential on $C^{\pnt}(\vb;M)$ is simply the multiplication
with $\sum_{i=1}^mv_iy_i$.

Setting $\deg x_i=1$ and $\deg y_i=-1$ induces a grading on the
complexes and their homologies. Cartan homology and Cartan
cohomology are related as follows:

\begin{prop}
\label{co and hom}\cite[Proposition 4.2]{AH} Let $M\in\Mcc$ and
$\vb=v_1,\ldots,v_m \in E_1$. Then
$$
H_i(\vb;M)^*\cong H^i(\vb;M^*) \text{ as graded } E\text{-modules } \fall \ i\in \N.
$$
\end{prop}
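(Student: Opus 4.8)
The plan is to exploit the fact, recorded in the introduction, that $E$ is self-injective, so that $(-)^* = \Hom_E(-,E)$ is an exact contravariant functor on $\Mcc$. An exact contravariant functor carries homology to cohomology, so the statement splits into two largely independent tasks: first, the general principle that $H_i(D_{\bullet})^* \cong H^i(D_{\bullet}^*)$ for any complex $D_{\bullet}$ in $\Mcc$; and second, the concrete identification of the dualized Cartan complex $C_{\bullet}(\vb;M)^*$ with the Cartan cocomplex $C^{\bullet}(\vb;M^*)$. Taking $D_{\bullet}=C_{\bullet}(\vb;M)$ and combining the two yields the claim.

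For the first task I would argue purely formally. Writing $Z_i=\ker\partial_i$ and $B_i=\operatorname{im}\partial_{i+1}$, the module $H_i(D_{\bullet})$ is assembled from the short exact sequences $0\to Z_i\to D_i\to D_i/Z_i\to 0$ (with $D_i/Z_i\cong B_{i-1}$) and $0\to B_i\to Z_i\to H_i(D_{\bullet})\to 0$. Applying $(-)^*$ reverses each arrow and, by exactness, preserves exactness; a short diagram chase then identifies $H_i(D_{\bullet})^*$ with the $i$-th cohomology of the dual cochain complex $(D_i^*,\partial_{i+1}^*)$. This is the standard assertion that an exact contravariant functor commutes with taking (co)homology, and it is exactly where the self-injectivity of $E$ is used.

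The second task is the actual computation, which I would carry out on bases in order to keep the grading and signs under control. Since $C_i(\vb;E)$ is a finitely generated free $E$-module on the divided powers $x^{(a)}$, $|a|=i$, freeness together with the hom-tensor adjunction gives, for each $i$, natural isomorphisms
$$
C_i(\vb;M)^* = \Hom_E\!\big(C_i(\vb;E)\otimes_E M,\, E\big) \cong \Hom_E\!\big(C_i(\vb;E),\, \Hom_E(M,E)\big) = C^i(\vb;M^*),
$$
both sides being canonically $\bigoplus_{|a|=i}M^*$ via the dual basis. Using $(M^*)_j\cong (M_{n-j})^\vee$ and $\deg x^{(a)}=-\deg y^{(a)}=i$, one checks that this is an isomorphism of graded $E$-modules. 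It then remains to verify that, under this identification, the dual $\partial_i^*$ of the Cartan differential becomes the cocomplex differential, namely multiplication by $\sum_j v_j y_j$. This I would do by evaluating on the dual basis: applying $\partial_i^*$ to the functional dual to $x^{(a)}$ and comparing with the explicit formula $\partial_i(x^{(b)})=\sum_{b_j>0}v_j x^{(b-\epsilon_j)}$ reproduces precisely the terms of $\sum_j v_j y_j$.

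The main obstacle is not conceptual but bookkeeping. In the super-commutative setting of $\Mcc$ the factor $v_j\in E_1$ must be moved past elements of $M$ and $M^*$ using the relation $am=(-1)^{\deg a\deg m}ma$, and the dual basis of the divided-power basis carries its own signs. Matching the dual Cartan differential with multiplication by $\sum_j v_j y_j$ on the nose, rather than merely up to an overall sign, and keeping the left and right $E$-module structures consistent throughout, is the delicate point of the argument.
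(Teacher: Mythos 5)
The paper gives no proof of this proposition at all, importing it verbatim from \cite[Proposition 4.2]{AH}; your argument --- exactness of $(\cdot)^*=\Hom_E(\cdot,E)$ from the self-injectivity of $E$ (so dualizing commutes with passing to homology), combined with the hom-tensor adjunction $\Hom_E\bigl(C_i(\vb;E)\otimes_EM,E\bigr)\cong\Hom_E\bigl(C_i(\vb;E),M^*\bigr)$ identifying the dualized Cartan complex with the Cartan cocomplex of $M^*$, with the grading matched via $(M^*)_j\cong(M_{n-j})^\vee$ --- is correct and is essentially the standard argument of the cited source. Your dual-basis verification that $\partial_i^*$ becomes multiplication by $\sum_j v_jy_j$ is the right final check, and the sign bookkeeping you flag as delicate is in fact harmless, since an overall sign on a differential does not change (co)homology.
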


Cartan (co)homology can be used inductively as there are long exact
sequences connecting the (co)homologies of $v_1,\ldots,v_j$ and
$v_1,\ldots,v_{j+1}$.

\begin{prop}
\label{long cartan seq}\cite[Propositions 4.1, 4.3]{AH}
Let
$M\in\Mcc$ and $\vb=v_1,\ldots,v_m \in E_1$. For all $j=1,\ldots,m$
there exist long exact sequences of graded $E$-modules
$$
\ldots\longrightarrow H_i(v_1,\ldots,v_j;M)\longrightarrow H_i(v_1,\ldots,v_{j+1};M)\longrightarrow H_{i-1}(v_1,\ldots,v_{j+1};M)(-1)
$$
$$
\longrightarrow H_{i-1}(v_1,\ldots,v_j;M)\longrightarrow H_{i-1}(v_1,\ldots,v_{j+1};M)\longrightarrow \ldots
$$
and
$$ \ldots \longrightarrow H^{i-1}(v_1,\ldots,v_{j+1};M)\longrightarrow H^{i-1}(v_1,\ldots,v_j;M)\longrightarrow H^{i-1}(v_1,\ldots,v_{j+1};M)(+1)$$
$$\stackrel{\cdot y_{j+1}}\longrightarrow H^i(v_1,\ldots,v_{j+1};M)\longrightarrow H^i(v_1,\ldots,v_j;M)\longrightarrow\ldots
$$
\end{prop}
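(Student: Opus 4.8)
The plan is to derive both long exact sequences from a single short exact sequence of Cartan complexes obtained by adjoining the last divided power variable $x_{j+1}$. Fix $j$ and abbreviate $C\mlpnt=C\mlpnt(v_1,\dots,v_j;M)$ and $C'\mlpnt=C\mlpnt(v_1,\dots,v_{j+1};M)$. Since the underlying divided power algebra of $C'\mlpnt$ is $E\langle x_1,\dots,x_{j+1}\rangle$, every basis element of $C'_i$ can be written uniquely as $x^{(a)}x_{j+1}^{(k)}$ with $x^{(a)}$ a basis element of $C\mlpnt$ and $|a|+k=i$. First I would record the resulting decomposition $C'_i=\Dirsum_{k\geq 0}C_{i-k}\cdot x_{j+1}^{(k)}$, together with the formula $\partial(c\,x_{j+1}^{(k)})=(\partial c)\,x_{j+1}^{(k)}+v_{j+1}\,c\,x_{j+1}^{(k-1)}$ for the differential, read off directly from the definition in the text (note that the divided power convention produces no extra signs).

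Next I would isolate the subcomplex given by the terms with $k=0$. The displayed formula shows that $c\mapsto c\,x_{j+1}^{(0)}$ is a chain map whose image is a subcomplex isomorphic to $C\mlpnt$, because the $v_{j+1}$-term drops out when $k=0$. The quotient consists of the terms with $k\geq 1$, and the assignment $\pi(c\,x_{j+1}^{(k)})=c\,x_{j+1}^{(k-1)}$ (with $\pi$ killing the $k=0$ part) is again a chain map onto $C'\mlpnt$; a direct check using the same differential formula confirms $\pi\partial=\partial\pi$ both for $k\geq 2$ and in the boundary case $k=1$. Since $\pi$ lowers the homological degree by one and, because $\deg x_{j+1}=1$, also lowers the internal degree by one, the quotient is $C'\mlpnt(-1)$ with an additional homological shift by $-1$. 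This yields the short exact sequence of complexes
$$
0\longrightarrow C\mlpnt(v_1,\dots,v_j;M)\longrightarrow C\mlpnt(v_1,\dots,v_{j+1};M)\stackrel{\pi}{\longrightarrow} C\mlpnt(v_1,\dots,v_{j+1};M)(-1)[-1]\longrightarrow 0,
$$
whose associated long exact homology sequence, using $H_i(C'\mlpnt(-1)[-1])\cong H_{i-1}(v_1,\dots,v_{j+1};M)(-1)$, is exactly the first sequence claimed.

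For the cohomology sequence I would dualize. The cleanest route is to run the construction above with coefficients in $E$; since every $C_i$ is $E$-free the sequence is split in each homological degree, so applying the exact functor $\Hom_E(-,M)$ produces a short exact sequence of cocomplexes whose long exact cohomology sequence is the second assertion, with the internal shift $(-1)$ turning into $(+1)$ and the reversal of arrows matching the stated pattern. Alternatively, one can apply $(\cdot)^*$ to the homology sequence for $M^*$ and invoke Proposition \ref{co and hom} together with $M^{**}\cong M$ and the exactness of $(\cdot)^*$. The step that requires genuine care — and the one I expect to be the main obstacle — is the precise identification of the connecting homomorphism with multiplication by $y_{j+1}$: under the identification of $C^{\lpnt}(\vb;M)$ with polynomials $\sum m_a y^a$ whose differential is multiplication by $\sum_i v_iy_i$, the boundary map arising from the extra variable $x_{j+1}$ is the adjoint of the inclusion above and must be tracked through the homological and internal degree shifts so that it becomes exactly $\cdot\,y_{j+1}$, with all signs consistent.
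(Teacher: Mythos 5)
Your proof is correct and follows essentially the same route as the source this proposition is quoted from: the paper itself gives no proof, citing \cite[Propositions 4.1, 4.3]{AH}, and the argument there rests on exactly your degreewise split short exact sequence of complexes $0\to C\mlpnt(v_1,\dots,v_j;M)\to C\mlpnt(v_1,\dots,v_{j+1};M)\to C\mlpnt(v_1,\dots,v_{j+1};M)(-1)[-1]\to 0$ obtained from the decomposition $C'_i=\Dirsum_{k\geq 0}C_{i-k}\,x_{j+1}^{(k)}$, together with its $\Hom_E(-,M)$-dual for cohomology. The one step you flag as the main obstacle is in fact immediate: under the identification of $C^{\lpnt}(\vb;M)$ with polynomials $\sum m_ay^a$, the map dual to the quotient map $\pi$ (not to the inclusion, as you write) is literally multiplication by $y_{j+1}$, so $\cdot\,y_{j+1}$ occurs as one of the three structural maps of the dualized short exact sequence of cocomplexes rather than as the connecting homomorphism (that role is played by $H^{i-1}(v_1,\dots,v_j;M)\to H^{i-1}(v_1,\dots,v_{j+1};M)(+1)$), and no further sign-tracking is needed.
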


It is well-known that the Cartan complex $C\mlpnt(v_1,\ldots,v_m;E)$
with values in $E$ is exact and hence it is the minimal graded free
resolution of $E/(v_1,\ldots,v_m)$ over $E$. Thus it can be used to
compute $\Tor_i^E(E/(v_1,\ldots,v_m),\cdot)$ and
$\Ext^i_E(E/(v_1,\ldots,v_m),\cdot)$:

\begin{prop}\cite[Theorem 2.2]{AHH}\label{TorExt and Cartan}
Let $M\in\Mcc$ and $\vb=v_1,\ldots,v_m\in E_1$. There are
isomorphisms of graded $E$-modules
$$
\Tor_i^E(E/(v_1,\ldots,v_m),M)\cong H_i(\vb;M), \qquad \Ext^i_E(E/(v_1,\ldots,v_m),M)\cong H^i(\vb;M).
$$
\end{prop}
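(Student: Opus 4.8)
The plan is to reduce the statement to the standard computation of derived functors from a projective resolution, using the fact recalled just above that the Cartan complex $C_\bullet(\vb;E)$ is exact and hence a free resolution of $E/(v_1,\ldots,v_m)$ over $E$. Once this input is in hand, both isomorphisms are essentially formal, and the bulk of the argument is bookkeeping about the grading and the $E$-module structure.

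First I would note that each $C_i(\vb;E)$ is a free $E$-module with basis the divided powers $x^{(a)}$ with $|a|=i$, so $C_\bullet(\vb;E)$ is a complex of free (hence projective) $E$-modules, and by the stated exactness in positive degrees together with $H_0(\vb;E)=E/(v_1,\ldots,v_m)$ it is a projective resolution of $E/(v_1,\ldots,v_m)$. Therefore $\Tor_i^E(E/(v_1,\ldots,v_m),M)$ may be computed as the $i$-th homology of $C_\bullet(\vb;E)\tensor_E M$. By the very definition of the Cartan complex with values in $M$, this tensor product is precisely $C_\bullet(\vb;M)$, and so its $i$-th homology is $H_i(\vb;M)$. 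This yields the first isomorphism, and one checks that it is an isomorphism of graded $E$-modules because the comparison is induced by the identity on the resolution, while both the differentials and the $E$-action are defined entrywise on the free basis.

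Second, for the $\Ext$ statement I would apply the contravariant functor $\Hom_E(-,M)$ to the same resolution. Because $C_\bullet(\vb;E)$ is a projective resolution of $E/(v_1,\ldots,v_m)$, the cohomology of $\Hom_E(C_\bullet(\vb;E),M)$ computes $\Ext^i_E(E/(v_1,\ldots,v_m),M)$. Again by definition this complex is the Cartan cocomplex $C^\bullet(\vb;M)$, whose $i$-th cohomology is $H^i(\vb;M)$; the degree conventions $\deg x_i=1$ and $\deg y_i=-1$ make this an isomorphism of graded modules.

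The one genuine subtlety to address — and the step I would single out as the main obstacle — is the input itself: that $C_\bullet(\vb;E)$ really is exact in positive homological degrees, so that it resolves $E/(v_1,\ldots,v_m)$. Since $E$ is noncommutative and the objects of $\Mcc$ carry compatible left and right actions, one must also be careful that the identifications $C_\bullet(\vb;E)\tensor_E M\cong C_\bullet(\vb;M)$ and $\Hom_E(C_\bullet(\vb;E),M)\cong C^\bullet(\vb;M)$ are compatible with the sign rule $am=(-1)^{\deg a\deg m}ma$, and that using a resolution of the first argument rather than of $M$ is legitimate, which is the balancing property of $\Tor$ and $\Ext$. Granting the exactness recalled in the text, however, everything else is a direct unwinding of the definitions.
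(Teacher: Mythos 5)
Your proposal is correct and takes essentially the same route as the paper: the paper justifies this proposition (citing \cite{AHH}) precisely by the observation, stated immediately before it, that $C\mlpnt(\vb;E)$ is exact and hence a (minimal graded) free resolution of $E/(v_1,\ldots,v_m)$, so that applying $\cdot\tensor_E M$ and $\Hom_E(\cdot,M)$ yields the Cartan complex and cocomplex with values in $M$, whose (co)homologies compute $\Tor_i^E$ and $\Ext^i_E$. The subtleties you flag (exactness of the Cartan complex, the grading conventions, and compatibility with the sign rule in $\Mcc$) are exactly the inputs the paper takes as given from the preceding discussion.
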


Regularity of a sequence can be detected by its Cartan complex:
\begin{prop}\label{reg over cartan}\cite[Remark 3.4]{AAH}
Let $M\in\Mcc$ and $\vb=v_1,\ldots,v_m \in E_1$.
The following statements are equivalent:
\begin{enumerate}
 \item $\vb$ is $M$-regular;
 \item $H_1(\vb;M)=0$;
 \item $H_i(\vb;M)=0$ for $i\geq 1$.
\end{enumerate}
\end{prop}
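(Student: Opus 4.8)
The plan is to argue by induction on the length $m$ of $\vb$, using the long exact sequences of Proposition \ref{long cartan seq} to pass from $v_1,\ldots,v_{m-1}$ to $v_1,\ldots,v_m$. The implication $(3)\Rightarrow(2)$ is immediate, so the real work lies in $(1)\Rightarrow(3)$ and $(2)\Rightarrow(1)$; throughout I assume $M\neq 0$, since the zero module carries no regular sequence while all its Cartan homology vanishes.

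For the base case $m=1$ I would write out $C\mlpnt(v_1;M)$ explicitly: each $C_i(v_1;E)=Ex_1^{(i)}$ is free of rank one with differential $x_1^{(i)}\mapsto v_1x_1^{(i-1)}$, so tensoring with $M$ yields the complex $\cdots\stackrel{v_1}\to M\stackrel{v_1}\to M\stackrel{v_1}\to M\to 0$ in which every map is multiplication by $v_1$. Hence $H_0(v_1;M)=M/v_1M$ and $H_i(v_1;M)=(0:_M v_1)/v_1M$ for all $i\geq 1$, so all three conditions are equivalent to $0:_M v_1=v_1M$, that is, to $v_1$ being $M$-regular (graded Nakayama ensures $M/v_1M\neq 0$ once $M\neq 0$).

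For the inductive step set $N=M/(v_1,\ldots,v_{m-1})M=H_0(v_1,\ldots,v_{m-1};M)$. The crucial local computation is to identify the connecting map $H_0(\vb;M)(-1)\to H_0(v_1,\ldots,v_{m-1};M)$ of Proposition \ref{long cartan seq} with multiplication by $v_m$; using $v_m^2=0$ and $v_jv_m=-v_mv_j$ one checks this map is well defined, has image $v_mN$, and, under the identification $H_0(\vb;M)=N/v_mN$, has kernel $(0:_N v_m)/v_mN$. For $(1)\Rightarrow(3)$ I would feed the inductive hypothesis $H_i(v_1,\ldots,v_{m-1};M)=0$ for $i\geq 1$ into the sequence: for $i\geq 2$ it collapses to $H_i(\vb;M)\cong H_{i-1}(\vb;M)(-1)$, while at $i=1$ it gives $H_1(\vb;M)\cong[(0:_N v_m)/v_mN](-1)$, which vanishes since $v_m$ is $N$-regular; an induction on $i$ then kills all $H_i(\vb;M)$ with $i\geq 1$. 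For $(2)\Rightarrow(1)$ I would run this backwards: from $H_1(\vb;M)=0$ the segment $H_1(\vb;M)(-1)\to H_1(v_1,\ldots,v_{m-1};M)\to H_1(\vb;M)$ forces $H_1(v_1,\ldots,v_{m-1};M)=0$, so by induction $v_1,\ldots,v_{m-1}$ is $M$-regular; the $i=1$ piece then reads $H_1(\vb;M)\cong[(0:_N v_m)/v_mN](-1)$, whose vanishing yields $0:_N v_m=v_mN$, i.e. $v_m$ is $N$-regular, and graded Nakayama gives $N/v_mN\neq 0$.

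The main obstacle I anticipate is the bookkeeping around the connecting homomorphism: the whole argument hinges on recognizing that boundary map as multiplication by $v_m$ and on the resulting isomorphism $H_1(\vb;M)\cong[(0:_N v_m)/v_mN](-1)$. Everything else is formal diagram chasing through Proposition \ref{long cartan seq}, but this identification is where the exterior-algebra signs ($v_m^2=0$ and anticommutativity) must be handled with care, and it is precisely what ties Cartan homology to the algebraic regularity condition $0:_N v_m=v_mN$.
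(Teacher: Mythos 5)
Your proof is correct, and there is no internal proof to diverge from: the paper quotes this proposition from \cite{AAH} without argument. Your induction on $m$ through the long exact sequences of Proposition \ref{long cartan seq} — with the connecting homomorphism identified as multiplication by $v_m$, giving $H_1(\vb;M)\cong\bigl[(0:_N v_m)/v_mN\bigr](-1)$ once $H_1(v_1,\ldots,v_{m-1};M)=0$ — is exactly the standard argument, and that key identification is the same bookkeeping the authors themselves later codify as the modules $\tilde{H}_0(k)$ in Section \ref{sect depth}; your explicit caveat $M\neq 0$ (needed since all Cartan homology of the zero module vanishes while it carries no regular sequence) is also well placed.
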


In particular, permutations of regular sequences are regular
sequences because the vanishing of the first Cartan homology does
not depend on the order of the elements as one easily sees using
Proposition \ref{long cartan seq}.

\section{Initial and generic initial ideals}\label{sect gin}

In this section we describe some properties of generic initial
ideals and stable ideals. The existence of the generic initial ideal
$\gin (J)$ of a graded ideal $J$ in the exterior algebra over an
infinite field is proved by Aramova, Herzog and Hibi in
\cite[Theorem 1.6]{AHH}, analogously to the case of ideals in the
polynomial ring. (See, e.g., also \cite[Chapter 5]{Gr} or \cite{NRV} for related results.)

A monomial ideal $J\subseteq E$ is called \emph{stable} if
$e_j\frac{u}{e_{\max(u)}}\in J$ for every monomial $u\in J$ and
$j<\max(u)$. The ideal $J$ is called \emph{strongly stable} if
$e_j\frac{u}{e_i}\in J$ for every monomial $u\in J$, $i\in\supp(u)$
and $j<i$.

The generic initial ideal $\gin (J)$ of a graded ideal $J$ is
strongly stable if it exists (see, e.g., \cite[Proposition
1.7]{AHH}). This is independent of the characteristic of $K$ in
contrast to ideals in a polynomial ring. By (the proof of)
\cite[Lemma 1.1]{HT} we have:
\begin{lem}
\label{dual gin} Let $|K|=\infty$ and $J\subseteq E$ be a graded
ideal in $E$. Then
$$
\ini((E/J)^*)\cong (E/\ini(J))^*
$$
as graded $E$-modules, where $(E/J)^*$ is identified with the ideal
$0:_EJ$. In particular,
$$
\gin((E/J)^*)\cong (E/\gin(J))^*.
$$
\end{lem}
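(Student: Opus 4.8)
The plan is to reformulate everything in terms of the ideals $0:_E J$ and $0:_E\ini(J)$ together with a perfect pairing on $E$. Applying the exact functor $(\cdot)^*$ to $0\to J\to E\to E/J\to 0$ and using $E^*\cong E$ identifies $(E/J)^*$ with the ideal $0:_E J$, and likewise $(E/\ini(J))^*$ with $0:_E\ini(J)$; since $\ini(J)$ is a monomial ideal, the latter is again a monomial ideal. Thus the first assertion is equivalent to the equality of ideals $\ini(0:_E J)=0:_E\ini(J)$. To treat both annihilators uniformly I would introduce the graded perfect pairing $\langle a,b\rangle\in K$, defined as the coefficient of $e_{[n]}=e_1\wedge\cdots\wedge e_n$ in the product $ab$. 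It is nondegenerate, restricts to a perfect pairing $E_i\times E_{n-i}\to K$, and satisfies $\langle ab,c\rangle=\langle a,bc\rangle$ since both are the coefficient of $e_{[n]}$ in $abc$. Using that any ideal $I$ satisfies $IE=I$ and that the pairing is perfect, a short computation gives $0:_E I=I^\perp:=\{x\in E:\langle x,f\rangle=0\ \text{for all}\ f\in I\}$. Hence the goal becomes $\ini(J^\perp)=(\ini(J))^\perp$.

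Both sides are graded, and in degree $i$ each has dimension $\binom{n}{i}-\dim_K J_{n-i}$: for the left side because $\ini$ preserves Hilbert functions and $(J^\perp)_i=(J_{n-i})^\perp$ inside $E_i$, and for the right side because $\ini$ also preserves the Hilbert function of $J$. It therefore suffices to prove the single inclusion $\ini(J^\perp)\subseteq(\ini(J))^\perp$. So let $x\in J^\perp$ be homogeneous of degree $i$ with leading monomial $\ini(x)=e_T$, and suppose for contradiction that $e_T\notin(\ini(J))^\perp$, i.e.\ $e_{[n]\setminus T}\in\ini(J)$. Choose $f\in J$ homogeneous of degree $n-i$ with $\ini(f)=e_{[n]\setminus T}$. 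Writing $x=\sum_{|S|=i}c_Se_S$ and $f=\sum_{|U|=n-i}d_Ue_U$, and noting that $\langle e_S,e_U\rangle\neq 0$ only for $U=[n]\setminus S$, the relation $x\in J^\perp$ gives
$$0=\langle x,f\rangle=\sum_{|S|=i}\pm\,c_S\,d_{[n]\setminus S}.$$
The term $S=T$ contributes $\pm c_T\,d_{[n]\setminus T}\neq 0$, so some $S\neq T$ with $c_S\neq 0$ and $d_{[n]\setminus S}\neq 0$ must occur as well; for such $S$ we have $e_S<e_T$ (as $e_T$ is the leading monomial of $x$) and $e_{[n]\setminus S}<e_{[n]\setminus T}$ (as $e_{[n]\setminus T}$ is the leading monomial of $f$).

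The crux is then the following duality property of the reverse lexicographic order, which I would prove directly from the identity $S\triangle T=([n]\setminus S)\triangle([n]\setminus T)$ together with the characterization $e_S>e_T\Leftrightarrow\max(S\triangle T)\in T$: for $|S|=|T|$ one has $e_S<e_T$ if and only if $e_{[n]\setminus S}>e_{[n]\setminus T}$. This directly contradicts the two strict inequalities found above, so no cancellation can occur; hence $e_{[n]\setminus T}\notin\ini(J)$, i.e.\ $e_T\in(\ini(J))^\perp$, which proves the inclusion and with it the first statement. I expect this order-duality lemma, and its clean use to exclude cancellation in $\langle x,f\rangle$, to be the main point, the remaining steps being formal. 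Finally, for the \emph{in particular} part I would pass to a generic $g\in\GL_n(K)$. Since $g$ acts by a graded algebra automorphism, $0:_E(gJ)=g\,(0:_E J)$, and for generic $g$ one simultaneously has $\ini(gJ)=\gin(J)$ and $\ini(g\,(E/J)^*)=\gin((E/J)^*)$. Applying the already-proved identity to the ideal $gJ$ then yields $(E/\gin(J))^*=0:_E\ini(gJ)=\ini(0:_E gJ)=\ini(g\,(0:_E J))=\gin((E/J)^*)$.
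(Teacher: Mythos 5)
Your proof is correct, and it in fact supplies an argument the paper omits: the paper derives this lemma solely by citing (the proof of) \cite[Lemma 1.1]{HT}, and your route --- identifying $0:_EJ$ with $J^\perp$ under the pairing $\langle a,b\rangle=$ coefficient of $e_{[n]}$ in $ab$, reducing to one inclusion by the Hilbert-function invariance of $\ini$, and excluding cancellation in $\langle x,f\rangle$ via the order-reversal $e_S<e_T\Leftrightarrow e_{[n]\setminus S}>e_{[n]\setminus T}$ of revlex under complementation --- is essentially the Herzog--Terai argument being invoked. The passage to $\gin$ via a $g\in\GL_n(K)$ generic for both $J$ and $0:_EJ$ simultaneously (possible as $|K|=\infty$, since each generic condition holds on a dense open set) is also handled correctly.
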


With this result we can compare the Bass numbers of a graded ideal
with the Bass numbers of its initial ideal because we already know
$\beta_{i,j}(E/J)\leq\beta_{i,j}(E/\ini(J))$ for all $i,j$ by
\cite[Proposition 1.8]{AHH}.

\begin{cor}\label{mu gin}
Let $J\subseteq E$ be a graded ideal. Then
$$
\mu_{ij}(E/J)\leq\mu_{ij}(E/\ini(J)) \text{ for all } i,j.
$$
\end{cor}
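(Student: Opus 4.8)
The plan is to reduce this statement about Bass numbers to the already-known inequality for Betti numbers by passing to the dual module. The two tools I would combine are the duality formula $\mu_{i,j}(M)=\beta_{i,n-j}(M^*)$ from \cite[Proposition 5.2]{AHH}, which converts Bass numbers into Betti numbers of duals, and Lemma \ref{dual gin}, which identifies the initial module of $(E/J)^*$.

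First I would rewrite both sides of the desired inequality in terms of Betti numbers of duals. Applying the duality formula to $M=E/J$ and to $M=E/\ini(J)$ gives
$$
\mu_{ij}(E/J)=\beta_{i,n-j}\big((E/J)^*\big) \quad\text{and}\quad \mu_{ij}(E/\ini(J))=\beta_{i,n-j}\big((E/\ini(J))^*\big).
$$
By Lemma \ref{dual gin} the module $(E/\ini(J))^*$ is isomorphic to $\ini((E/J)^*)$ as a graded $E$-module, so it suffices to prove
$$
\beta_{i,n-j}\big((E/J)^*\big)\leq \beta_{i,n-j}\big(\ini((E/J)^*)\big) \text{ for all } i,j;
$$
that is, the graded Betti numbers of the module $(E/J)^*$ are bounded above by those of its initial module.

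To obtain this last inequality I would invoke the graded version of \cite[Proposition 1.8]{AHH}. Since $(E/J)^*$ is identified in Lemma \ref{dual gin} with the graded ideal $I=0:_E J$, I would apply the known bound $\beta_{i,j}(E/I)\leq\beta_{i,j}(E/\ini(I))$ for quotient rings to $E/I$, and then translate it to $I$ itself by means of the short exact sequence $0\to I\to E\to E/I\to 0$. Since $E$ is free, this sequence gives $\beta_{i,j}(I)=\beta_{i+1,j}(E/I)$ (the resolution of $I$ is the resolution of $E/I$ with its free $F_0=E$ removed), and the same shift applied to the monomial ideal $\ini(I)$ gives $\beta_{i,j}(\ini(I))=\beta_{i+1,j}(E/\ini(I))$. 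Chaining these three relations yields $\beta_{i,j}(I)\leq\beta_{i,j}(\ini(I))$, which is exactly the required bound for $(E/J)^*$; reading it back through the duality formula completes the proof.

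The step that deserves the most care is the degree bookkeeping. One must verify that the grading under which $(E/J)^*$ is identified with $0:_E J$ in Lemma \ref{dual gin} is compatible with the grading in the duality formula $\mu_{i,j}(M)=\beta_{i,n-j}(M^*)$ and with the piecewise identification $(M^*)_i\cong (M_{n-i})^\vee$ of \cite[Proposition 5.1]{AHH}, so that the index $n-j$ lines up consistently on both sides. Once these conventions are matched, the inequality follows directly from the quotient-ring bound; the genuine content lies entirely in the duality, which is already available from the cited results.
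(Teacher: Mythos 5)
Your proposal is correct and follows essentially the same route as the paper's own proof: both convert Bass numbers into Betti numbers of duals via $\mu_{i,j}(M)=\beta_{i,n-j}(M^*)$, apply the known bound for Betti numbers of initial modules to the ideal $(E/J)^*\cong 0:_EJ$, and then identify $\ini((E/J)^*)\cong (E/\ini(J))^*$ by Lemma \ref{dual gin}. The only difference is that you make explicit the transfer of the inequality from quotient rings to ideals via the shift $\beta_{i,j}(I)=\beta_{i+1,j}(E/I)$, a step the paper leaves implicit.
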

\begin{proof}
It follows from the inequalities
$$\beta_{i,j}(E/J)\leq\beta_{i,j}(E/\ini(J))$$
and Lemma \ref{dual gin} that
\begin{eqnarray*}
 \mu_{i,j}(E/J)&=&   \beta_{i,n-j}((E/J)^*)\\
               &\leq&\beta_{i,n-j}(\ini((E/J)^*))\\
               &=&   \beta_{i,n-j}((E/\ini (J))^*)\\
               &=&   \mu_{i,j}(E/\ini(J)).
\end{eqnarray*}
\end{proof}

In the following we collect some results on (strongly) stable
ideals. They are inspired by the chapter on squarefree strongly
stable ideals in the polynomial ring in \cite{HS}. Let $G(J)$ be the
unique minimal system of monomials generators of a monomial ideal
$J$.

Aramova, Herzog and Hibi \cite{AHH} computed  a formula for the
graded Betti numbers of stable ideals:

\begin{lem}\cite[Corollary 3.3]{AHH}
\label{betti}
Let $0\neq J\subset E$ be a stable ideal. Then
$$
\beta_{i,i+j}(J)=\sum_{u\in G(J)_j}\binom{\max(u)+i-1}{\max(u)-1} \ \ \fall i\geq 0, j\in\Z.
$$
\end{lem}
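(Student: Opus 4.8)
The plan is to reduce the computation to $E/J$ and to build its minimal graded free resolution by an iterated mapping cone, imitating the Eliahou--Kervaire construction over the polynomial ring. Since $E$ is free, the exact sequence $0\to J\to E\to E/J\to 0$ gives $\beta_{i,i+j}(J)=\beta_{i+1,i+j}(E/J)$ for all $i\geq 0$, so it suffices to account for the graded Betti numbers of $E/J$. I would order the minimal generators $G(J)=\{u_1,\dots,u_r\}$ so that $\max(u_1)\leq\dots\leq\max(u_r)$ and set $J_t=(u_1,\dots,u_t)$; each $J_t$ is again stable. The crux of the ordering is the colon computation
$$
(J_{t-1}:u_t)=(e_1,\dots,e_{\max(u_t)}).
$$
For $\supseteq$ one notes that $e_{\max(u_t)}u_t=0$ since $e_{\max(u_t)}^2=0$, while for $j<\max(u_t)$ with $j\notin\supp(u_t)$ stability gives $w:=e_j\,u_t/e_{\max(u_t)}\in J$ with $\max(w)<\max(u_t)$; by the chosen ordering some earlier generator divides $w$, so $w\in J_{t-1}$ and $e_j u_t=\pm e_{\max(u_t)}w\in J_{t-1}$. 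For $\subseteq$, if a monomial $v$ with $\supp(v)\subseteq\{\max(u_t)+1,\dots,n\}$ satisfied $vu_t\in J_{t-1}$, then some $u_s$ ($s<t$) would divide $v\wedge u_t$; comparing supports and using $\max(u_s)\leq\max(u_t)<\min(\supp v)$ forces $u_s\mid u_t$, hence $u_s=u_t$ by minimality, a contradiction.

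Writing $m=\max(u_t)$, the quotient $E/(e_1,\dots,e_m)$ is resolved minimally by the Cartan complex $C\mlpnt(e_1,\dots,e_m;E)$, as recalled before Proposition \ref{TorExt and Cartan}; its term in homological degree $i$ is free of rank $\binom{m+i-1}{m-1}$ generated in internal degree $i$, this being the number of divided powers $x^{(a)}$ with $|a|=i$ in $m$ variables. Feeding the colon into the short exact sequence
$$
0\to \bigl(E/(J_{t-1}:u_t)\bigr)(-\deg u_t)\stackrel{\cdot u_t}{\longrightarrow} E/J_{t-1}\to E/J_t\to 0
$$
and resolving by a mapping cone, the contribution of $u_t$ to the resolution of $E/J_t$ is the shifted Cartan complex, placing $E(-i-\deg u_t)^{\binom{m+i-1}{m-1}}$ in homological degree $i+1$. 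The base case $t=1$ is consistent: stability forces the generator of smallest $\max$-index to be $e_1\cdots e_{m_1}$, whose annihilator is exactly $(e_1,\dots,e_{m_1})$.

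The main obstacle is to show that this iterated mapping cone is \emph{minimal}, i.e.\ that the comparison maps lifting multiplication by $u_t$ have all entries in $\mm$, so that no consecutive cancellation occurs and the ranks above are genuine Betti numbers. I would establish this by choosing the comparison maps explicitly from the Cartan differentials together with the boundary operators $\partial$, and verifying that every entry is a monomial of positive degree; the degree shift $-\deg u_t$ combined with the colon being exactly $(e_1,\dots,e_m)$ is what rules out the appearance of a unit. Granting minimality, summing the contributions of all generators of degree $j$ yields
$$
\beta_{i+1,i+j}(E/J)=\sum_{u\in G(J)_j}\binom{\max(u)+i-1}{\max(u)-1},
$$
which by the first reduction equals $\beta_{i,i+j}(J)$, as claimed.
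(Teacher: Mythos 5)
The statement is quoted in the paper without proof (it is cited from [AHH, Corollary 3.3], where it is obtained by computing an explicit basis of the Cartan homology $H_i(e_1,\ldots,e_n;E/J)$ for stable ideals), so your Eliahou--Kervaire-style mapping-cone argument is a genuinely different route. Most of it checks out: the shift $\beta_{i,i+j}(J)=\beta_{i+1,i+j}(E/J)$ is correct since $J\subseteq\mm$; your colon computation $(J_{t-1}:u_t)=(e_1,\ldots,e_{\max(u_t)})$ is correct under the max-nondecreasing ordering (note the pleasant difference from the polynomial ring, where $e_{\max(u_t)}$ itself would be missing), and the rank and degree bookkeeping through the Cartan resolution of $E/(e_1,\ldots,e_m)$ gives exactly the claimed binomials.

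The genuine gap is the minimality of the iterated mapping cone, which you acknowledge but whose proposed justification does not work as stated. An entry of the comparison map $\phi_i$ from the Cartan strand of $u_t$ into the strand of an earlier generator $u_s$ has degree $\deg u_t-\deg u_s+1$, and your ordering by $\max$ does \emph{not} make the degrees nondecreasing: for the stable ideal with $G(J)=\{e_1e_2,\,e_1e_3,\,e_1e_4,\,e_2e_3e_4,\,e_1e_5\}$ the generator $e_2e_3e_4$ (degree $3$, $\max=4$) precedes $e_1e_5$ (degree $2$, $\max=5$), so the relevant entries have degree $0$ and a unit is not excluded by degree reasons -- precisely the situation your sketch claims is ruled out. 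To close the gap you must either construct an explicit Eliahou--Kervaire-type chain map (via a decomposition function) and check its entries lie in $\mm$, or, more cheaply, reorder: take degrees nondecreasing first and break ties by $\max$. Then every entry of every comparison map has degree $\deg u_t-\deg u_s+1\geq 1$ and minimality is automatic; the price is that the inclusion $(J_{t-1}:u_t)\subseteq(e_1,\ldots,e_m)$ needs a new argument, since an earlier $u_s$ of smaller degree may have $\max(u_s)>m$ and divide $v\wedge u_t$ using variables of $v$. This case is excluded by stability: repeatedly swapping the elements of $\supp(u_s)\setminus\supp(u_t)$ (all $>m$) down into $\supp(u_t)\setminus\supp(u_s)$, which is possible as $\deg u_s\leq\deg u_t$, produces a monomial of $J$ properly dividing $u_t$, contradicting $u_t\in G(J)$. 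With that repair your proof is complete and self-contained, whereas the cited proof of [AHH] sidesteps cone minimality altogether by computing Cartan homology classes directly.
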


In particular, if $J$ is stable and generated in one degree, it has
a linear projective resolution. An example for such an ideal is the
maximal ideal $\mm$ of $E$ and all its powers.

The complexity of a stable ideal $J$ can be interpreted in terms of
$G(J)$.

\begin{prop}
\label{cx}
Let $0\neq J\subset E$ be a stable ideal. Then
$$
\cx  E/J=\max\{\max(u):u\in G(J)\}.
$$
\end{prop}
\begin{proof}
This is evident from the formula for the Betti numbers of stable
ideals since
$$
\beta_i(J)=\sum_{k=1}^n m_k(J)\binom{k+i-1}{k-1}
$$
where $m_k(J)=|\{u\in G(J):\max(u)=k\}|$. The binomial coefficient in
this sum is a polynomial in $i$ of degree $k-1$ and the number
$\max\{\max(u):u\in G(J)\}$ is exactly the maximal $k$
for which  $m_k(J)\not=0$.
\end{proof}

Recall that
$$
d(M)=\max\{i\in \Z:M_i\not=0\}=n-\min\{i\in \Z: (M^*)_i\not=0\}.
$$
Here the second equality results from the isomorphism $(M^*)_i\cong (M_{n-i})^\vee$.
In the case of strongly stable ideals $J$ this number has a meaning in terms of $G(J)$.

\begin{prop}
\label{d(EJ)}
Let $0\neq J\subset E$ be a strongly stable ideal. Then
$$
d(E/J)=n-\max\{\min(u):u\in G(J)\}.
$$
\end{prop}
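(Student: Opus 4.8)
The plan is to reduce the statement to an elementary count of the monomials avoiding $J$, and then to exploit strong stability to pin down the largest such monomial explicitly. Since $J$ is a monomial ideal, for each $t$ the degree-$t$ part $(E/J)_t$ has as $K$-basis the monomials $e_S$ with $|S|=t$ and $e_S\notin J$, and $e_S\notin J$ holds exactly when $\supp(u)\not\subseteq S$ for every $u\in G(J)$. Hence $d(E/J)=\max\{|S| : \supp(u)\not\subseteq S \text{ for all } u\in G(J)\}$. Writing $m=\max\{\min(u):u\in G(J)\}$, the goal becomes to show that this maximum equals $n-m$.

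For the inequality $d(E/J)\ge n-m$ I would simply exhibit a witness: take $S_0=\{m+1,\dots,n\}$. Every $u\in G(J)$ satisfies $\min(u)\le m$, so $\min(u)\notin S_0$ and therefore $\supp(u)\not\subseteq S_0$; thus $e_{S_0}\notin J$ and $|S_0|=n-m$. This direction is immediate and uses only the definition of $m$.

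The reverse inequality is where strong stability enters, and I expect it to be the main obstacle. First I would prove that the set of monomials outside $J$ is closed under raising indices: if $e_S\notin J$, $i\in S$, and $j>i$ with $j\notin S$, then $e_{(S\setminus\{i\})\cup\{j\}}\notin J$. Indeed, were $e_{(S\setminus\{i\})\cup\{j\}}\in J$, strong stability applied with the index $j$ and the smaller index $i$ would give $e_i\cdot e_{(S\setminus\{i\})\cup\{j\}}/e_j=\pm e_S\in J$, a contradiction. Using this repeatedly, any size-$t$ monomial outside $J$ can be transported to the componentwise-largest one, $e_{\{n-t+1,\dots,n\}}$, which therefore also lies outside $J$ whenever some degree-$t$ monomial does. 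Finally $e_{\{n-t+1,\dots,n\}}\notin J$ means no generator is supported in $\{n-t+1,\dots,n\}$, i.e.\ no $u\in G(J)$ has $\min(u)\ge n-t+1$; equivalently $m\le n-t$, that is $t\le n-m$. Combining the two bounds yields $d(E/J)=n-m$.

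The only delicate point is the raising/reachability step: I must check that the single-swap raising operation really lets one transport an arbitrary size-$t$ witness to the rightmost set $\{n-t+1,\dots,n\}$, moving the largest occupied index up to $n$ first, then the next, and so on. Alternatively, one could bypass the combinatorics on $E/J$ and argue on the dual via the recalled identity $d(E/J)=n-\min\{i:(E/J)^*_i\neq 0\}$ together with the identification $(E/J)^*=0:_E J$ from Lemma \ref{dual gin}: the claim then becomes that the minimal size of a set meeting $\supp(u)$ for every $u\in G(J)$ equals $m$, with $\{1,\dots,m\}$ serving as a minimal transversal. The combinatorial heart is the same in both routes.
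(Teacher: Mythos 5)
Your proof is correct, and it takes a genuinely different route from the paper's. The paper argues on the dual side: using $d(E/J)=n-\min\{i:(E/J)^*_i\neq 0\}$ and the identification $(E/J)^*\cong 0:_EJ$, it reduces the claim (via the equivalence $J\subseteq(e_{i_1},\dots,e_{i_r})\Leftrightarrow e_{i_1}\cdots e_{i_r}\in 0:_EJ$) to showing that the least $r$ with $J\subseteq(e_{i_1},\dots,e_{i_r})$ equals $s=\max\{\min(u):u\in G(J)\}$, and it proves $r\geq s$ by induction on $n$, passing to the image of $J$ in the exterior algebra on $n-1$ variables modulo $e_s$ and tracking how $\max\{\min(u)\}$ behaves under that quotient. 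You instead argue primally: $d(E/J)$ is the largest size of a set $S$ containing the support of no generator; the witness $\{m+1,\dots,n\}$ gives $d(E/J)\geq n-m$ (this half is essentially the same trivial observation as the paper's ``$\geq$'', which uses $J\subseteq(e_1,\dots,e_s)$); and for the converse you prove an exchange lemma --- strong stability implies the set of monomials outside $J$ is closed under raising an index to a larger unoccupied one --- and transport any top-degree witness to $e_{\{n-t+1,\dots,n\}}$, whose lying outside $J$ forces $t\leq n-m$. Your raising argument is sound: applying the paper's strong stability condition to $u=e_{(S\setminus\{i\})\cup\{j\}}$ with support index $j$ and smaller index $i\notin\supp(u)$ yields $\pm e_S\in J$, the contradiction you want, and your staged transport (largest index to $n$ first, then the next, and so on) always swaps into an unoccupied slot, so the reachability concern you flag does resolve; note the argument genuinely uses strong stability, not mere stability, since after the first stage the raised index is no longer maximal. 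What your route buys is a self-contained combinatorial proof that avoids both the duality identity and the paper's induction on $n$ (which the paper leaves at the level of ``an appropriate induction''); what the paper's route buys is coherence with its surrounding machinery --- the identification $(E/J)^*\cong 0:_EJ$ from Lemma \ref{dual gin} is used repeatedly elsewhere --- and the interpretation of $n-m$ as the minimal degree in the annihilator, i.e.\ the minimal size of a transversal of the supports of $G(J)$, which is precisely the dual reformulation you sketch yourself in your closing paragraph.
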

Observe that the right hand side of the equation
does not change when replacing $G(J)$ by
$J$ because $J$ is strongly stable.
\begin{proof}
Set $s=\max\{\min(u):u\in G(J)\}$. We want to show that
$$s=\min\{i: (E/J)^*_i\not=0\}.$$

As $J\subseteq (e_1,\ldots,e_s)$ we obtain ``$\geq$'' immediately
from the equivalence
$$ J\subseteq (e_{i_1},\ldots, e_{i_r}) \Leftrightarrow e_{i_1}\cdots e_{i_r}\in 0:_EJ\cong(E/J)^*$$
for $i_1,\ldots,i_r\in [n]$.

The other inequality ``$\leq$'' follows if we show that $J\subseteq
(e_{i_1},\ldots, e_{i_r})$  implies $r\geq s$.

First consider the case that $J\subseteq (e_s)$. As $J$ is strongly
stable, $e_i\frac{u}{e_s}\in J$ for all monomials $u\in J$ and all
$i<s$. But $e_i\frac{u}{e_s}\not\in (e_s)$ for $i\not\in\supp(u)$
and thus $i\in\supp(u)$ for all $i<s$. By the definition of $s$
this implies $s=1$ and hence $r\geq s$.

Now assume $J\not\subseteq (e_s)$ and consider the ideal
$\overline{J}=J+(e_s)/(e_s)$. This ideal is again strongly stable in
the exterior algebra $\overline{E}$ in $n-1$ variables
$e_1,\ldots,e_{s-1},e_{s+1},\ldots,e_n$. The position of $e_i$
diminishes by one for $i>s$. We see that
$$\min(\overline{u})=\begin{cases}
           \min(u)   & \text{if }\min(u)<s\\
           \min(u)-1 & \text{if }\min(u)>s\\
          \end{cases}$$
for the residue class of a monomial $u$ of $E$ with $e_s\!\not|\;u$ . By
the choice of $s$ we see immediately that $\max\{\min(\overline{u}):\overline{u}\in
\overline{J}\}=s$. On the other hand $\overline{J}\subseteq
(e_{i_1},\ldots,e_{i_r})+(e_s)/(e_s)$.
By an appropriate  induction on $n$ we  get that
$s\leq r$ if $s\not\in\{i_1,\ldots,i_r\}$ and $s\leq r-1$ otherwise.
\end{proof}

\section{Depth of graded $E$-modules}
\label{sect depth}

The purpose of this section is to present further results on
regular sequences over the exterior algebra.

Recall that $H(M,t)=\sum_{i\in\Z}\dim_K M_i t^i$
denotes the Hilbert series of  a graded $E$-module $M$.
Analogously to the well-known Hilbert-Serre theorem
(see, e.g., \cite[Proposition 4.4.1]{BH})
we have the following result.

\begin{thm}\label{hilbert series}
Let $|K|=\infty$ and $0\neq J\subset E$ be a graded ideal with $\depth E/J=s$. Let $E/J$ have a linear injective resolution.
Then there exists a polynomial $Q(t)\in\Z[t]$ with non-negative
coefficients such that the Hilbert series of $E/J$ has the form
$$
H(E/J,t)=Q(t)\cdot (1+t)^s \text{ with } \ Q(-1)\not=0.
$$
\end{thm}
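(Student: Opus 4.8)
The plan is to isolate the factor $(1+t)^s$ by means of a maximal regular sequence, and then to show that the complementary factor does not vanish at $t=-1$ by reading the order of vanishing of $H(E/J,t)$ off a linear resolution; the linear injective resolution hypothesis will enter only in this last step, which is consistent with the remark that the statement fails for arbitrary quotients.

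First I would record the effect of a single regular element on the Hilbert series. If $v\in E_1$ is $M$-regular then $0:_M v=vM$, so the multiplication map $v\colon M_i\to M_{i+1}$ has kernel $(vM)_i$ and image $(vM)_{i+1}$. Rank--nullity gives $\dim_K M_i=\dim_K(vM)_i+\dim_K(vM)_{i+1}$, and since $\dim_K(M/vM)_i=\dim_K(vM)_{i+1}$ this rearranges to
$$
H(M,t)=(1+t)\,H(M/vM,t).
$$
Because $|K|=\infty$ and $\depth E/J=s$, I can choose a maximal $E/J$-regular sequence $v_1,\dots,v_s\in E_1$ and set $\overline{E/J}=(E/J)/(v_1,\dots,v_s)(E/J)\neq 0$. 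Iterating the identity yields $H(E/J,t)=(1+t)^s\,Q(t)$ with $Q(t)=H(\overline{E/J},t)$. As $Q$ is the Hilbert series of a nonzero finite-dimensional graded $K$-vector space, $Q\in\Z[t]$ has non-negative coefficients. This settles the factorization and the positivity, so the whole remaining content is the claim $Q(-1)\neq 0$.

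Now I would reformulate $Q(-1)\neq 0$: since $Q$ has non-negative coefficients, it is equivalent to $\ord_{t=-1}H(E/J,t)=s$, and here the bound ``$\geq s$'' is automatic, so the point is to bound the order above by $s$. For this I pass to the dual $N=(E/J)^*$, which by $\mu_{i,j}(E/J)=\beta_{i,n-j}(N)$ has an $(n-d)$-linear projective resolution with $F_i=E(-(n-d)-i)^{\beta_i(N)}$. Computing the Hilbert series degree by degree (for each fixed internal degree only finitely many homological positions contribute) gives
$$
H(N,t)=(1+t)^n\,t^{\,n-d}\,B(-t),\qquad B(x)=\sum_{i\geq 0}\beta_i(N)\,x^i.
$$
Because the $\beta_i(N)$ are non-negative and grow polynomially, $B$ is rational with its only singularity a pole at $x=1$, whose order equals $\cx N$ by the standard link between the pole order at $1$ of a non-negative rational generating function and the growth rate of its coefficients. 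Reading off the order of vanishing at $t=-1$, where $t^{n-d}$ is a unit, $(1+t)^n$ contributes $n$, and $B(-t)$ contributes $-\cx N$, gives $\ord_{t=-1}H(N,t)=n-\cx N=\depth N$ via $\cx N+\depth N=n$. Finally $H(N,t)=t^nH(E/J,1/t)$, and $t\mapsto 1/t$ fixes $-1$ with nonzero derivative, so $\ord_{t=-1}H(E/J,t)=\ord_{t=-1}H(N,t)=\depth(E/J)^*$.

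The hard part will be the final identification $\depth(E/J)^*=\depth E/J=s$. The linearity is what forces the cancellation-free generating function $B$, whose pole order is legible precisely because the Betti numbers of a linear resolution sit in one internal degree per homological step; but this computation inherently produces the depth of the \emph{dual}. Matching it with the depth of $E/J$ itself is exactly the self-duality of depth recorded in Theorem~\ref{regdual} (equivalently $\cx E/J=\cx(E/J)^*$). Granting that, $\ord_{t=-1}H(E/J,t)=s$, so $Q(-1)\neq 0$ and the proof is complete. The one place an over-hasty argument could fail is the identification of the pole order of $B$ with $\cx N$: I would verify that there are no accidental poles at other roots of unity, which is guaranteed here since the denominator $t^{\,n-d}(1+t)^n$ has its only roots at $0$ and $-1$.
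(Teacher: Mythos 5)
Your proposal is correct, but it reaches the crucial point $Q(-1)\neq 0$ by a genuinely different route from the paper. The two arguments coincide up to the factorization $H(E/J,t)=(1+t)^s\,Q(t)$ via a maximal regular sequence (your rank--nullity derivation of $H(M,t)=(1+t)H(M/vM,t)$ is even a bit slicker than the paper's pair of exact sequences). From there the paper reduces to $\depth E/J=0$, replaces $J$ by $\gin(J)$ (Hilbert series and depth being invariant), and uses the Aramova--Herzog--Hibi Betti numbers of stable ideals to write $H(J,t)=\sum_{k=1}^n\sum_{j=1}^k m_{kj}(J)\,t^j(1+t)^{n-k}$; the linear injective resolution then enters combinatorially, forcing every minimal generator $u$ with $n\in\supp(u)$ to have degree $d+1$ (because $(E/J)^*$ is generated in the single degree $n-d$), so that exactly one summand, $m_{n,d+1}t^{d+1}$, is prime to $1+t$. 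You instead avoid $\gin$ entirely: the cancellation-free Euler characteristic of the $(n-d)$-linear resolution of $N=(E/J)^*$ gives $H(N,t)=t^{n-d}(1+t)^nB(-t)$ (the degreewise finiteness you invoke is genuine, since $F_i$ lives in internal degrees $n-d+i$ through $2n-d+i$), the pole order of the non-negative rational series $B$ at $x=1$ is $\cx N$ by partial fractions, and then $\cx N+\depth N=n$ from \cite{AAH} together with the substitution $H(N,t)=t^nH(E/J,1/t)$ yields $\ord_{t=-1}H(E/J,t)=\depth N$. Your route buys a sharper and more general statement -- for any module with a linear injective (equivalently, whose dual has a linear projective) resolution, the order of vanishing of the Hilbert series at $t=-1$ is exactly the depth -- and it makes transparent that linearity is used precisely to prevent cancellation in the alternating sum, matching the paper's counterexample $(e_1e_2,e_1e_3,e_1e_4,e_2e_3e_4)$. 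The price is reliance on the depth self-duality $\depth M=\depth M^*$ of Theorem \ref{regdual}; since that theorem appears \emph{after} Theorem \ref{hilbert series} in the paper, you should note explicitly that its proof (via Cartan cohomology and the vanishing results of Section \ref{sect Cartan complex}) nowhere uses Theorem \ref{hilbert series}, so there is no circularity. The paper's combinatorial proof, by contrast, needs no depth duality and has the side benefit of locating a minimal generator of $\gin(J)$ in degree exactly $d+1$, a fact it reuses verbatim in the proof of Theorem \ref{reg+depth=d}.
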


Note that it is not possible to generalize the equation in this
form to the case of arbitrary quotient rings. The ideal $(e_1e_2,e_1e_3,e_1e_4,e_2e_3e_4)$  provides a counterexample.
%In particular, this equality does not follow from \cite[Corollary 3.8]{AAH}.

\begin{proof}
Let $M=E/J$.
First of all we show that if $v$ is $M$-regular, then
\begin{equation}
\label{eq1}
H(M,t)=(1+t)H(M/vM,t).
\end{equation}

We have the exact sequence
$$0\longrightarrow vM\longrightarrow M\longrightarrow M/vM\longrightarrow 0$$
which implies
\begin{equation}
\label{eq2} H(vM,t)=H(M,t)-H(M/vM,t).
\end{equation}
As $v$ is
$M$-regular the sequence
$$ 0\longrightarrow vM(-1)\longrightarrow M(-1)\stackrel{\cdot v}{\longrightarrow} M \longrightarrow M/vM \longrightarrow 0$$
is exact and gives
\begin{equation}
\label{eq3} (1-t)H(M,t)=H(M/vM,t)-tH(vM,t).
\end{equation}
Equations (\ref{eq2}) and (\ref{eq3}) together show (\ref{eq1}).

Thus if $v_1,\ldots,v_s$ is a maximal $E/J$-regular sequence, we
obtain inductively
$$H(E/J,t)=(1+t)^s H(E/J+(v_1,\ldots,v_s),t).$$
The Hilbert series of $E/(J+(v_1,\ldots,v_s))$ is a polynomial with
nonnegative coefficients and $\depth  E/(J+(v_1,\ldots,v_s))=0$. We
claim that the polynomial $1+t$ does not divide
$H(E/(J+(v_1,\ldots,v_s)),t)$.

To this end we may assume that $\depth  E/J=0$. The Hilbert series
and the depth of $E/J$ and $E/\gin(J)$ coincide, so we may assume in
addition that $J$ is strongly stable. Then we know the Betti numbers
of $J$. Proving that $1+t$ does not divide the Hilbert series of $E/J$
is the same as showing this for $J$ as the Hilbert series of $E$
is $(1+t)^n$.

Let $m_{kj}(J)=|\{u\in G(J):\max(u)=k, \deg(u)=j\}|$. Computing the
Hilbert series of $J$ via the minimal graded free resolution of $J$
gives
\begin{eqnarray*}
H(J,t)&=&\sum_{i\geq 0}(-1)^iH(\Dirsum_{j\in\Z}E(-j)^{\beta_{ij}(J)},t)\\
      &=&\sum_{i\geq 0}(-1)^i\sum_{j\in\Z}t^j\beta_{ij}(J)(1+t)^n\\
      &=&\sum_{i\geq 0}(-1)^i\sum_{j\in\Z}t^{i+j}\beta_{i,i+j}(J)(1+t)^n\\
      &=&\sum_{i\geq 0}(-1)^i\sum_{j\in\Z}t^{i+j}(1+t)^n\sum_{u\in G(J)_j}\binom{\max(u)+i-1}{\max(u)-1}\\
      &=&\sum_{i\geq 0}(-1)^i\sum_{k=1}^n\sum_{j=1}^k m_{kj}(J)\binom{k+i-1}{k-1}t^{i+j}(1+t)^n\\
      &=&\sum_{k=1}^n\sum_{j=1}^km_{kj}(J)t^j(1+t)^n\sum_{i\geq 0}\binom{k+i-1}{k-1}(-1)^it^i\\
      &=&\sum_{k=1}^n\sum_{j=1}^km_{kj}(J)t^j(1+t)^n \frac{1}{(1+t)^k}\\
      &=&\sum_{k=1}^n\sum_{j=1}^km_{kj}(J)t^j(1+t)^{n-k}.\\
\end{eqnarray*}
All coefficients appearing in the last sum are non-negative
hence no term can be canceled by another.
$n-\cx E/J=\depth E/J=0$ and Proposition \ref{cx} imply that $m_{nj}(J)\not=0$ for
some $j$. Let $u=e_Fe_n\in G(J)$. We have
$e_Fe_i\in J$ for all $i=1,\ldots,n$ because $J$ is stable. The dual of $E/J$ is
$(E/J)^*\cong 0:_E J$, which is generated by all monomials $e_F$ with
$e_{F^c}\not\in J$ (cf. Example \ref{ex face ring}). But then
$e_{(F\cup\{i\})^c}=e_{F^c\setminus\{i\}}\not\in (E/J)^*$ for all
$i\not\in F$. As $e_F\not\in J$ (otherwise $e_Fe_n$ would not be a
minimal generator), the complement $e_{F^c}$ is in $(E/J)^*$ and
even a minimal generator. The ideal $(E/J)^*$ has an $(n-d)$-linear
projective resolution, in particular it is generated in degree $n-d$, so
$|F|=n-|F^c|=n-(n-d)=d.$
Thus we have seen that every minimal generator 
$u\in G(J)$ with $n\in\supp(u)$ has degree $d+1$. 
Hence $m_{nj}(J)=0$ for $j\not=d+1$ and $m_{n,d+1}\not=0$. 
Thus there is exactly one summand in $H(J,t)$ that is not divisible by $1+t$ 
and we see that $1+t$ does not divide $H(J,t)$.
\end{proof}

Next we want to compare regular sequences on a module and its dual.
To this end we need the following lemma. Since $v^2=0$ for $v\in
E_1$ the multiplication map on a graded $E$-module $M$ induces a
complex
$$
(M,v): \qquad \ldots\longrightarrow M_{i-1}\stackrel{\cdot v }{\longrightarrow} M_{i}\stackrel{\cdot v }{\longrightarrow} M_{i+1}\longrightarrow \ldots
$$
The homology of this complex is denoted by $H_i(M,v)$. Then $v$ is
regular on $M$ if and only if $H_i(M,v)=0$ for all $i$.

\begin{lem}\label{regseq}
 Let $0\to U\to M\to N\to 0$ be an exact sequence of modules in $\Mcc$. If $v\in E_1$ is regular on two of the three modules, then it is regular on the third.
\end{lem}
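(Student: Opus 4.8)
The plan is to deduce the statement from the long exact homology sequence attached to the three multiplication-by-$v$ complexes. Since the maps in the exact sequence $0\to U\to M\to N\to 0$ are homomorphisms of $E$-modules, they commute with multiplication by $v$, and because the sequence is exact in each internal degree they fit into a commutative ladder whose rows are the degreewise exact sequences $0\to U_i\to M_i\to N_i\to 0$. Thus I would first observe that these data assemble into a short exact sequence of complexes
$$
0\longrightarrow (U,v)\longrightarrow (M,v)\longrightarrow (N,v)\longrightarrow 0.
$$
This is the only place where anything needs checking, and it is immediate from $E$-linearity together with the degreewise exactness of the original sequence.

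Next I would invoke the standard long exact homology sequence of a short exact sequence of complexes. Since the differential $\cdot v$ raises the internal degree by one, the resulting sequence reads
$$
\cdots\longrightarrow H_i(U,v)\longrightarrow H_i(M,v)\longrightarrow H_i(N,v)\stackrel{\delta}{\longrightarrow} H_{i+1}(U,v)\longrightarrow\cdots
$$
where $\delta$ is the connecting homomorphism. I would then translate regularity into vanishing of homology using the characterization recalled immediately before the statement: $v$ is regular on a module $L$ if and only if $H_i(L,v)=0$ for all $i$.

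Finally I would run through the three cases, in each of which two of the three families of homology modules vanish and exactness forces the third to vanish as well. If $H(U,v)=0$ and $H(N,v)=0$, then each $H_i(M,v)$ sits between two zeros, so $H_i(M,v)=0$; if $H(U,v)=0$ and $H(M,v)=0$, the segment $H_i(M,v)\to H_i(N,v)\to H_{i+1}(U,v)$ pins down $H_i(N,v)=0$; and if $H(M,v)=0$ and $H(N,v)=0$, the segment $H_{i-1}(N,v)\to H_i(U,v)\to H_i(M,v)$ pins down $H_i(U,v)=0$. In each case the third module has vanishing homology in all degrees, hence $v$ is regular on it.

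I do not expect a genuine obstacle here: the entire content is the bookkeeping of a long exact sequence. The only points requiring a line of care are the verification that $0\to (U,v)\to (M,v)\to (N,v)\to 0$ is exact as a sequence of complexes and the correct internal-degree indexing of the connecting map, both of which are routine.
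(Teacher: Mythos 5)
Your proposal is correct and follows exactly the paper's own argument: form the short exact sequence of complexes $0\to (U,v)\to (M,v)\to (N,v)\to 0$, pass to the long exact homology sequence, and use the characterization that $v$ is regular on a module precisely when all homology modules $H_i(\cdot,v)$ vanish. Your version merely spells out the three vanishing cases and the degree conventions that the paper leaves implicit, which is fine but adds nothing essentially new.
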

\begin{proof}
The short exact sequence induces a short exact sequence of complexes
$$0\longrightarrow (U,v) \longrightarrow (M,v) \longrightarrow (N,v) \longrightarrow 0$$
which induces a long exact sequence of homology modules
$$
\ldots \longrightarrow H_{i-1}(N,v) \longrightarrow H_i(U,v) \longrightarrow H_i(M,v)
\longrightarrow H_i(N,v)\longrightarrow  H_{i+1}(U,v)\longrightarrow\ldots
$$
Then the observation that $v$ is regular on one of these modules, say $M$, if and only if the corresponding homology $H_i(M,v)$ is zero for all $i$ concludes the proof
\end{proof}

Let $v_1,\ldots,v_s\in E_1$ and $M\in\Mcc$. To simplify notation we
define $$\qquad \qquad \quad \quad  H_i(k)=H_i(v_1,\ldots,v_k;M)
\text{ for } i>0,\  k=1,\ldots,s$$ and
\begin{eqnarray*}
\tilde{H}_0(k)&=&\frac{0:_{M/(v_1,\ldots,v_{k-1})M}v_k}{v_k(M/(v_1,\ldots,v_{k-1})M)}.\\
\end{eqnarray*}

Analogously
$$\qquad \qquad \quad \quad  H^i(k)=H^i(v_1,\ldots,v_k;M) \text{ for } i>0,\  k=1,\ldots,s$$
and
\begin{eqnarray*}
\tilde{H}^0(k)&=&\frac{0:_{0:_M(v_1,\ldots,v_{k-1})}v_k}{v_k(0:_M(v_1,\ldots,v_{k-1}))}.\\
\end{eqnarray*}

Finally we set $H_i(0)=H^i(0)=0$ for $i>0$. The modules
$\tilde{H}_0(k)$ and $\tilde{H}^0(k)$ are not the $0$-th Cartan
homology and cohomology but defined such that the long exact
sequences of Cartan homology and cohomology modules of Proposition
\ref{long cartan seq} induces exact sequences
$$\ldots\longrightarrow H_2(k) \longrightarrow H_1(k)(-1) \longrightarrow H_1(k-1)\longrightarrow H_1(k)\longrightarrow \tilde{H}_0(k)(-1)\longrightarrow 0$$
and
$$0\longrightarrow \tilde{H}^0(k)(+1)\longrightarrow H^1(k)\longrightarrow H^1(k-1)\longrightarrow H^1(k)(+1) \longrightarrow H^2(k)\longrightarrow\ldots$$

\begin{thm}\label{regdual}
Let $0\neq M\in\Mcc$. A sequence $\vb=v_1,\ldots,v_s\in E_1$ is an
$M$-regular sequence if and only if it is an $M^*$-regular sequence.
In particular,
$$
\depth  M=\depth  M^* \text{ and }\cx M=\cx  M^*.
$$
\end{thm}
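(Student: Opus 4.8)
The plan is to prove the stated equivalence of regular sequences and then deduce the two equalities formally. Granting that $\vb$ is $M$-regular if and only if it is $M^*$-regular, the sets of $M$- and $M^*$-regular sequences coincide, hence so do the maximal ones; therefore $\depth M=\depth M^*$, and then $\cx M=n-\depth M=n-\depth M^*=\cx M^*$ by \cite[Theorem 3.2]{AAH}. So everything reduces to the equivalence. To establish it I first discard the nonvanishing clause and compare the stepwise conditions ``$v_k$ is regular on $M/(v_1,\dots,v_{k-1})M$ for $k=1,\dots,s$'' for $M$ and for $M^*$, by induction on $s$.

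The base case $s=1$ is the only place where homology and cohomology are genuinely linked, and it is where Proposition \ref{co and hom} does the real work. For a single $v\in E_1$ and $P\in\Mcc$ the Cartan complex and cocomplex of $v$ are both the complex with $P$ in every spot and every differential equal to multiplication by $v$, so $H_i(v;P)=H^i(v;P)=(0:_Pv)/vP$ for all $i\ge1$; in particular $v$ is $P$-regular exactly when $H_1(v;P)=0$. Combining $H_i(v;P)^*\cong H^i(v;P^*)$ from Proposition \ref{co and hom} with the fact that $N=0\iff N^*=0$ (immediate from $(N^*)_i\cong(N_{n-i})^\vee$) and the cocomplex computation for $P^*$ gives the bridge
$$v\ \textrm{regular on}\ P\iff H_i(v;P)=0\iff H^i(v;P^*)=0\iff v\ \textrm{regular on}\ P^*.$$

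For the inductive step I need the identification $(M/v_1M)^*\cong 0:_{M^*}v_1$, obtained by applying the exact functor $(\cdot)^*$ to the presentation $M(-1)\xrightarrow{v_1}M\to M/v_1M\to0$: the dual is $0\to(M/v_1M)^*\to M^*\xrightarrow{v_1}M^*(+1)$, whose kernel term is $0:_{M^*}v_1$. Now split the stepwise conditions at $v_1$. If $v_1$ is not $M$-regular, then by the base case it is not $M^*$-regular and both families of conditions fail. If $v_1$ is $M$-regular, hence $M^*$-regular, then $0:_{M^*}v_1=v_1M^*\cong M^*/v_1M^*$ as graded modules up to shift, and the remaining conditions for $M$ are precisely the stepwise conditions for $v_2,\dots,v_s$ on $M/v_1M$. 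The induction hypothesis applied to $M/v_1M$ turns these into the stepwise conditions for $v_2,\dots,v_s$ on $(M/v_1M)^*=0:_{M^*}v_1\cong M^*/v_1M^*$, which are exactly the remaining conditions for $M^*$. This closes the induction.

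It remains to restore the nonvanishing clause, which is harmless: if $M\neq0$ and $v$ is $M$-regular then $M/vM\neq0$, since $M/vM=0$ would give $vM=M$ and $0:_Mv=M$, forcing $M=vM=0$; iterating, the stepwise conditions together with $M\neq0$ already force $M/(v_1,\dots,v_s)M\neq0$, and $M\neq0\iff M^*\neq0$. The main obstacle is the one visible throughout: Proposition \ref{co and hom} only ever trades the homology of a module for the cohomology of its dual, and duality sends the \emph{quotient} $M/v_1M$ that governs $M$-regularity to the \emph{annihilator submodule} $0:_{M^*}v_1$ rather than to the quotient $M^*/v_1M^*$ that governs $M^*$-regularity. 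Both points are overcome only because the single-element case provides a true homology--cohomology identity and because the regularity of $v_1$ yields $0:_{M^*}v_1\cong M^*/v_1M^*$. One could instead route the induction through the long exact sequences preceding the theorem, whose terms $\tilde H_0(k)$ and $\tilde H^0(k)$ record the stepwise conditions for $M$ and (after the same single-element duality) for $M^*$.
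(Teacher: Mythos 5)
Your proof is correct, and it takes a genuinely different and substantially shorter route than the paper's. The paper reduces to one implication ($M^*$-regular $\Rightarrow$ $M$-regular) and runs a layered induction showing that each $v_k$ is regular on every module of the form $v_{i_1}\cdots v_{i_r}(v_{j_1},\ldots,v_{j_t})M$, fed by the vanishing of the Cartan cohomology modules $\tilde{H}^0(k;M)$ (obtained from Proposition \ref{co and hom}), the auxiliary observations $(*)$ and $(**)$, and Lemma \ref{regseq}; the nonvanishing clause is then settled by a further induction proving $(M/(v_1,\ldots,v_s)M)^*\cong v_s\cdots v_1M^*$ (Corollary \ref{dualmodreg}). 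You instead induct on the length $s$: your base case uses the collapse of both the Cartan complex and cocomplex of a single $v$ to multiplication by $v$, so that $H_1(v;P)\cong(0:_Pv)/vP\cong H^1(v;P)$, and Proposition \ref{co and hom} then gives the symmetric bridge ``$v$ regular on $P$ iff on $P^*$''; your inductive step rests on exactness of $(\cdot)^*$ applied to the presentation $M(-1)\stackrel{v_1}{\to}M\to M/v_1M\to 0$, giving $(M/v_1M)^*\cong 0:_{M^*}v_1$, which under regularity of $v_1$ equals $v_1M^*\cong (M^*/v_1M^*)(-1)$, so that dualizing commutes with passing to the quotient and the hypothesis applies to $M/v_1M$. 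This is sound: your key identification is exactly the one the paper itself uses in the $s=1$ step of its nonvanishing argument, and iterating your step recovers Corollary \ref{dualmodreg} in full, so nothing the paper needs later is lost; note also that you correctly avoid Corollary \ref{reg over hom or cohom}, which the paper derives \emph{from} this theorem and whose use here would be circular. Two minor remarks: multiplication by $v_1$ is $E$-linear only up to sign in $\Mcc$, but its kernel and image are honest graded submodules, so your identifications stand; and your nonvanishing observation can be streamlined, since $vN=N$ forces $N=v(vN)=v^2N=0$ (or one can invoke graded Nakayama), so the clause is automatic for $0\neq M$ on both sides. As for what each approach buys: the paper's proof yields the stronger intermediate regularity statements on all the products $v_{i_1}\cdots v_{i_r}(v_{j_1},\ldots,v_{j_t})M$ and Corollary \ref{dualmodreg} along the way, while your argument is shorter, replaces three nested inductions by one, and does not even need the paper's opening reduction to $|K|=\infty$.
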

\begin{proof} We may assume that $|K|=\infty$.
It is enough to prove $\depth  M=\depth  M^*$.
Then $\cx M=\cx  M^*$ follows from the formula
$\cx M+\depth M=n$.

To prove the assertion it is enough to show that if $\vb$ is an
$M^*$-regular sequence, then it is an $M$-regular sequence as well.

First of all we state two observations which will be used several
times in the proof. Let $N,N'\in\Mcc$ and $v\in E_1$.
\begin{equation}\tag{$*$}
 \text{If } v \text{ is } N'\text{-regular and } vN'\subseteq vN, \text{ then } vN\cap N'=vN'.
\end{equation}
This is obvious since $x\in vN\cap N'$ implies $x\in 0:_{N'}v=vN'$.
\begin{equation}\tag{$**$}
 \text{If }v \text{ is regular on } N,N' \text{ and } N\cap N', \text{ then } v \text{ is regular on } N+N'.
\end{equation}
This follows from the short exact sequence
$$
0\longrightarrow N\cap N' \longrightarrow N\oplus N' \longrightarrow N+N' \longrightarrow 0
$$
and Lemma \ref{regseq}.

The main task is to show by an induction on $t$ that $v_k$ is
regular on each module of the form $v_{i_1}\cdots
v_{i_r}(v_{j_1},\ldots,v_{j_t})M$ for
$\{i_1,\ldots,i_r,j_1,\ldots,j_t\}\subseteq\{1,\ldots,s\}\setminus
\{k\}$ and all $k=1,\ldots,s$.

Then with $r=0$, $t=k-1$ this means that $v_k$ is
$(v_1,\ldots,v_{k-1})M$-regular, with $r=t=0$ that $v_k$ is
$M$-regular. Hence the exact sequence
$$0\longrightarrow (v_1,\ldots,v_{k-1})M\longrightarrow M \longrightarrow M/(v_1,\ldots,v_{k-1})M \longrightarrow 0$$
implies by Lemma \ref{regseq} that $v_k$ is $M/(v_1,\ldots,v_{k-1})M$-regular for all $k=1,\ldots,s$.

For the induction on $t$ let $t=0$. For simplicity we show that $v_k$ is
$v_1\cdots v_{k-1}M$-regular. But as permutations of regular
sequences are regular sequences, the proof works for arbitrary
elements of the sequence as well.

The Cartan homology of $\vb$ with values  in $M^*$ vanishes (see Proposition \ref{reg over cartan}) which
implies by Proposition \ref{co and hom} that the Cartan cohomology
of $\vb$ with values in $M$ vanishes. In particular
$$
0=\tilde{H}^0(k;M)=\frac{0:_{0:_M(v_1,\ldots,v_{k-1})}v_k}{v_k(0:_M(v_1,\ldots,v_{k-1}))}
$$
for all $k=1,\ldots,s$.
We show by a second induction on $k$ that
$v_k$ is $v_{k-1}\cdots v_{1}M$-regular.

If $k=1$ we have
$$
0=\tilde{H}^0(1;M)=\frac{0:_M v_1}{v_1M}.
$$
Hence $v_1$ is $M$-regular.
Now suppose that the assertion is known for $k-1$.

The module $0:_M(v_1,\ldots,v_{k-1})$ contains all elements of $M$
that are annihilated by all $v_i$, $i=1,\ldots,k-1$. Since every
$v_i$ is $M$-regular (where we use the same argument as for $v_1$, since permutations
of regular sequences are regular sequences),
$0:_M(v_1,\ldots,v_{k-1})=v_{k-1}M\cap\ldots\cap v_{1}M$. We show
$v_{l-1}M\cap\ldots\cap v_{1}M=v_{l-1}\cdots v_{1}M$ by another
induction on $l$, $2\leq l \leq k$.

If $l=2$ this is obvious. Now if $l>2$ we have
$$
v_{l-1}M\cap\ldots\cap v_{1}M=v_{l-1}M\cap(v_{l-2}M\cap\ldots\cap v_1M)
=v_{l-1}M\cap v_{l-2}\cdots v_1M\stackrel{(*)}{=}v_{l-1}\cdots v_{1}M
$$
where the induction hypothesis of the induction on $k$ is used,
i.e.\ that $v_{l-1}$ is regular on $v_{l-2}\cdots v_{1}M$ since $l-1\leq k-1$.

Then
$$
0=\tilde{H}^0(k;M)=\frac{0:_{0:_M(v_1,\ldots,v_{k-1})}v_k}{v_k(0:_M(v_1,\ldots,v_{k-1}))}=
\frac{0:_{v_{k-1}\cdots v_{1}M}v_k}{v_k(v_{k-1}\cdots v_{1}M)}
$$
implies that $v_k$ is $v_{k-1}\cdots v_{1}M$-regular.
Thus we proved the basis for the induction on $t$.

Now suppose $t>0$. We decompose $v_{i_1}\cdots
v_{i_r}(v_{j_1},\ldots,v_{j_t})M$ in two parts. By induction
hypothesis $v_k$ is regular on $v_{i_1}\cdots
v_{i_r}(v_{j_1},\ldots,v_{j_{t-1}})M$ and on $v_{i_1}\cdots
v_{i_r}v_{j_t}M$. Furthermore the induction hypothesis gives that
$v_{j_t}$ is $v_{i_1}\cdots
v_{i_r}(v_{j_1},\ldots,v_{j_{t-1}})M$-regular. Hence it follows from $(*)$
that the intersection of the two parts is
$$
v_{i_1}\cdots v_{i_r}(v_{j_1},\ldots,v_{j_{t-1}})M\cap v_{i_1}\cdots v_{i_r}v_{j_t}M=v_{j_t}v_{i_1}\cdots v_{i_r}(v_{j_1},\ldots,v_{j_{t-1}})M.
$$
Again by induction hypothesis $v_k$ is regular on this intersection.
So $(**)$ implies that $v_k$ is regular on $v_{i_1}\cdots
v_{i_r}(v_{j_1},\ldots,v_{j_{t-1}})M+v_{i_1}\cdots
v_{i_r}v_{j_t}M=v_{i_1}\cdots v_{i_r}(v_{j_1},\ldots,v_{j_t})M$.
This concludes the proof of our induction on $t$.

Finally it remains to show that $M/(v_1,\ldots,v_s)M\not=0$ for
$v_1,\ldots,v_s$ being an $M$-regular sequence. To this end we prove
by (a new) induction on $s$ that $(M/(v_1,\ldots,v_s)M)^*\cong v_s\cdots
v_1M^*$. If $s=1$ this follows from the exact sequence
$$
0\longrightarrow 0:_Mv_1 \longrightarrow M \stackrel{\cdot v_1}{\longrightarrow} M \longrightarrow M/v_1M \longrightarrow 0
$$
and the corresponding exact dual sequence
$$
0\longrightarrow (M/v_1M)^* \longrightarrow M^* \stackrel{\cdot v_1}{\longrightarrow} M^* \longrightarrow (0:_Mv_1)^* \longrightarrow 0
$$
because here $(M/v_1M)^*$ is the kernel of the multiplication with
$v_1$ which is $v_1M^*$ as $v_1$ is $M^*$-regular.

Now suppose $s>1$ and the assertion is proved for sequences of
length $<s$.

An induction on $r$ similar as in the first part of the proof shows
that $v_k$ is regular on $v_{i_1}\cdots
v_{i_r}(v_{j_1},\ldots,v_{j_t})M^*$ for
$\{i_1,\ldots,i_r,j_1,\ldots,j_t\}\subseteq [s]\setminus \{k\}$ for
$k=1,\ldots,s$, this time using the decomposition $$v_{i_1}\cdots
v_{i_r}(v_{j_1},\ldots,v_{j_t})M^*=v_{i_1}\cdots
v_{i_{r-1}}(v_{j_1},\ldots,v_{j_t})M^*\cap v_{i_1}\cdots
v_{i_{r-2}}v_{i_r}(v_{j_1},\ldots,v_{j_t})M^*.$$

In particular, $v_s$ is regular on $v_{s-1}\cdots v_1M^*$.
By the induction hypothesis (of the induction on $s$)
we have
$$
v_s\cdots v_1M^*=v_s(v_{s-1}\cdots v_1M^*)\cong v_s (M/(v_{s-1},\ldots, v_1)M)^*.
$$
We have already seen that $v_s$ is regular on $M/(v_{s-1},\ldots,
v_1)M$ and hence also on its dual $(M/(v_{s-1},\ldots, v_1)M)^*$.
Thus a second application of the induction hypothesis gives
\begin{eqnarray*}
v_s (M/(v_{s-1},\ldots, v_1)M)^*&\cong& \big(M/(v_{s-1},\ldots, v_1)M\big/v_s(M/(v_{s-1},\ldots, v_1)M)\big)^*\\
 &\cong& (M/(v_s,v_{s-1},\ldots,v_1)M)^*.
 \end{eqnarray*}

The module $M/(v_s,v_{s-1},\ldots,v_1)M$ is zero if and only if
$(M/(v_s,v_{s-1},\ldots,v_1)M)^*$ is zero. As we have just seen the
latter is isomorphic to $v_1\cdots v_sM^*$. If $v_1\cdots v_sM^*$
were zero, this would imply $0=v_1\cdots v_sM^*=0:_{v_1\cdots
v_{s-1}M^*}v_s=v_1\cdots v_{s-1}M^*$. Inductively we would obtain
$M^*=0$, a contradiction. This concludes the proof.
\end{proof}

We state a corollary which has been proved by the way in the proof
of Theorem \ref{regdual}.

\begin{cor}\label{dualmodreg}
Let $M\in\Mcc$ and $v_1,\ldots,v_s$ be an $M^*$-regular sequence.
Then
$$
(M/(v_1,\ldots, v_s)M)^*\cong v_1\cdots v_sM^*
$$
as graded $E$-modules.
\end{cor}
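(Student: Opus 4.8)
The plan is to recognize that this statement has already been verified inside the proof of Theorem \ref{regdual}, so that almost nothing new is required. Recall that in the final part of that proof, under the standing hypothesis that $v_1,\ldots,v_s$ is $M^*$-regular, an induction on $s$ established the isomorphism $(M/(v_1,\ldots,v_s)M)^*\cong v_s\cdots v_1M^*$ of graded $E$-modules. Since the hypothesis of the present corollary is exactly that $v_1,\ldots,v_s$ is $M^*$-regular, this isomorphism is available verbatim, and I would simply extract it.

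The only discrepancy between that formula and the asserted one is the order in which the $v_i$ are multiplied. First I would note that distinct elements of $E_1$ anticommute, so in $E$ we have $v_1\cdots v_s=(-1)^{\binom{s}{2}}v_s\cdots v_1$. Multiplication by the unit $(-1)^{\binom{s}{2}}\in K$ is a degree-preserving automorphism of $M^*$, and therefore the two images $v_1\cdots v_sM^*$ and $v_s\cdots v_1M^*$ coincide as graded submodules of $M^*$. Combining this identification with the isomorphism recalled above yields $(M/(v_1,\ldots,v_s)M)^*\cong v_1\cdots v_sM^*$, which is exactly the claim.

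The main (and essentially only) point to be careful about is that the reordering of the product genuinely produces the same submodule of $M^*$ rather than merely an abstractly isomorphic one; but this is immediate, because the two products differ by a scalar unit, so no real obstacle arises. In effect the corollary is a bookkeeping restatement of a computation already carried out in the course of proving Theorem \ref{regdual}, which is why the paper remarks that it ``has been proved by the way''.
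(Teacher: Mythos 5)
Your proposal is correct and takes essentially the same route as the paper: the paper offers no independent argument, stating only that the corollary ``has been proved by the way'' via the induction on $s$ in the final part of the proof of Theorem \ref{regdual}, which is exactly the isomorphism $(M/(v_1,\ldots,v_s)M)^*\cong v_s\cdots v_1M^*$ you extract. Your added observation that $v_1\cdots v_s=(-1)^{\binom{s}{2}}v_s\cdots v_1$ in $E$, so the two products generate the same graded submodule of $M^*$, correctly settles the ordering discrepancy that the paper leaves implicit.
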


The relation between Cartan homology and Cartan cohomology in
Proposition \ref{co and hom} provides the following corollary.

\begin{cor}\label{reg over hom or cohom}
Let $M\in\Mcc$ and $\vb= v_1,\ldots,v_s\in E_1$. Then the following
statements are equivalent:
\begin{enumerate}
 \item $v_1,\ldots,v_s$ is $M$-regular;
 \item $H_1(\vb;M)=0$;
 \item $H_i(\vb;M)=0$ for all $i>0$;
 \item $H^1(\vb;M)=0$;
 \item $H^i(\vb;M)=0$ for all $i>0$.
\end{enumerate}
\end{cor}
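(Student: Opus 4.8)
The plan is to derive Corollary \ref{reg over hom or cohom} by combining Proposition \ref{reg over cartan}, which already establishes the equivalence of (1), (2) and (3), with the duality of Theorem \ref{regdual} and the relation between Cartan homology and cohomology in Proposition \ref{co and hom}. The statement is a clean packaging result: conditions (1)--(3) are handled by \ref{reg over cartan} verbatim, so the only new content is to splice in the cohomological conditions (4) and (5), which describe the \emph{same} regularity property through the Cartan cocomplex rather than the Cartan complex.

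First I would recall from Theorem \ref{regdual} that $\vb=v_1,\dots,v_s$ is $M$-regular if and only if it is $M^*$-regular. Applying Proposition \ref{reg over cartan} to the module $M^*$ rather than $M$, I get that $\vb$ is $M^*$-regular if and only if $H_i(\vb;M^*)=0$ for all $i\geq 1$ (equivalently, just for $i=1$). Now I invoke Proposition \ref{co and hom}, which gives the isomorphism $H_i(\vb;M)^*\cong H^i(\vb;M^*)$ of graded $E$-modules; reading this with $M$ replaced by $M^*$ and using that $(\cdot)^*$ is an exact functor that sends nonzero modules to nonzero modules (so $H_i(\vb;M^*)=0$ if and only if $H^i(\vb;M)=0$), I obtain that $\vb$ is $M^*$-regular if and only if $H^i(\vb;M)=0$ for all $i\geq 1$, and likewise if and only if $H^1(\vb;M)=0$. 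Chaining these equivalences through Theorem \ref{regdual} yields exactly (1)$\Leftrightarrow$(4)$\Leftrightarrow$(5), closing the loop with the already-known (1)$\Leftrightarrow$(2)$\Leftrightarrow$(3).

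The main subtlety to address carefully is the direction of the duality in Proposition \ref{co and hom}: since it relates homology of $M$ to cohomology of $M^*$, one must apply it to $M^*$ and use $(M^*)^*\cong M$ (which holds because $E$ is injective and all modules are finitely generated) in order to land on cohomology of $M$ itself. I expect this bookkeeping to be the only real obstacle; it is routine but easy to get backwards. An alternative that sidesteps $M^*$ entirely would be to argue directly that the Cartan cocomplex $C^{\lpnt}(\vb;M)$ detects regularity, using the long exact sequences for cohomology in Proposition \ref{long cartan seq} exactly as the dual of the argument for homology; either route gives a one-paragraph proof, so I would choose whichever is cleaner to state.
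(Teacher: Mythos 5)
Your proof is correct and takes essentially the same route as the paper: the equivalence of (i)--(iii) is quoted from Proposition \ref{reg over cartan}, and the cohomological conditions (iv), (v) are spliced in via the duality $H_i(\vb;M^*)\cong H^i(\vb;M)^*$ of Proposition \ref{co and hom} together with the observation that a module in $\Mcc$ is zero if and only if its dual is zero. The only (harmless, indeed slightly more careful) difference is that you make explicit the bridge through Theorem \ref{regdual} --- passing from $M$-regularity to $M^*$-regularity before applying Proposition \ref{reg over cartan} to $M^*$ --- a step the paper's two-line proof leaves implicit.
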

\begin{proof}
The equivalence of the first three conditions is stated in
Proposition \ref{reg over cartan}. An $E$-module is zero if and only
if its dual is zero. Thus the equality of condition (ii) and (iv)
resp. (iii) and (v) follows from $H_i(\vb;M^*)\cong H^i(\vb;M)^*$ as seen
in Proposition \ref{co and hom}.
\end{proof}

\section{Modules with linear injective resolutions}
\label{sect linear injective}

In this section we focus on $E$-modules having linear injective
resolutions. We begin with an example.

\begin{ex}\label{ex face ring}
Let $\Delta$ be a simplicial complex on $[n]$.
Then $\Delta$ is Cohen-Macaulay if and only if
the face ideal $J_{\Delta^*}=(e_F: F\not\in \Delta^*)$ of the
Alexander dual $\Delta^*=\{F\subseteq [n]: F^c\not \in \Delta\}$
(here $F^c$ denotes the complement of $F$ in $[n]$) has a linear
projective resolution as was shown in \cite[Corollary 7.6]{AH}.

This is equivalent to say that the face ring $K\{\Delta\}=E/J_\Delta$ has a linear injective
resolution as it is the dual $(J_{\Delta^*})^*\cong
E/(E/J_{\Delta^*})^*\cong E/0:_EJ_{\Delta^*}\cong E/J_\Delta$
of $J_{\Delta^*}$.
\end{ex}

If $v\in E_1$ is $M$-regular then $M$ has a $t$-linear projective
resolution over $E$ if and only if $M/vM$ has a $t$-linear
resolution over $E/(v)$.  Linear injective
resolutions behave more complicated under reduction modulo regular
elements.

\begin{lem}\label{lin inj mod reg}
Let $M\in\Mcc$ and $v\in E_1$ be an $M$-regular element. Then $M$ has a
$d$-linear injective resolution over $E$ if and only if $vM$ has a
$d$-linear injective resolution over $E/(v)$.

In particular, if $v$ is $E/J$-regular
for some graded ideal $J\subset E$, then we have that $E/J$ has a $d$-linear
injective resolution over $E$ if and only if $E/(J+(v))$ has a
$(d-1)$-linear injective resolution over $E/(v)$.
\end{lem}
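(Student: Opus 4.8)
The plan is to reduce the statement about linear injective resolutions to the companion statement about linear projective resolutions (recalled in the excerpt just before the lemma) by dualizing twice: first over $E$, and then over $\overline{E}=E/(v)$. Throughout I write $M^{*}=\Hom_{E}(M,E)$ for the $E$-dual and, for an $\overline{E}$-module $N$, write $N^{\star}=\Hom_{\overline{E}}(N,\overline{E})$ for its dual over $\overline{E}$. Since $E$ is an exterior algebra in $n$ variables and $\overline{E}$ is one in $n-1$ variables, the equivalence ``$d$-linear projective resolution $\Leftrightarrow$ (number of variables $-\,d$)-linear injective resolution of the dual'' holds over both rings, via the relation $\mu_{i,j}=\beta_{i,\#\mathrm{vars}-j}$ of the respective duals.

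First I would assemble the chain of equivalences. Since $v$ is $M$-regular it is $M^{*}$-regular by Theorem \ref{regdual}. Then: (i) $M$ has a $d$-linear injective resolution over $E$ iff $M^{*}$ has an $(n-d)$-linear projective resolution over $E$ (the Betti--Bass duality recalled above); (ii) the latter holds iff $M^{*}/vM^{*}$ has an $(n-d)$-linear projective resolution over $\overline{E}$ (the projective version of the lemma, applied to the $M^{*}$-regular element $v$); (iii) by duality over $\overline{E}$ this holds iff $(M^{*}/vM^{*})^{\star}$ has a $((n-1)-(n-d))=(d-1)$-linear injective resolution over $\overline{E}$. It then remains to identify $(M^{*}/vM^{*})^{\star}$ with $vM$ up to a shift.

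The key technical point, and the step I expect to be the main obstacle, is comparing the two dualities and keeping the degree shifts consistent. I would first record the change-of-rings identity $N^{*}\cong N^{\star}(-1)$ for every $\overline{E}$-module $N$: because $v$ annihilates $N$, any $\phi\in\Hom_{E}(N,E)$ has image in $0:_{E}v=vE$, and from $\cdot v:E(-1)\to E$ (image $vE$, kernel $(0:_{E}v)(-1)=vE(-1)$) one gets $vE\cong\overline{E}(-1)$; hence $\Hom_{E}(N,E)=\Hom_{E}(N,\overline{E}(-1))=\Hom_{\overline{E}}(N,\overline{E})(-1)$. Applying this to $N=M^{*}/vM^{*}$ and invoking Corollary \ref{dualmodreg} for the $M$-regular element $v$ (which gives $(M^{*}/vM^{*})^{*}\cong vM$), I obtain $vM\cong(M^{*}/vM^{*})^{\star}(-1)$, that is $(M^{*}/vM^{*})^{\star}\cong(vM)(1)$. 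Since a shift by $(a)$ changes the linearity index of an injective resolution by $-a$, step (iii) now reads: $(vM)(1)$ has a $(d-1)$-linear injective resolution over $\overline{E}$, equivalently $vM$ has a $d$-linear injective resolution over $\overline{E}$. This establishes the first assertion, and the whole difficulty indeed lies in carrying these shifts coherently through the two dualities.

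For the ``in particular'' statement I take $M=E/J$. The map $\cdot v:M(-1)\to M$ has image $vM$ and kernel $(0:_{M}v)(-1)=(vM)(-1)$, so $vM\cong(M/vM)(-1)=(E/(J+(v)))(-1)$ as graded $\overline{E}$-modules. By the first part, $E/J$ has a $d$-linear injective resolution over $E$ iff $vM$ has a $d$-linear injective resolution over $\overline{E}$; and since $vM\cong(E/(J+(v)))(-1)$, tracking the single shift shows that this is equivalent to $E/(J+(v))$ having a $(d-1)$-linear injective resolution over $\overline{E}$, which is exactly the claimed equivalence.
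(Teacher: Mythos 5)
Your proof is correct, but it takes a genuinely different route from the paper's. The paper argues directly on the injective side: it applies $\Hom_E(E/(v),\cdot)$ to the minimal graded injective resolution of $M$, uses Proposition \ref{TorExt and Cartan} together with Corollary \ref{reg over hom or cohom} (vanishing of Cartan cohomology for the regular element $v$) to see that the resulting complex is exact, and computes $\Hom_E(E/(v),E(n-j))\cong (0:_Ev)(n-j)\cong (E/(v))(n-1-j)$ to conclude the stronger statement $\mu^{E/(v)}_{i,j}(vM)=\mu^E_{i,j}(M)$, from which $d$-linearity over both rings is immediate. You instead dualize twice, reducing the injective statement to its projective analogue (which the paper states without proof just before the lemma) via Theorem \ref{regdual}, and then translate back over $\overline{E}=E/(v)$ using Corollary \ref{dualmodreg} and the change-of-rings identity $N^{*}\cong N^{\star}(-1)$; your shift bookkeeping, which is indeed the delicate point, checks out at every step, including the final identification $(M^{*}/vM^{*})^{\star}\cong (vM)(1)$ and the twist $vM\cong (E/(J+(v)))(-1)$ in the ``in particular'' part. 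Two remarks on what each approach costs and buys. First, your appeal to Corollary \ref{dualmodreg} with $M$ replaced by $M^{*}$ silently uses the double-duality $M^{**}\cong M$ (both to verify that $v$ is $(M^{*})^{*}$-regular and to identify $vM^{**}$ with $vM$); this is standard since $E$ is self-injective and is used implicitly elsewhere in the paper (e.g.\ Example \ref{ex face ring}), but it deserves a sentence. Second, the projective analogue you invoke is itself proved by the exact dual of the paper's argument (tensor the minimal free resolution with $E/(v)$ and use vanishing of Cartan homology), so your proof relocates rather than avoids the homological core; on the other hand it is a clean illustration of how Theorem \ref{regdual} lets one pass freely between the projective and injective theories, and with a little extra care each link in your chain preserves the full graded Betti/Bass data, so you recover the paper's equality of Bass numbers $\mu^{E/(v)}_{i,j}(vM)=\mu^E_{i,j}(M)$ as well.
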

\begin{proof}
Let
$$
I^{\pnt}:\qquad 0\longrightarrow \Dirsum_{j\in\Z}E(n-j)^{\mu_{0,j}(M)}\longrightarrow \Dirsum_{j\in\Z}E(n-j)^{\mu_{1,j}(M)} \longrightarrow \ldots
$$
be the minimal graded injective resolution of $M$ over $E$. We claim that
$\Hom_E(E/(v),I^{\pnt})$ is the minimal graded injective resolution of
$\Hom_E(E/(v),M)\cong 0:_Mv=vM$ over $E/(v)$ with the same ranks and
degree shifts, i.e. $\mu^{E/(v)}_{i,j}(vM)=\mu^E_{i,j}(M)$. From
this the claim follows.

The homology of $\Hom_E(E/(v),I^{\pnt})$ is isomorphic to
the Cartan
cohomology $H^i(v;M)$ of $M$ with respect to $v$ by Proposition
\ref{TorExt and Cartan}. As $v$ is $M$-regular, Corollary \ref{reg
over hom or cohom} implies that $H^i(v;M)=0$ for $i>0$. So this is
indeed a resolution of $\Hom_E(E/(v),M)\cong vM$.

The modules in this resolution are
\begin{eqnarray*}
\Hom_E(E/(v),\Dirsum_{j\in\Z} E(n-j)^{\mu_{i,j}(M)})&\cong& \Dirsum_{j\in\Z}\Hom_E(E/(v),E)(n-j)^ {\mu_{i,j}(M)}\\
 &\cong& \Dirsum_{j\in\Z}(0:_E v)(n-j)^ {\mu_{i,j}(M)}\\
 &\cong& \Dirsum_{j\in\Z}(v)(n-j)^ {\mu_{i,j}(M)}\\
 &\cong& \Dirsum_{j\in\Z}E/(v)(n-1-j)^ {\mu_{i,j}(M)}.\\
\end{eqnarray*}
Thus $\Hom_E(E/(v),I^{\pnt})$ is an injective resolution of $vM$
with $\mu^{E/(v)}_{i,j}(vM)=\mu^E_{i,j}(M)$ (bear in mind that
$E/(v)$ is an exterior algebra with $n-1$ variables). The minimality
is preserved because an injective resolution over $E$ is minimal if and only
if all entries in the matrices of the maps are in the maximal ideal.
This property is not touched by applying $\Hom_E(E/(v),\cdot)$.

Now suppose $M=E/J$ for some graded ideal $J$. As just proved $E/J$
has a $d$-linear injective resolution over $E$ if and only if $v(E/J)$ has
one over $E/(v)$. The latter module is isomorphic to the $E/(v)$-module
$$v(E/J)=\big(J+(v)\big)/J\cong \big(E/(v)\big)\big/\big(J+(v)/(v)\big)(-1)$$
where the isomorphism is induced by the homomorphism $$E/(v)\to
\big(J+(v)/J\big)(+1), \ a+(v)\mapsto av+J.$$ Thus $v(E/J)$ has a
$d$-linear injective resolution if and only if
$\big(E/(v)\big)\big/\big(J+(v)/(v)\big)$ has a $(d-1)$-linear
injective resolution.
\end{proof}

Recall that for $0\neq M\in\Mcc$ the number
$$
\reg M =\max\{j-i: \Tor^E_i(M,K)_{j}\not=0\}
$$
is the regularity of $M$.
The regularity of $M$ is bounded by $d(M)=\max\{i:M_i\not=0\}$ which
can be seen when computing $\Tor^E_i(M,K)$ via the Cartan complex.
For a graded ideal $0\neq J\subset E$ there
is the relationship $\reg J=\reg E/J+1$. Reducing modulo a regular
element $v$ does not change the regularity because the minimal
graded free resolution of $M/vM$ over $E/(v)$
has the same ranks and shifts as the minimal graded free
resolution of $M$ over $E$.
For quotient rings with linear injective resolution there is a nice
formula for the regularity.

\begin{thm}\label{reg+depth=d}
Let $|K|=\infty$ and $E/J$ have a $d$-linear injective resolution.
Then
$$
\reg  E/J+\depth  E/J=d.
$$
\end{thm}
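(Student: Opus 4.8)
The plan is to prove the identity by induction on $s=\depth E/J$, using Lemma \ref{lin inj mod reg} to peel off one regular element at a time; throughout I take the inductive hypothesis to be the statement for arbitrary ideals in exterior algebras on fewer variables and of smaller depth.

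For the inductive step assume $s\ge 1$. Since $|K|=\infty$ there is an $E/J$-regular element $v\in E_1$. Put $\bar E=E/(v)$, an exterior algebra in $n-1$ variables, and $\bar M=E/(J+(v))=\bar E/\bar J$ with $\bar J=(J+(v))/(v)$. By Lemma \ref{lin inj mod reg} the module $\bar M$ has a $(d-1)$-linear injective resolution over $\bar E$, while $\depth_{\bar E}\bar M=s-1$ because $v$ is part of a maximal regular sequence. Reducing modulo the regular element $v$ does not change the regularity, since the minimal free resolution of $\bar M$ over $\bar E$ has the same ranks and shifts as that of $E/J$ over $E$; hence $\reg\bar M=\reg E/J$. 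Applying the induction hypothesis to $\bar M$ gives $\reg\bar M+\depth\bar M=d-1$, and therefore $\reg E/J+s=d$.

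It remains to treat the base case $s=0$, and here $\reg E/J\le d(E/J)=d$ is immediate, since $\reg$ is bounded by $d(M)$ and $d(E/J)=d$ for a module with $d$-linear injective resolution. The content is the reverse inequality $\reg E/J\ge d$. First I would record the key consequence of the $d$-linear injective resolution: $\mu_{0,j}(E/J)=\dim_K\Soc(E/J)_j=0$ for $j\ne d$, so the socle is concentrated in top degree, $\Soc(E/J)=(E/J)_d\cong K(-d)^{\mu_0}$ with $\mu_0=\dim_K(E/J)_d\ge 1$. Writing $M=E/J$, $A=\Soc M$ and $B=M/A$, the submodule $A$ sits in top degree, so $B$ lives in degrees $\le d-1$, whence $\reg B\le d(B)\le d-1$ and in particular $\Tor_i^E(B,K)_{i+d}=0$ for all $i$. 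Feeding $0\to A\to M\to B\to 0$ into the long exact $\Tor$ sequence and reading off internal degree $i+d$ identifies the top Betti numbers of $M$ as a cokernel,
\[
\Tor_i^E(M,K)_{i+d}=\Coker\bigl(\Tor_{i+1}^E(B,K)_{i+d}\longrightarrow \Tor_i^E(A,K)_{i+d}\bigr).
\]

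The hard part will be to show this cokernel is nonzero for some $i$; a bare dimension count fails because $\dim_K(E/J)_{d-1}$ may exceed $\mu_0$. I would resolve this through the Koszul-dual module structure on Tor. Recall (Koszul duality/BGG) that $\Tor^E_\bullet(N,K)=H_\bullet(e_1,\dots,e_n;N)$ is a finitely generated graded module over the polynomial ring $S=K[y_1,\dots,y_n]$, where each $y_k$ raises homological and internal degree by one and thus preserves the strands $\Dirsum_i\Tor_i^E(N,K)_{i+c}$. The long exact sequence, hence the displayed map, is $S$-linear, and assembling over $i$ exhibits the top strand of $M$ as the cokernel of an $S$-linear map whose target is all of $\Tor^E_\bullet(A,K)\cong\Tor^E_\bullet(K,K)^{\mu_0}$. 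Now $\Tor^E_\bullet(K,K)$ is the divided power algebra $\Gamma$, which is the graded dual $S^\vee$ of $S$: it is an Artinian $S$-module of infinite length, hence is not finitely generated over $S$. The source, being a strand of the finitely generated $S$-module $\Tor^E_\bullet(B,K)$, is finitely generated over the Noetherian ring $S$. A finitely generated module cannot surject onto a non-finitely-generated one, so the map is not surjective and its cokernel is nonzero; thus $\Tor_i^E(M,K)_{i+d}\ne 0$ for some $i$ and $\reg E/J\ge d$. The main obstacle is exactly this base-case lower bound, and it is settled by the finiteness dichotomy $\Gamma\cong S^\vee$ (not finitely generated) versus $\Tor^E_\bullet(B,K)$ (finitely generated) — which is precisely the point where the $d$-linear injective resolution, via the concentration of the socle in degree $d$, is used in an essential way.
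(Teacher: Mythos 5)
Your inductive step (peeling off a regular element via Lemma \ref{lin inj mod reg} and using that reduction modulo a regular element changes neither the regularity nor the linearity degree by more than one) is exactly the paper's own reduction, and it is fine; likewise the upper bound $\reg E/J\leq d(E/J)=d$. The base case $s=0$ is where all the content lies, and your argument for the lower bound $\reg E/J\geq d$ has a fatal gap. The quickest symptom: nowhere in your base-case argument do you use $\depth E/J=0$. If the argument were valid, it would prove $\reg E/J\geq d$ for \emph{every} $E/J$ with a $d$-linear injective resolution, contradicting the theorem itself whenever the depth is positive. Concretely, take $E=K\langle e_1,e_2\rangle$ and $J=(e_1-e_2)$, the Orlik--Solomon ideal of $U_{1,2}$: then $E/J$ has a $1$-linear injective resolution, $\depth E/J=1$ and $\reg E/J=0<1=d$. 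In your notation $A=\Soc(E/J)=(E/J)_1\cong K(-1)$, $B\cong K$, and the long exact sequence forces the connecting map $\Tor_{i+1}^E(B,K)_{i+1}\cong\Gamma_{i+1}\to\Tor_i^E(A,K)_{i+1}\cong\Gamma_i$ to be \emph{surjective for every} $i$ --- precisely the situation your finiteness dichotomy claims is impossible.

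This example also pinpoints the broken lemma. Your claim that $\Tor^E_\bullet(N,K)$ is a finitely generated graded $S$-module is inconsistent with your own (correct) remark that $\Tor^E_\bullet(K,K)=\Gamma\cong S^\vee$ is not finitely generated: $K$ is a finitely generated $E$-module, and in the example above $B=K$, so ``the source is a strand of a finitely generated $S$-module'' fails outright. The correct BGG/Koszul statement runs the other way: $\Ext^\bullet_E(N,K)$ is a finitely generated graded $S$-module, and $\Tor^E_\bullet(N,K)$ is its graded $K$-dual, on which the $y_k$ act by \emph{lowering} homological and internal degree (strands are preserved, but the module is essentially never finitely generated, since every non-free $E$-module has infinite projective dimension). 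Thus both source and target of your assembled map are of the non-finitely-generated ``dual'' type, a surjection is perfectly possible, and no cokernel obstruction arises under any $S$-structure compatible with the long exact sequence. The paper gets the lower bound in the depth-zero case by a quite different mechanism, and it uses $\depth E/J=0$ essentially: after passing to $\gin(J)$ (which preserves regularity, depth and the linear injective resolution), depth zero gives $\cx E/J=n$, so Proposition \ref{cx} produces a minimal generator $u=e_Fe_n\in G(J)$; since $(E/J)^*$ has an $(n-d)$-linear projective resolution it is generated in degree $n-d$, which forces $|F|=d$, i.e.\ $J$ has a minimal generator of degree $d+1$ and hence $\reg E/J\geq d$. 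Any repair of your approach would have to inject the depth-zero hypothesis at exactly the point where you try to show the cokernel is nonzero.
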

\begin{proof}
At first assume $\depth  E/J=0$. Then $d=d(E/J)$ is an upper bound
for $\reg E/J $ and we want to show that both numbers are equal. In
\cite[Theorem 5.3]{AH} it is proved that $\reg E/J=\reg E/\gin(J)$.
Thus we may assume in addition that $J$ is strongly stable. Then by
\cite[Corollary 3.2]{AHH} the regularity of $J$ is
$$\reg J=\max\{\deg(u):u\in G(J)\}.$$
In particular, $J$ is  a monomial ideal such that it can be seen as
the face ideal of a simplicial complex $\Delta$, i.e.
$J=J_\Delta=(e_F: F\not\in\Delta)$. Then we have already seen that
$(E/J)^*\cong 0:_E J=J_{\Delta^*} $ is generated by all monomials $e_F$ with
$e_{F^c}\not\in J$ (cf. Example \ref{ex face ring}).

From $n=\cx  E/J+\depth  E/J=\cx  E/J=\cx J$ and Proposition \ref{cx}
follows the existence of a monomial $e_Fe_n\in G(J)$. We have
$e_Fe_i\in J$ for all $i=1,\ldots,n$ because $J$ is stable. But then
$e_{(F\cup\{i\})^c}=e_{F^c\setminus\{i\}}\not\in (E/J)^*$ for all
$i\not\in F$. As $e_F\not\in J$ (otherwise $e_Fe_n$ would not be a
minimal generator), the complement $e_{F^c}$ is in $(E/J)^*$ and
even a minimal generator. The ideal $(E/J)^*$ has an $(n-d)$-linear
projective resolution and is thus generated in degree $n-d$, so
$|F|=n-|F^c|=n-(n-d)=d.$

This means that there exists a minimal generator of $J$ of degree
$d+1$ which implies $\reg E/J=\reg J-1=d+1-1=d$.

Now suppose $\depth  E/J=s$. Reducing modulo a maximal regular
sequence $v_1,\ldots,v_s$ does not change the regularity, but
$E/J+(v_1,\ldots,v_s)$ has a $(d-s)$-linear injective resolution
over $E/(v_1,\ldots,v_s)$ by Lemma \ref{lin inj mod reg}. Then
$\reg(E/J+(v_1,\ldots,v_s))=d-s$ and so
$$\reg E/J=\reg(E/J+(v_1,\ldots,v_s))=d-s=d-\depth  E/J.$$
\end{proof}

\begin{rem}
Let $|K|=\infty$ and $0\neq J\subset E$ with $d$-linear injective
resolution. By \cite[Theorem 3.2]{AAH}
we have $\cx E/J=n-\depth E/J.$
As $d\leq n$ this proves that
$$\reg E/J\leq \cx E/J.$$
This
inequality is even true for general quotient rings $E/J$. For
arbitrary graded $E$-modules there is no such relation between the
regularity and the complexity since the first one is changed by
shifting while the other is invariant.
\end{rem}

For a graded ideal $J\subset E$  Eisenbud, Popescu and Yuzvinsky
characterize in \cite{EPY} the case when both $J$ has a linear
projective and $E/J$ a linear injective resolution over $E$. In
their proof they use the Bernstein-Gel'fand-Gel'fand-correspondence
between resolutions over $E$ and resolutions over the polynomial
ring in $n$ variables. We present a (partly) more direct
proof using generic initial ideals.

\begin{thm}\label{linear resolution}\cite[Theorem 3.4]{EPY}
Let $|K|=\infty$ and $0\neq J\subset E$ be a graded ideal. Then $J$ and
$(E/J)^*$ have linear projective resolutions if and only if $J$
reduces to a power of the maximal ideal modulo some (respectively
any) maximal $E/J$-regular sequence of linear forms of $E$.
\end{thm}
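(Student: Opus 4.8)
The plan is to establish the equivalence of the three conditions (i) $J$ and $(E/J)^*$ have linear projective resolutions, (ii) $J$ reduces to a power of $\mm$ modulo some maximal $E/J$-regular sequence of linear forms, and (iii) the same modulo any such sequence, by reducing everything to the case $\depth E/J=0$. First I would fix a maximal $E/J$-regular sequence $v_1,\dots,v_s$ of linear forms, which exists since $|K|=\infty$, set $\bar E=E/(v_1,\dots,v_s)$ and $\bar J=(J+(v_1,\dots,v_s))/(v_1,\dots,v_s)$, so that $\bar E/\bar J$ has depth $0$. The key point is that all relevant properties pass between $E/J$ and $\bar E/\bar J$: reduction modulo a regular linear form leaves the graded Betti numbers unchanged (same ranks and shifts), so $J$ has a linear projective resolution over $E$ if and only if $\bar J$ has one over $\bar E$; and by Lemma \ref{lin inj mod reg} the analogous statement holds for linear injective resolutions of $E/J$ and $\bar E/\bar J$. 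Since $M$ has a $d$-linear projective resolution exactly when $M^*$ has an $(n-d)$-linear injective resolution and $(E/J)^{**}\cong E/J$, the condition that $(E/J)^*$ has a linear projective resolution is equivalent to $E/J$ having a linear injective resolution; thus all of (i) transfers to $\bar E/\bar J$. Granting the depth-$0$ case, condition (i) will force $\bar J=\bar\mm^{t}$ for every choice of maximal regular sequence, giving (i)$\Rightarrow$(iii), while (ii)$\Rightarrow$(i) follows by reading the same equivalences backwards; as (iii)$\Rightarrow$(ii) is trivial, this yields the full statement.

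It then remains to settle the depth-$0$ case, i.e.\ to show that when $\depth E/J=0$ one has: $J$ and $(E/J)^*$ have linear projective resolutions if and only if $J=\mm^{t}$ for some $t$. For the ``if'' direction, a power $\mm^{t}$ is strongly stable and generated in a single degree, hence has a linear projective resolution by the remark following Lemma \ref{betti}; moreover a degree count using $(M^*)_i\cong(M_{n-i})^\vee$ shows $(E/\mm^{t})^*\cong 0:_E\mm^{t}=\mm^{\,n-t+1}$, which is again a power of $\mm$ and so also has a linear projective resolution.

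For the ``only if'' direction in depth $0$ — the heart of the argument, though short — I would proceed as follows. Since $J$ has, say, a $p$-linear projective resolution, it is generated in degree $p$ and $\reg J=p$, so $\reg E/J=p-1$. Since $(E/J)^*$ has a linear projective resolution, $E/J\cong(E/J)^{**}$ has a $d$-linear injective resolution for some $d$; the discussion in Section \ref{sect Cartan complex} then gives $d(E/J)=\max\{i:(E/J)_i\neq 0\}=d$, while Theorem \ref{reg+depth=d} (with $\depth E/J=0$) gives $\reg E/J=d$. Comparing the two expressions for $\reg E/J$ yields $d=p-1$, hence $(E/J)_i=0$ for all $i\geq p$, that is, $J_p=E_p$. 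As $J$ is generated in degree $p$, this forces $J=(E_p)=\mm^{p}$, as desired.

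The routine parts are the backward ``if'' computation and the depth-$0$ ``only if'' step, which Theorem \ref{reg+depth=d} renders essentially immediate once the right invariants are lined up. I expect the main obstacle to be the bookkeeping in the reduction to depth $0$: one must track the projective resolution of $J$ and the injective resolution of $E/J$ simultaneously through the regular sequence and align the ``some/any'' quantifiers, taking care that reduction modulo each regular form preserves both kinds of linearity and that the depth drops by exactly one at each step.
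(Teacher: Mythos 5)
Your proposal is correct, and its outer skeleton matches the paper's: both reduce to the case $\depth E/J=0$ (using that Betti numbers are unchanged modulo an $E/J$-regular linear form, and Lemma \ref{lin inj mod reg} for the injective side), and both settle the ``if'' direction by noting that $\mm^t$ is strongly stable and generated in one degree while $(E/\mm^t)^*\cong 0:_E\mm^t=\mm^{n-t+1}$. The genuine difference is your depth-zero ``only if'' step. The paper first passes to $\gin(J)$ --- verifying via \cite[Theorem 5.3]{AH}, Lemma \ref{dual gin} and \cite[Proposition 2.3]{HT} that linearity of both resolutions and depth zero survive, and via Hilbert series that $\gin(J)=\mm^t$ forces $J=\mm^t$ --- precisely so that it can exploit strong stability: it re-runs an argument from inside the proof of Theorem \ref{reg+depth=d} to produce a minimal generator of degree $d+1$ (whence $d=t-1$), and then invokes Proposition \ref{d(EJ)} to exhibit the generator $e_{n-t+1}\cdots e_n\in G(J)$, concluding $J=\mm^t$ by strong stability. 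You instead quote Theorem \ref{reg+depth=d} as a black box: depth zero gives $\reg E/J=d$, while a $p$-linear resolution of $J$ gives $\reg E/J=\reg J-1=p-1$, so $d=p-1$; combined with the Section \ref{sect Cartan complex} observation that a $d$-linear injective resolution forces $d(E/J)=d$, this yields $(E/J)_p=0$, i.e.\ $J_p=E_p$, and generation of $J$ in degree $p$ gives $J=E\cdot J_p=\mm^p$ outright. This is legitimate and non-circular (Theorem \ref{reg+depth=d} is established earlier without reference to the present theorem, and reflexivity $(E/J)^{**}\cong E/J$ is used freely elsewhere in the paper), and it eliminates the entire gin/strongly-stable detour, including the Hilbert-series step $\gin(J)=\mm^t\Rightarrow J=\mm^t$ and Proposition \ref{d(EJ)}, from this proof; the combinatorial machinery remains only inside the proof of Theorem \ref{reg+depth=d}, which you use at exactly the strength it provides. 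What the paper's longer route buys is the explicit extremal generator $e_{n-t+1}\cdots e_n$ and a self-contained reuse of its stable-ideal toolkit; what yours buys is brevity and the observation that the characterization is a formal consequence of $\reg E/J+\depth E/J=d$ together with $d(E/J)=d$.
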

\begin{proof}
At first we show that it is enough to consider the case $\depth
E/J=0$. Note that the
ideal $J$ has a linear projective resolution over $E$ if and only if
$J+(v_1,\ldots,v_s)/(v_1,\ldots,v_s)$ has a linear projective resolution over
$E/(v_1,\ldots,v_s)$. Furthermore Lemma \ref{lin inj mod reg} says
that $E/J$ has a linear injective resolution over $E$ if and only if
the $E/(v)$-module $E/J+(v)$ has a linear injective resolution for
some $E/J$-regular element $v$. Thus inductively $E/J$ has a linear
injective resolution over $E$ if and only if
$E/(J+(v_1,\ldots,v_s))$ has one over $E/(v_1,\ldots,v_s)$. All in
all we may indeed assume that $\depth E/J=0$.

The $t$-th power of the maximal ideal $\mm=(e_1,\ldots,e_n)$ has a
$t$-linear projective resolution because it is strongly stable and
generated in one degree (cf. Lemma \ref{betti}). For the same reason
$(E/\mm^t)^*\cong 0:_E\mm^t=\mm^{n-t+1}$ has a linear projective
resolution. Hence the ``if'' direction is proved.

Now it remains to show that if $J$ has a $t$-linear projective resolution,
$E/J$ has a $d$-linear injective resolution and $\depth  E/J=0$,
then $J=\mm^t$.

In a first step we will see that $J$ may be replaced by its generic
initial ideal. If $J$ has a $t$-linear projective resolution, its
regularity is obviously $t$. Then by \cite[Theorem 5.3]{AH} the
regularity of $\gin(J)$ is also $t$. As $\gin(J)$ is generated in
degree $\geq t$ this implies that $\gin(J)$ has a $t$-linear
resolution as well.

Generic initial ideals and duality commute by Lemma \ref{dual gin},
i.e.
$$\gin((E/J)^*)\cong (E/\gin(J))^*.$$
Then a similar argument shows that $E/\gin(J)$ has a $d$-linear
injective resolution as well.
Finally
$$\depth  E/\gin(J)=\depth  E/J=0$$
by \cite[Proposition 2.3]{HT}. Altogether $\gin(J)$ satisfies the same
conditions as $J$. Assume that $\gin(J)=\mm^t$. The Hilbert
series of $J$ and $\gin(J)$ are the same which implies that in
this case $J=\mm^t$ as well because $J\subseteq \mm^t=\gin(J)$.

This allows us to replace $J$ by $\gin(J)$ so in the following we
assume that $J$ is strongly stable.

In the proof of Theorem \ref{reg+depth=d} was proved in the same
situation that there exists a minimal generator of $J$ of degree
$d+1$. As $J$ is generated in degree $t$, this implies $d=t-1$.

Finally, we will see that this equality implies $J=\mm^t$. As $E/J$
has a $d$-linear injective resolution, the number
$d(E/J)=\max\{i:(E/J)_i\not=0\}$ equals $d$.
Then, by Proposition \ref{d(EJ)},
$$\max\{\min(u):u\in G(J)\}=n-d=n-t+1.$$
Thus there exists a monomial $u\in G(J)$ of degree $t$ with
$\min(u)=n-t+1$. The only possibility for $u$ is
$u=e_{n-t+1}\cdots e_n$. Then every monomial of degree $t$ is in $J$
because $J$ is strongly stable and this implies $J=\mm^t$ since $J$
is generated in degree $t$.
\end{proof}

Let $v\in E_1$.
Recall that  $H_i(M,v)$ is the homology
of the complex
$$
(M,v): \qquad \ldots\longrightarrow M_{i-1}\stackrel{\cdot v }{\longrightarrow} M_{i}\stackrel{\cdot v }{\longrightarrow} M_{i+1}\longrightarrow \ldots
$$
In Section \ref{sect os algebras} we need the following technical result from  \cite{EPY}.

\begin{thm}\label{d enough}\cite[Theorem 4.1(b)]{EPY}
Let $M\in\Mcc$ have a $d$-linear injective resolution. Then
$H_i(M,v)=0$ for all $i\in\Z$ if and only if $H_d(M,v)=0$.
\end{thm}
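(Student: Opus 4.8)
The forward implication is trivial, so the whole content is the converse: assuming $M$ has a $d$-linear injective resolution and $H_d(M,v)=0$, I must show $H_p(M,v)=0$ for all $p$. First I would record, via the single-element Cartan complex $\cdots\longrightarrow M(-2)\stackrel{\cdot v}{\longrightarrow}M(-1)\stackrel{\cdot v}{\longrightarrow}M\longrightarrow 0$, that in internal degree $j$ one has $H_i(v;M)_j\cong H_{j-i}(M,v)$ for $i\geq 1$; hence by Corollary \ref{reg over hom or cohom} the vanishing of \emph{all} $H_p(M,v)$ is equivalent to $v$ being $M$-regular. Thus the theorem reduces to the assertion that $H_d(M,v)=0$ forces $v$ to be $M$-regular. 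Note also that a $d$-linear injective resolution forces $d(M)=d$, so $M_{d+1}=0$ and the top homology is simply $H_d(M,v)=M_d/vM_{d-1}$.

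The plan is to view all the modules $H_p(M,v)$ simultaneously as the fibre of one finite complex of free modules over the polynomial ring $S=K[y_1,\dots,y_n]=C^{\pnt}(e_1,\dots,e_n;E)$. Explicitly, I would form the complex $L$ with $L^i=S\otimes_K M_i$ and $S$-linear differential $m\mapsto\sum_{a=1}^n y_a\otimes e_a m$; it squares to zero because $y_ay_b$ is symmetric while $e_ae_b$ is alternating, its matrices have entries that are linear forms in the $y_a$, and it is a finite complex since $M$ is finite-dimensional. Writing $v=\sum_a v_ae_a$ and specialising $y_a\mapsto v_a$ turns the differential into multiplication by $v$, so the fibre $L\otimes_S S/\fp_v$ at the $K$-rational point with maximal ideal $\fp_v=(y_1-v_1,\dots,y_n-v_n)$ is exactly the complex $(M,v)$; consequently $H_p(M,v)\cong H^p(L\otimes_S S/\fp_v)$ for all $p$. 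The decisive structural input I would invoke is that the hypothesis $\mu_{i,j}(M)=0$ for $j\neq d-i$ (the meaning of a $d$-linear injective resolution) is equivalent to $L$ being a linear free resolution of the single graded $S$-module $P:=\Coker\bigl(S\otimes_KM_{d-1}\to S\otimes_K M_d\bigr)$, which appears as the cohomology of $L$ at the top spot $i=d$ (where $M_{d+1}=0$ is used).

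Granting this, the conclusion is a standard fibre computation over $S$. Since $L$ is a free resolution of $P$, its fibre at $\fp_v$ computes $\Tor^S_{\pnt}(P,S/\fp_v)$, and the term living at the top spot $i=d$ is $\Tor^S_0(P,S/\fp_v)=P\otimes_S S/\fp_v$; thus the hypothesis reads $P\otimes_S S/\fp_v\cong H_d(M,v)=0$. By Nakayama this gives $P_{\fp_v}=0$, i.e. $\fp_v\notin\Supp P$, whence $\Tor^S_i(P,S/\fp_v)=0$ for every $i$ and the fibre complex $L\otimes_S S/\fp_v$ is exact everywhere. Therefore $H_p(M,v)=0$ for all $p$, and by the first paragraph $v$ is $M$-regular, as required. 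The hard part is precisely the middle step, the dictionary between $d$-linearity of the injective resolution and $L$ being an honest free resolution of $P$; a purely $E$-theoretic induction cannot replace it, since reducing $M$ to its first cosyzygy $M'$ (via the mechanism of Lemma \ref{lin inj mod reg}) only reproduces the same homology shifted, $H_p(M,v)\cong H_{p-1}(M',v)$, and never diminishes it. The genuine finiteness that makes ``vanishing at the top spot implies vanishing everywhere'' work is the finiteness of the projective dimension of $P$ over $S$.
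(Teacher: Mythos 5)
The paper never proves this statement itself---it is imported verbatim from \cite[Theorem 4.1(b)]{EPY}---so your proposal can only be compared with the source's argument, and in fact it reconstructs that argument faithfully: your complex $L$ with $L^i=S\otimes_K M_i$ and differential $m\mapsto\sum_a y_a\otimes e_am$ is exactly the Bernstein--Gel'fand--Gel'fand functor on which \cite{EPY} bases its proof, and your ``decisive structural input''---that $\mu_{i,j}(M)=0$ for $j\neq d-i$ is equivalent to $L$ being a linear free $S$-resolution of $P=\Coker\bigl(S\otimes_K M_{d-1}\to S\otimes_K M_d\bigr)$---is precisely the BGG dictionary established in Section 2 of \cite{EPY} (building on \cite{EFS}), which is legitimate to invoke here since the paper cites the very same source for the whole theorem. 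The surrounding details all check out: the bookkeeping $H_i(v;M)_j\cong H_{j-i}(M,v)$ for $i\geq 1$ correctly identifies the simultaneous vanishing of all $H_p(M,v)$ with $M$-regularity of $v$ (matching the paper's remark before Lemma \ref{regseq}); a $d$-linear injective resolution does force $d(M)=d$, so $L$ ends at the spot $i=d$; the fibre $L\otimes_S S/\fp_v$ is the complex $(M,v)$; and the Nakayama/support step ($P\otimes_S S/\fp_v=0$ forces $\fp_v\notin\Supp P$, hence $\Tor^S_i(P,S/\fp_v)=0$ for all $i$) is sound because $P$ is finitely generated with finite free resolution $L$. Your closing remark---that a purely $E$-internal cosyzygy induction via Lemma \ref{lin inj mod reg} only shifts the homology and cannot yield the theorem, so the finiteness of $\pd_S P$ is the real engine---is accurate and is a fair explanation of why the detour through $S$ is needed; the only sense in which your proof is incomplete is that the invoked dictionary is itself a nontrivial theorem, but since the paper likewise takes the statement on faith from \cite{EPY}, this matches the paper's level of self-containment.
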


\section{Orlik-Solomon algebras}
\label{sect os algebras}

In this section we investigate homological properties of the
Orlik-Solomon algebra of a matroid. It is one example for
$E$-modules with linear injective resolutions. We determine the
depth and the regularity of the Orlik-Solomon algebra and
characterize the matroids whose Orlik-Solomon ideal has a linear
resolution. In the following the letter ``$M$'' denotes always a matroid
and never a module.

For the convenience of the reader we first collect all necessary
matroid notions that will be used in this section. They can be found
in introductory books on matroids, as for example \cite{Ox} or
\cite{We}.

Let $M$ be a non-empty matroid over $[n]=\{1,\ldots,n\}$, i.e. $M$
is a collection $\Ic$ of subsets of $[n]$, called \emph{independent
sets}, satisfying the following conditions:
\begin{enumerate}
 \item $\emptyset \in \Ic$.
 \item If $A\in\Ic$ and $B\subseteq A$, then $B\in\Ic$.
 \item If $A,B\in\Ic$ and $|A|<|B|$, then there exists an element $i\in B\setminus A$ such that $A\cup \{i\}\in\Ic$.
\end{enumerate}
The subsets of $[n]$ that are not in $\Ic$ are called
\emph{dependent}, minimal dependent sets are called \emph{circuits}.
The cardinality of maximal independent sets (called \emph{bases}) is
constant and denoted by $r(M)$, the \emph{rank} of $M$.

On $E$ exists a derivation $\partial : E\rightarrow E$ of degree
$-1$ which maps $e_i$ to 1 and obeys the Leibniz rule
$$
\partial(ab)=(\partial a)b+(-1)^{\deg a}a(\partial b)
$$
 for homogeneous $a\in E$ and all $b\in E$.  One easily checks
$$
\partial  e_S=(e_{i_1}-e_{i_0})\cdots(e_{i_m}-e_{i_0})=\sum_{j=0}^m(-1)^{j}e_{S\setminus \{i_j\}}
$$
for $S=\{i_0,\ldots,i_m\}$.
The \emph{Orlik-Solomon ideal} of $M$ is the ideal
$$J(M)=(\partial e_S:S\text{ is dependent})=(\partial e_C:C\text{ is a circuit}).$$
If there is no danger of confusion we simply write $J$ for $J(M)$.
The quotient ring $E/J$ is called the \emph{Orlik-Solomon algebra}
of $M$.

A circuit whose minimal element (with respect to a chosen order on
$[n]$) is deleted is called a \emph{broken circuit}. A set that does
not contain any broken circuit is called \emph{nbc}. Bj\"orner
proves in \cite[Theorem 7.10.2]{Bj} that the set of all $\nbc$-sets
is a $K$-linear basis of $E/J$.

A \emph{loop} is a subset $\{i\}$ that is dependent. If $M$ has a
loop $\{i\}$, then $\partial e_i=1$ is in $J$ and thus $E/J$ is
zero. Quite often it is enough to consider the case that $M$ is
\emph{simple}, i.e. $M$ has no loops and no non-trivial parallel
classes. A \emph{parallel class} is a maximal subset such that any
two distinct members $i,j$ are parallel, i.e. $\{i,j\}$ is a
circuit.

Note that if $M$ has no loops, a monomial $e_S$ is contained in $J$
if and only if the set $S$ is dependent (see for example \cite[Lemma
7.10.1]{Bj}).

\begin{ex}\label{ex umn}
The simplest matroids are the \emph{uniform matroids} $U_{m,n}$ with
$m\leq n$. They are matroids on $[n]$ such that all subsets of
$[n]$ of cardinality $\leq m$ are independent. The rank of $U_{m,n}$
is obviously $m$ and the circuits of $U_{m,n}$ are all subsets of
$[n]$ of cardinality $m+1$. Thus the Orlik-Solomon ideal
$J_{m,n}:=J(U_{m,n})$ of $U_{m,n}$ is the ideal $J_{m,n}=(\partial
e_A: A\subset[n], |A|=m+1)$. The relation
$$\partial e_S=\sum_{j=0}^k(-1)^j\partial e_{S\setminus\{i_j\}\cup\{1\}}$$
for $S=\{i_0,\ldots,i_k\}\subseteq [n]$ with $1\not\in S$ is easily
verified by a simple computation. Then we can rewrite the
Orlik-Solomon ideal as
$$J_{m,n}=(\partial e_A:A\subset[n], |A|=m+1, 1\in A).$$
\end{ex}

The rank of a subset $X\subseteq[n]$ is the rank of the matroid
$M|X$ which results from restricting $M$ on $X$. Then the closure
operator $\cl$ is defined as
$$
\cl(X)=\{i\in [n]:r(X\cup\{i\})=r(X)\}
$$
for $X\subseteq[n]$. If $\cl(X)=X$, then $X$ is called a \emph{flat}
(or a closed set). The by inclusion partially ordered set $L$ of all
flats of $M$ is a graded lattice. On $L$ we consider the
\emph{M\"obius function} which can be defined recursively by
$$
\mu(X,X)=1 \qquad \text{ and } \qquad \mu(X,Z)=-\sum_{X\leq Y<Z}\mu(X,Y) \text{ if } X<Z
$$
and the \emph{characteristic polynomial}
$$p(L;t)=\sum_{X\in L}\mu(\emptyset,X)t^{r(M)-r(X)}.$$

The \emph{beta-invariant} $\beta(M)$ of a matroid $M$ was introduced
by Crapo in \cite{Cr} as
$$
\beta(M)=(-1)^{r(M)}\sum_{S\subseteq [n]}(-1)^{|S|}r(S)=(-1)^{r(M)}\sum_{X\in L}\mu(\emptyset,X)r(X).
$$

The M\"obius function, the characteristic polynomial and the
beta-invariant are considered in detail, e.g., in \cite{Za}.

The \emph{direct sum} of two matroids $M_1$ and $M_2$ on disjoint
ground sets $E_1$ and $E_2$ is the matroid $M_1\oplus M_2$ on the
ground set $E_1\cup E_2$ whose independent sets are the unions of an
independent set of $M_1$ and an independent set of $M_2$. The
circuits of $M_1\oplus M_2$ are those of $M_1$ and those of $M_2$.
The Hilbert series of the Orlik-Solomon algebra is multiplicative on
direct sums, i.e.
$$H(E/J(M_1\oplus M_2),t)=H(E/J(M_1),t)\cdot H(E/J(M_2),t).$$
This can be proved using the fact that the set of all $\nbc$-sets of
cardinality $k$ is a $K$-basis of $(E/J)_k$ and that the $\nbc$-sets
of $M_1\oplus M_2$ are the unions of an $\nbc$-set of $M_1$ and an
$\nbc$-set of $M_2$.

On a matroid $M$ exists the equivalence relation
$$
x \sim y \Leftrightarrow x=y \text{ or there is a circuit  which contains both } x \text{ and } y .
$$

The equivalence classes of this relation are called the
\emph{connected components} or, more briefly, \emph{components} of
$M$. They are disjoint subsets of the ground set and each circuit
contains only elements of one component. If $T_1,\ldots,T_k$ are the
components of $M$ then $M=M|T_1\oplus\dots\oplus M|T_k$. The matroid
$M$ is called \emph{connected} if it has only one connected
component.

The Orlik-Solomon algebra has a linear injective resolution, which
was first observed by Eisenbud, Popescu and Yuzvinsky in \cite{EPY} for
Orlik-Solomon algebras defined by hyperplane arrangements, although
their proof works for arbitrary Orlik-Solomon algebras as well.
For the convenience of the reader we present a compact proof.
\begin{thm}\label{linear inj}\cite[Theorem 1.1]{EPY}
Let $l=r(M)$ be the rank of the matroid $M$. Then the Orlik-Solomon
algebra $E/J$ of $M$ has an $l$-linear injective resolution.
\end{thm}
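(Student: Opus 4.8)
The plan is to show that the minimal graded injective resolution of $E/J$ has the linear shape recorded in Section~\ref{sect Cartan complex}, equivalently that the Bass numbers satisfy $\mu_{i,j}(E/J)=0$ whenever $j\neq l-i$. First I would fix the normalisation $d(E/J)=l$: the $\nbc$-sets form a $K$-basis of $E/J$ and are independent sets, so the largest among them are the $\nbc$-bases of cardinality $l=r(M)$; hence $(E/J)_i=0$ for $i>l$ while $(E/J)_l\neq 0$, and since the socle of a module with a $d$-linear injective resolution lives in degree $d$, the only possible value is $d=l$. Next I would clear away the reducible cases. If $M$ has a loop then $E/J=0$ and the statement is vacuous. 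If $c$ is a coloop then $J(M)=J(M\setminus c)$, the form $e_c$ is $E/J$-regular, and $E/(J+(e_c))\cong E/J(M\setminus c)$ over $E/(e_c)$; by the second part of Lemma~\ref{lin inj mod reg} the claim for $M$ is then equivalent to the claim for $M\setminus c$, whose rank is $l-1$, so an induction on $n$ removes coloops. Finally, since the $\nbc$-bases (and hence, by a K\"unneth computation over $E=E_{T_1}\otimes\dots\otimes E_{T_k}$, the injective resolutions) are multiplicative over the connected components $M=M|T_1\oplus\dots\oplus M|T_k$, while the ranks add, I may assume that $M$ is connected with $n\geq 2$ elements, in particular loopless and coloop-free.

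For the connected case I would rewrite the target homologically. By Proposition~\ref{TorExt and Cartan} one has $\Ext^i_E(K,E/J)\cong H^i(e_1,\dots,e_n;E/J)$, so $E/J$ has an $l$-linear injective resolution precisely when this Cartan cohomology is concentrated, in cohomological degree $i$, in internal degree $l-i$. It is convenient to keep the dual picture in parallel: by \cite[Proposition 5.2]{AHH} together with Theorem~\ref{regdual} the assertion is equivalent to $(E/J)^{*}\cong 0:_E J$ having an $(n-l)$-linear projective resolution, and on this projective side the machinery of Section~\ref{sect gin} applies. Indeed, by Lemma~\ref{dual gin} one has $\gin((E/J)^{*})\cong(E/\gin(J))^{*}$; since passing to a generic initial ideal preserves the Hilbert function and the regularity, while $0:_E J$ is generated in degrees $\geq n-l$, the linear-resolution property may be tested on the strongly stable ideal $\gin(J)$, where the relevant Betti and Bass numbers are combinatorially accessible (Lemma~\ref{betti}, Propositions~\ref{cx} and~\ref{d(EJ)}). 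This simplifies the data but does not yet supply the actual vanishing, which is where the matroid must enter.

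The decisive and hardest step is therefore the vanishing of the off-diagonal Cartan cohomology for a connected matroid. I would proceed by a second induction on $n$ using deletion and contraction: choosing an element $n$ on a circuit gives $r(M\setminus n)=l$ and $r(M/n)=l-1$, and I would feed the Orlik-Solomon (Brieskorn) relationship among $E/J(M)$, $E/J(M\setminus n)$ and $E/J(M/n)$ into the Cartan cochain complex so that the induction hypothesis on the two minors forces the resulting long exact sequence to split, degreewise, into the single linear strand (the bookkeeping $(l-1)-i$ versus $l-(i+1)$ kills the connecting maps). The main obstacle I expect is exactly this transport. The Orlik-Solomon deletion-contraction sequence is a sequence of graded vector spaces whose contraction map is the odd derivation $\iota_{e_n}$, not an $E$-module homomorphism; on small examples such as $U_{2,3}$ one checks that neither $e_n(E/J)$ nor $(E/J)/e_n(E/J)$ is the Orlik-Solomon algebra of a minor, so the sequence does not by itself induce a long exact sequence of $\Ext_E$-modules. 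The careful point is to run the induction on the Cartan cochain complexes, where $\iota_{e_n}$ carries the differential $\sum_i e_iy_i$ to a form in which the variable $y_n$ can be split off, and to verify that a loop created by contraction (when $n$ is parallel to another element) merely annihilates the corresponding term and does no harm. Once the off-strand Cartan cohomology is shown to vanish, its concentration in degree $l-i$ together with $d(E/J)=l$ yields the $l$-linear injective resolution.
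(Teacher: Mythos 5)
Your reductions (the normalisation $d(E/J)=l$ via nbc-bases, removal of loops and coloops, and the K\"unneth splitting over connected components) are plausible, though note one circularity hazard: the paper's Proposition \ref{ei regular}, which you implicitly lean on for regularity of $e_c$, is itself proved \emph{from} Theorem \ref{linear inj} via Theorem \ref{d enough}, so in your induction you must verify regularity of a coloop variable directly (easy here, since $E/J\cong (E'/J')\otimes_K K\langle e_c\rangle$). But all of this only postpones the theorem to the connected case, and there your argument has a genuine gap that you yourself name without closing: the deletion-contraction sequence relating $E/J(M)$, $E/J(M\setminus n)$ and $E/J(M/n)$ is a sequence of graded vector spaces whose maps are not $E$-linear (the deletion term is not even an $E$-module in a compatible way, and the contraction map is the odd derivation $\iota_{e_n}$), so it induces no long exact sequence in $\Ext_E(K,-)$ or in Cartan cohomology. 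Your proposed repair --- ``splitting off the variable $y_n$'' from the Cartan differential $\sum_i e_iy_i$ and checking loops do no harm --- is never constructed; there is no demonstration that the resulting complexes are exact, that the degree bookkeeping kills connecting maps, or even that the relevant triangle exists. Since this is, by your own account, the decisive step, the proof is incomplete.

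Ironically, your second paragraph passes within one step of the paper's actual argument and then turns away. The paper does not use deletion-contraction or connectivity at all: it observes that the initial ideal $\ini(J)$ in the reverse lexicographic order is precisely the face ideal $J_\Gamma$ of the broken circuit complex $\Gamma$ (implicit in \cite[Theorem 3.3]{DY}); Bj\"orner's theorem \cite[Theorem 7.4.3]{Bj} says $\Gamma$ is shellable, hence Cohen-Macaulay, so $E/\ini(J)$ has a linear injective resolution by the equivalence of Example \ref{ex face ring}; and then Corollary \ref{mu gin}, $\mu_{i,j}(E/J)\leq\mu_{i,j}(E/\ini(J))$, transfers the off-strand vanishing of Bass numbers from the initial ideal to $E/J$, with $d(E/J)=l$ pinning the strand exactly as in your first paragraph. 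You invoked Corollary \ref{mu gin} and the passage to (generic) initial ideals but concluded that ``this does not yet supply the actual vanishing, which is where the matroid must enter'' --- in fact the matroid enters exactly there, through shellability of the nbc-complex, and no further homological machinery is needed. To salvage your write-up, replace the entire connected-case induction by this one observation; the deletion-contraction scheme, as sketched, cannot be completed without constructing $E$-module structures that do not exist.
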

\begin{proof}
Let $\Gamma$ be the simplicial complex whose faces are the nbc-sets
of $M$.  The face ideal of $\Gamma$ is the ideal
$$
J_\Gamma=(e_A:A\not\in\Gamma)=(e_A: A \text{ is a broken circuit}).
$$
This ideal is just the initial ideal $\ini( J)$ of $J$ (this is
implicitly contained in the proof of \cite[Theorem 3.3]{DY}).

By \cite[Theorem 7.4.3]{Bj} the complex $\Gamma$ is shellable and
hence Cohen-Macaulay. So as in Example \ref{ex face ring} it follows
that $E/J_\Gamma=E/\ini(J)$ has a linear injective resolution.
Then Corollary \ref{mu gin} implies that $E/J$ has a linear
injective resolution, too.

Every subset of $[n]$ of cardinality
greater than $l$ is dependent and thus every monomial of degree
greater than $l$ is contained in $J$.  Hence
$d(E/J)=\max\{i:(E/J)_i\not=0\}\leq l$. There exists an independent
subset $A\subseteq [n]$ of cardinality $l$. Then $e_A\not\in J$ and
$d(E/J)=l$. So $E/J$ has an $l$-linear injective resolution as was observed in
Section \ref{sect Cartan complex}.
\end{proof}

Next we want to determine the depth of the Orlik-Solomon
algebra. We are able to find at least one $E/J$-regular element if
$M$ has no loops.

\begin{prop}\label{ei regular}
If the matroid $M$ has no loops, then the variable $e_i$ is
$E/J$-regular for all $i\in [n]$. In particular, $\depth  E/J\geq
1$.
\end{prop}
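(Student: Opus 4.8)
The plan is to combine the two main inputs already available, namely Theorem~\ref{linear inj} and Theorem~\ref{d enough}, to reduce the regularity of $e_i$ to a single vanishing statement in top degree. By Theorem~\ref{linear inj} the algebra $E/J$ has an $l$-linear injective resolution with $l=r(M)$, and its proof shows in addition that $d(E/J)=l$, so that $(E/J)_j=0$ for $j>l$. Recall from the discussion preceding Lemma~\ref{regseq} that $e_i$ is $E/J$-regular if and only if $H_j(E/J,e_i)=0$ for all $j$, and that Theorem~\ref{d enough} guarantees this as soon as $H_l(E/J,e_i)=0$. Since $(E/J)_{l+1}=0$, the complex $(E/J,e_i)$ has kernel $\ker\big((E/J)_l\stackrel{\cdot e_i}{\longrightarrow}(E/J)_{l+1}\big)=(E/J)_l$, so that
$$
H_l(E/J,e_i)=(E/J)_l\big/\,e_i(E/J)_{l-1}.
$$
Thus I would reduce the whole statement to showing that multiplication by $e_i$ maps $(E/J)_{l-1}$ onto $(E/J)_l$.

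The key computation I would then carry out uses fundamental circuits. Since $M$ is loopless, a monomial $e_S$ lies in $J$ exactly when $S$ is dependent, so $(E/J)_l$ is spanned by the classes $\overline{e_A}$ with $A$ a basis of $M$. If $i\in A$, then $\overline{e_A}=\pm\, e_i\,\overline{e_{A\setminus i}}$ already lies in $e_i(E/J)_{l-1}$, because $A\setminus i$ is independent. If $i\notin A$, then $A\cup\{i\}$ is dependent and contains a unique circuit $C$, the fundamental circuit of $i$ with respect to $A$, and necessarily $i\in C$; write $C=\{i\}\cup C'$ with $C'\subseteq A$. As $\partial e_C\in J$, the element $(\partial e_C)\,e_{A\setminus C'}$ vanishes in $E/J$. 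Expanding $\partial e_C=\sum_{c\in C}\pm\, e_{C\setminus c}$ and multiplying by $e_{A\setminus C'}$, the term $c=i$ contributes $\pm\, e_{C'}\,e_{A\setminus C'}=\pm\,\overline{e_A}$, while each term $c=j\in C'$ contributes $\pm\, e_i\,\overline{e_{A\setminus j}}$ with $A\setminus j$ independent of size $l-1$. Hence
$$
0=(\partial e_C)\,e_{A\setminus C'}=\pm\,\overline{e_A}+e_i\Big(\sum_{j\in C'}\pm\,\overline{e_{A\setminus j}}\Big)\quad\text{in } E/J,
$$
so $\overline{e_A}\in e_i(E/J)_{l-1}$. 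Since the $\overline{e_A}$ span $(E/J)_l$, this gives $e_i(E/J)_{l-1}=(E/J)_l$, hence $H_l(E/J,e_i)=0$, and therefore $e_i$ is $E/J$-regular.

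For the final clause, $\depth E/J\geq 1$ follows immediately: the quotient $(E/J)/e_i(E/J)$ is nonzero because $e_i(E/J)$ is concentrated in positive degrees and so the quotient still equals $K$ in degree $0$. The one place where genuine content enters is the top-degree surjectivity of $\cdot\,e_i$, and the main obstacle is precisely verifying that the fundamental-circuit relation $\partial e_C$ reproduces $\overline{e_A}$ modulo $e_i(E/J)_{l-1}$; everything else is a formal reduction via Theorems~\ref{linear inj} and~\ref{d enough}. I would keep the sign bookkeeping implicit, since only the fact that the $c=i$ term appears with nonzero coefficient and every other term is divisible by $e_i$ is needed.
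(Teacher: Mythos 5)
Your proof is correct and follows essentially the same route as the paper: both reduce, via Theorem~\ref{linear inj} and Theorem~\ref{d enough}, to showing that multiplication by $e_i$ maps $(E/J)_{l-1}$ onto $(E/J)_l$ (equivalently, that annihilator and image of $e_i$ agree in top degree $l$), and then verify this using an Orlik--Solomon relation that expresses $\overline{e_A}$, for a basis $A$ with $i\notin A$, as $e_i$ times a class of degree $l-1$. The only cosmetic difference is that you pass to the fundamental circuit $C\subseteq A\cup\{i\}$ and multiply $\partial e_C$ by $e_{A\setminus C'}$, whereas the paper applies $\partial$ directly to the dependent set $T\cup\{i\}$ (legitimate since $J$ is generated by $\partial e_S$ over \emph{all} dependent sets $S$, not just circuits), which shortens the computation but yields the same identity.
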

\begin{proof}
By Theorem \ref{d enough} and  Theorem \ref{linear inj} it is enough to
show that the annihilator of $e_i$ in $E/J$ and the ideal
$(\bar{e_i})=e_i(E/J)$ in $E/J$ coincide in degree $l$.

Every set of cardinality $l+1$ is dependent and therefore every
monomial of degree $l+1$ is contained in $J$ whence $(E/J)_{l+1}=0$.
So every element in $E/J$ of degree $l$ is annihilated by $e_i$.

Now
let $T$ be an independent set of cardinality $l$ that does not
contain $i$. Then $T\cup\{i\}$ is dependent and thus $\partial
e_{T\cup\{i\}}\in J$. Arrange $T\cup\{i\}$ such that $i$ is the
first element. Then in $E/J$ there is the relation
$$
\overline{e_T}=\overline{e_T}-\overline{\partial e_{T\cup\{i\}}}=\overline{e_T}-\overline{e_T+(\ldots)e_i}=(\ldots)\overline{e_i}.
$$
So the residue class of every monomial of degree $l$ is in the ideal
generated by $\overline{e_i}$, which shows that the annihilator and
the ideal $(\overline{e_i})$ coincide in degree $l$. This shows that
$e_i$ is $E/J$-regular and thus the depth of $E/J$ is at least 1.
\end{proof}

The matroids $M$ whose corresponding depth is exactly 1 can be
characterized by their \emph{beta-invariant} $\beta(M)$.

\begin{thm}\label{depth}
If $|K|=\infty$ and $M$ has no loops, then the depth of the Orlik-Solomon
algebra $E/J$ equals 1 if and only if $\beta(M)\not=0$.
\end{thm}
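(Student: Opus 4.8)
The plan is to read the depth off from the order of vanishing of the Hilbert series at $t=-1$, and then to match that order with the (non)vanishing of $\beta(M)$. First I would invoke Theorem \ref{linear inj} to know that $E/J$ has an $l$-linear injective resolution for $l=r(M)$, so that Theorem \ref{hilbert series} applies: writing $s=\depth E/J$, there is a polynomial $Q(t)$ with $Q(-1)\neq 0$ and $H(E/J,t)=Q(t)(1+t)^{s}$. Thus $s$ is exactly the multiplicity of $t=-1$ as a root of $H(E/J,t)$. By Proposition \ref{ei regular} we already know $s\geq 1$, so $(1+t)\mid H(E/J,t)$, i.e. $H(E/J,-1)=0$. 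Consequently $\depth E/J=1$ if and only if $t=-1$ is a simple root, that is, if and only if $H'(E/J,-1)\neq 0$. This reduces the theorem to the single numerical criterion $H'(E/J,-1)\neq 0 \Leftrightarrow \beta(M)\neq 0$.

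To verify this criterion I would use the $\nbc$-basis to express the Hilbert series through M\"obius numbers. Since $\dim_K(E/J)_k$ equals the number of $\nbc$-sets of cardinality $k$, which is the unsigned Whitney number $\sum_{r(X)=k}|\mu(\emptyset,X)|$, one gets $H(E/J,t)=\sum_{X\in L}|\mu(\emptyset,X)|\,t^{r(X)}=\sum_{X\in L}\mu(\emptyset,X)(-t)^{r(X)}$, using that the sign of $\mu(\emptyset,X)$ is $(-1)^{r(X)}$ on a geometric lattice. Differentiating and evaluating at $t=-1$ then yields $H'(E/J,-1)=-\sum_{X\in L}\mu(\emptyset,X)\,r(X)$, which by the defining formula of the beta-invariant equals $(-1)^{l+1}\beta(M)$. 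Hence $H'(E/J,-1)$ and $\beta(M)$ vanish simultaneously, finishing the argument. Equivalently one may route through the characteristic polynomial, via $H(E/J,t)=(-t)^{l}\,p(L;-1/t)$: the factor $(-t)^{l}$ is nonzero at $t=-1$, so the order of vanishing of $H(E/J,t)$ at $t=-1$ matches that of $p(L;s)$ at $s=1$, and since $\beta(M)=(-1)^{l-1}p'(L;1)$ the condition $s=1$ again translates to $\beta(M)\neq 0$.

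The step I expect to be most delicate is the sign bookkeeping in the identity $H'(E/J,-1)=(-1)^{l+1}\beta(M)$, i.e. correctly matching the grading of $E/J$ (in which a flat $X$ contributes in degree $r(X)$) with the definition of $\beta(M)$. This rests on the classical Orlik-Solomon description of the Hilbert series through M\"obius numbers, which I would take as standard background once the $\nbc$-basis is available rather than reprove here. Everything else is formal: Theorems \ref{linear inj} and \ref{hilbert series} together with Proposition \ref{ei regular} turn the depth into the multiplicity of the root $-1$ of a completely explicit polynomial, and the beta-invariant is precisely the quantity detecting whether that multiplicity exceeds $1$.
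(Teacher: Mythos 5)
Your proposal is correct and follows essentially the same route as the paper's proof: the depth is read off as the multiplicity of $t=-1$ in $H(E/J,t)$ via Theorems \ref{linear inj} and \ref{hilbert series}, Proposition \ref{ei regular} guarantees that this multiplicity is at least one, and the linear coefficient $H'(E/J,-1)=(-1)^{r(M)-1}\beta(M)$ is extracted from the M\"obius-theoretic expression for the Hilbert series exactly as in the paper's Taylor expansion at $-1$. The only superficial difference is that you derive that expression from the nbc-basis and the sign-alternation of the M\"obius function on a geometric lattice, whereas the paper quotes Bj\"orner's formula $H(E/J,t)=(-t)^{r(M)}p(L;-\frac{1}{t})$ directly -- a route you yourself note as an equivalent alternative.
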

\begin{proof}
Theorem \ref{hilbert series} shows that the depth of $E/J$ is the
maximal number $s$ such that the Hil\-bert se\-ries can be written
as $H(E/J,t)=(1+t)^sQ(t)$ for some $Q(t)\in\Z[t]$.

Bj\"orner proves in \cite[Corollary 7.10.3]{Bj} that
$$H(E/J,t)=(-t)^{r(M)}p(L;-\frac{1}{t}).$$
Replacing the characteristic polynomial $p(L;-\frac{1}{t})$ by its definition gives
$$H(E/J,t)=\sum_{X\in L}\mu(\emptyset,X)(-1)^{r(X)}t^{r(X)}.$$
Thus the Taylor expansion of $H(E/J,t)$ at $-1$ is
\begin{eqnarray*}
 H(E/J,t)& = & \sum_{X\in L}\mu(\emptyset,X)(-1)^{r(X)}r(X)(-1)^{r(X)-1}(1+t)+(1+t)^2(\ldots)\\
         & = & -\sum_{X\in L}\mu(\emptyset,X)r(X)(1+t)+(1+t)^2(\ldots)\\
         & = & (-1)^{r(M)-1}\beta(M)(1+t)+(1+t)^2(\ldots).\\
\end{eqnarray*}
Now one sees that $H(E/J,t)$ can be divided twice by $1+t$ if and
only if $\beta(M)=0$. Observe that $H(E/J,-1)=0$ because $1+t$
divides $H(E/J,t)$ at least once since $e_i$ is regular on $E/J$ by
the preceding lemma.
\end{proof}

Crapo \cite[Theorem II]{Cr} proved that $M$ is connected if and only
if $\beta(M)\not=0$ (see also Welsh \cite[Chapter 5.2]{We}). Thus
the above result says that if $M$ is connected, the depth of $E/J$
equals the number of components of $M$. This is true in general.

\begin{thm}\label{depth general}
Let $|K|=\infty$ and $M$ be a loopless matroid with $k$ components
and $J$ its Orlik-Solomon ideal. Then $\depth  E/J=k$.
\end{thm}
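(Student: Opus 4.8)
The plan is to compute $\depth E/J$ through the Hilbert series, using Theorem \ref{hilbert series} together with the multiplicativity of the Hilbert series on direct sums. Since $M$ is loopless, Theorem \ref{linear inj} guarantees that $E/J$ has a linear injective resolution, so Theorem \ref{hilbert series} applies. The crucial point is that this theorem pins down $\depth E/J$ \emph{exactly} as the multiplicity of $(1+t)$ in $H(E/J,t)$: the factorization $H(E/J,t)=(1+t)^sQ(t)$ with $Q(-1)\neq 0$ determines $s$ uniquely. Hence it suffices to show that $(1+t)^k$ divides $H(E/J,t)$ but $(1+t)^{k+1}$ does not.

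First I would decompose $M$ into its connected components, $M=M|T_1\oplus\dots\oplus M|T_k$. Each restriction $M|T_i$ is connected by definition of a component, and it inherits looplessness from $M$. By Crapo's theorem (quoted after Theorem \ref{depth}) a connected matroid satisfies $\beta(M|T_i)\neq 0$, and Theorem \ref{depth} then yields $\depth E/J(M|T_i)=1$ for each $i$.

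Applying Theorem \ref{hilbert series} to each component gives factorizations $H(E/J(M|T_i),t)=(1+t)Q_i(t)$ with $Q_i(-1)\neq 0$. Multiplicativity on direct sums (recorded earlier in this section) then produces
$$
H(E/J,t)=\prod_{i=1}^{k}H(E/J(M|T_i),t)=(1+t)^{k}\prod_{i=1}^{k}Q_i(t).
$$
Because each $Q_i(-1)\neq 0$, the polynomial $\prod_{i=1}^{k}Q_i(t)$ does not vanish at $t=-1$, so $(1+t)^{k+1}$ fails to divide $H(E/J,t)$. Matching this against the unique factorization of Theorem \ref{hilbert series} forces $s=k$, i.e.\ $\depth E/J=k$.

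The computation is largely bookkeeping once the depth-one statement for connected matroids (Theorem \ref{depth}) is available. The only step that genuinely must be handled carefully is the appeal to the uniqueness encoded in Theorem \ref{hilbert series}: it is the condition $Q(-1)\neq 0$ that upgrades the divisibility $(1+t)^k\mid H(E/J,t)$ from a mere lower bound $\depth E/J\geq k$ into the exact equality. Without this, the multiplicativity argument alone would only control one inequality.
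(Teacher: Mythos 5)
Your proof is correct and follows essentially the same route as the paper: decompose $M$ into connected components, use Crapo's theorem together with Theorem \ref{depth} to get depth one (hence a single factor of $1+t$ with $Q_i(-1)\neq 0$) for each component, multiply the Hilbert series, and read off the depth via Theorem \ref{hilbert series}. Your explicit remark that the condition $Q(-1)\neq 0$ in Theorem \ref{hilbert series} is what makes the multiplicity of $1+t$ determine the depth exactly is precisely the (implicit) final step of the paper's argument.
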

\begin{proof}
Let $M_1,\ldots,M_k$ be the matroids on the components of $M$, i.e.
$M=M_1\oplus\ldots\oplus M_k$ and let $J_i=J(M_i)$ be the
corresponding Orlik-Solomon ideals. Theorem \ref{hilbert series} and
Theorem \ref{depth} imply that their Hilbert series can be
written as
$$H(E/J_i,t)=Q_i(t)\cdot (1+t)$$
such that $Q_i(-1)\not=0$. The Hilbert series is multiplicative on direct sums, thus
$$H(E/J,t)=\prod_{i=1}^k \left( Q_i(t)\cdot(1+t)\right)=Q(t)\cdot (1+t)^k$$
with $Q(-1)\not=0$ and so $\depth E/J=k$.
\end{proof}

For Orlik-Solomon algebras of hyperplane arrangements this result
was already proved by Eisenbud, Popescu and Yuzvinsky. In \cite[Corollary
2.3]{EPY} they state that the codimension of the singular variety
(i.e. the set of all non-regular elements on the Orlik-Solomon
algebra) of the arrangement is the number of central factors in an
irreducible decomposition of the arrangement. This codimension is
exactly the depth of the Orlik-Solomon algebra as
Aramova, Avramov and Herzog showed in \cite[Theorem 3.1]{AAH}.

\begin{rem}\label{max reg seq}
Let $M$ be a loopless matroid with components $T_1,\ldots,T_k$ and
$M_i=M|T_i$. A ``canonical'' maximal regular sequence on $E/J$ can
be found as follows. For every component $T_j$  choose an element
$i_j\in T_j$. Then $e_{i_1},\ldots,e_{i_k}$ is an $E/J$-regular
sequence. As $E/J+(e_{i_1},\ldots,e_{i_{j-1}})$ has an
$(l-j+1)$-linear injective resolution over
$E/(e_{i_1},\ldots,e_{i_{j-1}})$ by Lemma \ref{lin inj mod reg}, it
is enough to prove that $e_{i_j}$ is regular on
$E/J+(e_{i_1},\ldots,e_{i_{j-1}})$ in degree $l-j+1$. Let $A$ be an
independent subset of $[n]\setminus\{i_1,\ldots,i_{j-1}\}$ with
$|A|=l-j+1$. Then $A=S_1\cup\ldots\cup S_k$ with $S_i\subseteq T_i$.
The rank of $M$ is the sum of the ranks of the $M_i$, i.e.
$l=r(M_1)+\ldots+r(M_k)$. So at most $j-1$ of the $S_i$ are not
bases of their matroid, which means that there exists a
$t\in\{1,\ldots,j\}$ such that $S_t\cup\{i_t\}$ is dependent in
$M_t$. Then $A\cup\{i_t\}$ is dependent in $M$. The same trick as in
the proof of Proposition \ref{ei regular} shows that $e_A\in
J+(e_{i_1},\ldots,e_{i_j})$.
\end{rem}

As we know now the depth, we can compute the regularity of the
Orlik-Solomon algebra as well.
\begin{cor}\label{reg=l-k}
Let $|K|=\infty$ and $M$ be a loopless matroid of rank $l$ with $k$ components. The regularity of its Orlik-Solomon algebra is
$$\reg E/J=l-k.$$
\end{cor}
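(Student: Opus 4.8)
The plan is to combine the two main results that have already been
established in this section, namely Theorem \ref{linear inj} and
Theorem \ref{depth general}, together with the general formula for the
regularity of a quotient ring with a linear injective resolution proved
in Theorem \ref{reg+depth=d}. I would start by recalling that since $M$
is a loopless matroid of rank $l$, Theorem \ref{linear inj} tells us
that the Orlik-Solomon algebra $E/J$ has an $l$-linear injective
resolution; that is, in the notation of Theorem \ref{reg+depth=d} we are
in the situation $d=l$.

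The key step is then to feed this into the regularity formula. Since
$|K|=\infty$ and $E/J$ has an $l$-linear injective resolution,
Theorem \ref{reg+depth=d} applies and gives
$$
\reg E/J + \depth E/J = l.
$$
It therefore only remains to substitute the value of the depth. By
Theorem \ref{depth general}, the hypothesis that $M$ is a loopless
matroid with $k$ components yields $\depth E/J = k$. Putting these two
equalities together gives $\reg E/J + k = l$, and hence
$$
\reg E/J = l - k,
$$
which is exactly the claimed formula.

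In this case there is no real obstacle: the corollary is essentially a
bookkeeping consequence of the heavy lifting already done earlier in the
section, so the only thing to verify is that the hypotheses of the two
cited results are met. Both Theorem \ref{reg+depth=d} and
Theorem \ref{depth general} require $|K|=\infty$, which is assumed in the
statement, and both the linearity of the injective resolution (with
$d=l$) and the loopless-matroid hypothesis are exactly what is available.
Thus the proof reduces to citing Theorem \ref{linear inj},
Theorem \ref{reg+depth=d}, and Theorem \ref{depth general} in that
order.
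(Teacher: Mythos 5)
Your proposal is correct and is exactly the paper's proof: the paper's own argument for this corollary is the one-line observation that it is an application of Theorem \ref{reg+depth=d}, with $d=l$ supplied by Theorem \ref{linear inj} and $\depth E/J=k$ by Theorem \ref{depth general}. Your write-up just makes this bookkeeping explicit, including the verification that the hypothesis $|K|=\infty$ is in force.
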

\begin{proof}
This is just an application of Theorem \ref{reg+depth=d}.
\end{proof}

\begin{ex}\label{ex umn 2}
We consider the uniform matroids $U_{m,n}$ and their Orlik-Solomon
ideals $J_{m,n}$.

If $m=0$ then every set is dependent. The circuits are all sets with
one element, in particular they are loops. Thus $U_{0,n}$ has rank 0
and $n$ components $U_{0,1}$. The Orlik-Solomon ideal is $J_{0,n}=E$.

If $m=n$ then every set is independent. There are no circuits hence
$J_{n,n}=0$. The rank of $U_{n,n}$ is $n$ and it has $n$ components
$U_{1,1}$. Thus $\depth  E/J=n$ and $\cx  E/J=0$. The regularity
is $\reg E/J=n-n=0$.

If $m\not=0,n$ then $U_{m,n}$ is connected. Thus $\depth  E/J=1$
and $\cx  E/J=n-1$. The rank is $m$ hence the regularity is
$\reg E/J=m-1$.
\end{ex}

We say that an $E$-module has \emph{linear relations} if it is
generated in one degree and the first syzygy module is generated in
degree one. Thus a linear projective resolution implies linear
relations.

\begin{thm}\label{linrelations}
Let $M$ be a simple matroid and have no singleton components. If the
Orlik-Solomon ideal $J$ has linear relations then $M$ is connected.
\end{thm}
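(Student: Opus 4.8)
The plan is to prove the contrapositive: assuming $M$ is not connected, I will show that $J$ does not have linear relations. Since $M$ is disconnected, I can partition its components into two non-empty groups and write $M=M_1\oplus M_2$, where $M_i$ is the restriction of $M$ to a ground set $T_i\subseteq[n]$ with $T_1\sqcup T_2=[n]$. Each $M_i$ is again simple with no singleton components. Because $M$ has no singleton components, every component has at least two elements, and because $M$ is simple its circuits have cardinality at least three; as each $M_i$ contains a component, it contains a circuit of size $\geq 3$. Writing $E_i$ for the exterior algebra on the variables indexed by $T_i$ and $J_i=J(M_i)\subseteq E_i$, this says $J_i\neq 0$ and that every minimal generator of $J_i$, being some $\partial e_{C}$ with $|C|\geq 3$, has degree $\geq 2$.

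The key structural input is that the Orlik--Solomon algebra of a direct sum is a graded tensor product. Since the circuits of $M=M_1\oplus M_2$ are exactly the circuits of $M_1$ together with those of $M_2$, we have $J=J_1E+J_2E$ and $E/J\cong (E_1/J_1)\otimes_K(E_2/J_2)$ as graded $E$-modules, where $E=E_1\otimes_K E_2$ is the graded tensor product. First I would take minimal graded free resolutions $P'_\bullet\to E_1/J_1$ over $E_1$ and $P''_\bullet\to E_2/J_2$ over $E_2$ and form the complex $P'_\bullet\otimes_K P''_\bullet$ of free $E$-modules. As $K$ is a field, a K\"unneth argument shows this complex resolves $E/J$, and its sign-twisted differential $d'\otimes\mathrm{id}\pm\mathrm{id}\otimes d''$ has all entries in $\mm$ because the entries of $d'$ and $d''$ lie in the respective maximal ideals $\mm_1,\mm_2\subseteq\mm$; hence the tensor product is again minimal. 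This yields the K\"unneth formula for the graded Betti numbers
$$
\beta_{i,j}(E/J)=\sum_{\substack{i'+i''=i\\ j'+j''=j}}\beta_{i',j'}(E_1/J_1)\,\beta_{i'',j''}(E_2/J_2).
$$

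Now suppose, for a contradiction, that $J$ has linear relations; in particular $J$ is generated in a single degree $d$. Comparing the total-homological-degree-one part of the formula, and using $\beta_{0,0}(E_i/J_i)=1$ and $\beta_{0,j}(E_i/J_i)=0$ for $j\neq 0$, gives $\beta_{1,j}(E/J)=\beta_{1,j}(E_1/J_1)+\beta_{1,j}(E_2/J_2)$; hence $J_1$ and $J_2$ are both generated precisely in degree $d$, so $d\geq 2$ by the first paragraph and $\beta_{1,d}(E_i/J_i)\geq 1$ for $i=1,2$. Feeding the cross term $i'=i''=1$, $j'=j''=d$ into the formula gives
$$
\beta_{2,2d}(E/J)\geq \beta_{1,d}(E_1/J_1)\,\beta_{1,d}(E_2/J_2)\geq 1.
$$
Since the minimal resolution of $J$ is that of $E/J$ shifted by one, $\beta_{2,j}(E/J)=\beta_{1,j}(J)$, so this is a minimal first syzygy of $J$ in degree $2d$. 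But linear relations would force every minimal first syzygy of $J$ to sit in degree $d+1$, and $2d>d+1$ because $d\geq 2$. This contradiction shows that a disconnected $M$ cannot have an Orlik--Solomon ideal with linear relations, which is exactly the desired statement.

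I expect the main obstacle to be the second paragraph: one must make sure that the graded (sign-twisted) tensor product of the two minimal resolutions is genuinely a minimal free resolution of $E/J$ over the exterior algebra $E$, i.e.\ that the K\"unneth isomorphism holds over $E$ and that minimality survives the tensor differential. Once the K\"unneth formula for the graded Betti numbers is established, the numerical contradiction is immediate. Concretely, the offending non-linear syzygy is the Koszul-type relation $g_2\,\epsilon_1-(-1)^{d}\,g_1\,\epsilon_2$ between minimal generators $g_1=\partial e_{C_1}\in J_1$ and $g_2=\partial e_{C_2}\in J_2$ (with $\epsilon_i\mapsto g_i$), whose non-triviality in degree $2d$ is precisely what the tensor product structure guarantees.
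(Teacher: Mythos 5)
Your proof is correct, but it takes a genuinely different route from the paper. The paper argues directly: assuming $J$ has linear relations, it writes the minimal linear syzygies as $r_k=\sum_i v_{ik}f_i$ and shows, by analyzing which monomials can cancel in $\sum_i v_{ik}\partial e_{C_i}=0$, that all circuits $C_i$ occurring in a given $r_k$, and the supports of the coefficients $v_{ik}$, lie in a single component of $M$; it then observes that the Koszul relation $\partial e_{C_l}f_j\pm\partial e_{C_j}f_l$ between generators from two different components would force $\partial e_{C_l}=\sum_k g_kv_{jk}$ with all supports in the wrong component, a contradiction. You instead prove the contrapositive through the decomposition $E/J\cong (E_1/J_1)\otimes_K(E_2/J_2)$ and the K\"unneth formula for graded Betti numbers over the super tensor product $E=E_1\otimes_K E_2$, extracting $\beta_{2,2d}(E/J)\geq\beta_{1,d}(E_1/J_1)\,\beta_{1,d}(E_2/J_2)\geq 1$ with $2d>d+1$. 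Your supporting steps all check out: each component of size at least two contains a circuit, which has size at least three by simplicity, so $J_1,J_2\neq 0$ are generated in degrees $\geq 2$; the sign-twisted tensor product of the two minimal resolutions is acyclic by K\"unneth over the field $K$ and is minimal since its differential has entries in $\mm$; and comparing homological degree one correctly forces both ideals to be generated in the single degree $d\geq 2$. Reassuringly, simplicity enters your argument exactly where it must: if $d=1$ then $2d=d+1$ and the contradiction evaporates, consistent with the fact that $U_{1,n_1}\oplus U_{1,n_2}$ is disconnected yet its ideal has a $1$-linear resolution (Theorem \ref{characterisation}(ii)). As for what each approach buys: the paper's argument is elementary and self-contained within the syzygy bookkeeping it sets up, needing no outside machinery; yours requires establishing the K\"unneth-with-minimality fact over exterior algebras (including the Koszul sign conventions for modules in $\Mcc$), which the paper never develops, but in exchange it is conceptually shorter, identifies the offending syzygy structurally as the Koszul relation in degree $2d$ (essentially the same relation the paper exploits by hand), and proves the stronger quantitative statement that any such disconnected matroid whose ideal is generated in one degree $d\geq 2$ has a minimal first syzygy of $J$ in degree $2d$.
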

\begin{proof}
As $M$ is simple there exists no circuits with one or two elements,
so  $J$ is generated in degree  $m\geq 2$. Suppose $J=(\partial
e_{C_i}:i=1,\ldots,r)$ where $C_1,\ldots,C_r$ are circuits of $M$ of
cardinality $m+1$. Let $f_1,\ldots,f_r$ be the free generators of
$\Dirsum_{i=1}^rE(-m)$ such that $f_i$ is mapped to $\partial
e_{C_i}$ in the minimal graded free resolution of $J$. Then the
assumption says that the kernel of this map,
$$
U=\{\sum_{i=1}^ra_if_i:a_i\in E,\sum_{i=1}^ra_i\partial e_{C_i}=0\},
$$
is generated by elements $r_k=\sum_{i=1}^r v_{ik}f_i$ with
$v_{ik}\in E_1$. We may assume that the generators $r_k$ are
minimal, i.e. no sum $\sum_{i\in I'}v_{ik}f_i$ with
$I'\subsetneq\{1,\ldots,r\}$ is  in $U$. The
support of a linear form $v=\sum_{j=1}^n\alpha_je_j$ with
$\alpha_j\in K$ is the set $\supp(v)=\{j:\alpha_j\not=0\}$.

Under this conditions we claim that for each $k$ the elements of the circuits $C_i$
with $v_{ik}\not=0$ are in the same component of $M$, which we call the component of $C_i$,
and
consequently the support of $v_{ik}$ is in this component, too.

The monomials in $\sum_{i=1}^r v_{ik}\partial e_{C_i}=0$ have the
form $e_je_{C_i\setminus\{l\}}$ with $l\in C_i$ and
$j\in\supp(v_{ik})$. Because of the structure of $\partial e_{C_i}$
the monomials $e_je_{C_i\setminus\{l\}}$ cannot be zero for all
$l\in C_i$. If it is not zero, then there exists $C_p$, $q\in C_p$
and $t\in\supp(v_{pk})$ such that
$$
\{j\}\cup C_i\setminus\{l\}=\{t\}\cup C_p\setminus\{q\}.
$$
As $C_i$ and $C_p$ have at least three elements, it follows that
their intersection is not empty. This means that their elements
are both in the same component of
$M$. Then the minimality of $r_k$ implies that all elements of circuits $C_i$
with $v_{ik}\not=0$ belong to the same component.

Every $j\in\supp(v_{ik})$ must belong to some circuit $C_p$ with
$v_{pk}\not=0$, otherwise we see that
$$
e_j\sum_{\{i:j\in\supp(v_{ik})\}}\alpha_{ijk}\partial e_{C_i}=0
$$
when $v_{ik}=\sum_{j=1}^n\alpha_{ijk}e_j$.
This implies
$\sum_{\{i:j\in\supp(v_{ik})\}}\alpha_{ijk}\partial e_{C_i}\in(e_j)$
and thus this sum equals zero. But this is not possible
by our assumption on $U$. Hence all indices of the support of
$v_{ik}$ belong to the same component of $M$ as
the elements of the circuits $C_i$.

If $M$ is not connected and has no singleton components, there
exists at least two components and thus two circuits $C_i$ and $C_j$
whose intersection is empty. There is a trivial relation of degree
$2m$ between the generators corresponding to these two circuits,
namely $\partial e_{C_l}f_j\pm\partial e_{C_j}f_l.$ This relation
has a representation
$$
\partial e_{C_l}f_j\pm\partial e_{C_j}f_l=\sum_kg_kr_k=\sum_k\sum_{i=1}^rg_kv_{ik}f_i
$$
where $g_k\in E_{m-1}$. Then
$$
\partial e_{C_l}=\sum_kg_kv_{jk}
$$
since the $f_i$ are free generators. Each monomial in the sum on the
right side has a variable whose index is in the support of $v_{jk}$.
As shown above this support is contained in the component of $C_j$.
Thus $C_l$ contains elements of the component of $C_j$ which implies
that both circuits belong to the same component, a contradiction to
the choice of $C_l$ and $C_j$.
\end{proof}

Finally we classify all Orlik-Solomon ideals with linear projective
resolutions. Only joining or removing ``superfluous'' variables has
no effect on the linearity of $J$. This operation can be expressed using the
direct sum of matroids. A singleton  $\{i\}$ is a
component of a (loopless) matroid $M$ if and only if it is contained
in no circuit, or equivalently, is contained in each base. In this
case $i$ is called a \emph{coloop}. The matroid on $\{i\}$ is
$U_{1,1}$ if $i$ is a coloop, so we can write $M=M'\oplus U_{1,1}$
with $M'=M|_{[n]\setminus\{i\}}$. Let $E'=E/(e_i)$ and $J'$ the Orlik-Solomon ideal
of $M'$ in $E'$. Then $e_i$ is $E/J$-regular and  $J=J'E$ has a
linear resolution if and only if $J'$ has one. By iterating this
procedure we can split up $M$ in the direct sum of a matroid $M'$
which has no singleton components and a copy of $U_{f,f}$ where $f$
is the number of coloops of $M$ (note that $U_{f-1,f-1}\oplus
U_{1,1}=U_{f,f}$). Then $J(M')$ has a linear resolution if and only
if $J(M)$ has one.

\begin{thm}\label{characterisation}
Let $|K|=\infty$ and $M$ be a matroid on $[n]$. The Orlik-Solomon
ideal $J$ of $M$ has an $m$-linear projective resolution if and only
if $M$ satisfies one of the following three conditions:
\begin{enumerate}
 \item $M$ has a loop and $m=0$.
 \item $M$ has no loops, but non-trivial parallel classes, $m=1$ and $M=U_{1,n_1}\oplus\dots\oplus U_{1,n_k}\oplus U_{f,f}$ for some $k,f\geq 0$.
 \item $M$ is simple and $M=U_{m,n-f}\oplus U_{f,f}$ for some $0\leq f\leq n$.
 \end{enumerate}
\end{thm}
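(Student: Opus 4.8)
The statement characterizes exactly which matroids have an Orlik-Solomon ideal with a linear projective resolution. The plan is to reduce everything to the case of matroids with no singleton components (using the splitting $M=M'\oplus U_{f,f}$ described immediately before the theorem, which preserves linearity since adjoining a coloop corresponds to tensoring with a regular variable), and then to treat three mutually exclusive cases according to whether $M$ has a loop, has no loops but non-trivial parallel classes, or is simple. For the ``if'' direction I would compute directly in each case: if $M$ has a loop then $J=E$ and $E/J=0$, so $J$ has a trivial $0$-linear resolution. For the listed uniform/direct-sum matroids I would exploit that the Orlik-Solomon ideal of $U_{1,n}$ is generated in degree $1$ (its circuits are the pairs $\{i,j\}$) and that of $U_{m,n}$ is generated in degree $m$, and verify linearity of the resolution by passing to initial or generic initial ideals where the explicit Betti number formula of Lemma~\ref{betti} applies.

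\textbf{The ``only if'' direction and the role of connectedness.} The harder direction assumes $J$ has an $m$-linear resolution and must force $M$ into one of the three forms. First, a linear projective resolution forces $J$ to be generated in a single degree $m$, which already pins down the size of the circuits. Having reduced to the no-singleton-component case, I would invoke Theorem~\ref{linrelations}: if $M$ is simple with no singleton components and $J$ has linear relations (which a linear resolution certainly gives), then $M$ is \emph{connected}. This is the crucial structural input. Once connectedness is established, I would combine Theorem~\ref{depth general} (which gives $\depth E/J=k$, the number of components) with Corollary~\ref{reg=l-k} (which gives $\reg E/J=l-k$): for a connected matroid $k=1$, so $\reg E/J = l-1$, and since a linear resolution of $J$ means $\reg J = m$ and $\reg J = \reg E/J + 1$, we get $l=m$. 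Thus in the simple connected case the rank equals the degree of generation, which should force $M=U_{m,n}$ after accounting for coloops via the $U_{f,f}$ summand.

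\textbf{Dispatching the parallel-class case.} When $M$ has no loops but non-trivial parallel classes, I would argue that a linear resolution forces $m=1$ (since the parallel pairs give degree-$1$ circuit boundaries $\partial e_{\{i,j\}}=e_j-e_i$, and linearity forbids circuits of larger size contributing generators in a higher degree unless they are redundant). Then each parallel class together with the relation structure must realize $M$ as a direct sum of rank-one uniform matroids $U_{1,n_i}$, plus coloops giving the $U_{f,f}$ factor. The main obstacle I anticipate is precisely the step that rules out ``mixed'' behavior — showing that once $J$ is generated in one degree and the matroid is connected and simple, no circuits of differing sizes can coexist, so $M$ must be genuinely uniform rather than merely having all circuits of one size; here one likely needs that $U_{m,n}$ is the \emph{unique} connected simple matroid with all circuits of size $m+1$ and matching rank, which should follow from the rank computation $l=m$ combined with the circuit-exchange axioms. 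The uniform-matroid computations of Example~\ref{ex umn} and the formula $H(E/J(M_1\oplus M_2),t)=H(E/J(M_1),t)\cdot H(E/J(M_2),t)$ for Hilbert series on direct sums will be the main computational tools for verifying the explicit forms.
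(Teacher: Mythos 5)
Your proposal is correct in substance and shares the paper's skeleton---the same reduction to matroids without singleton components by splitting off $U_{f,f}$, the same trichotomy, and the same decisive use of Theorem~\ref{linrelations} to force connectedness in the simple case---but it replaces the paper's key mechanism by a regularity computation, and this is a genuinely different route. Where the paper invokes Theorem~\ref{linear resolution} (admissible because $E/J$ always has a linear injective resolution by Theorem~\ref{linear inj}) to reduce $J$ modulo the regular element $e_1$ to the power $\mm^m$ of the maximal ideal, and then lifts monomials to conclude that every $(m+1)$-subset is dependent, you instead combine Theorem~\ref{depth general} with Corollary~\ref{reg=l-k} to get $\reg J=\reg E/J+1=l-k+1=l$ for connected $M$, hence $l=m$; this works, and the ``obstacle'' you anticipate in fact dissolves without any circuit-exchange axioms: generation of $J$ in the single degree $m$ already excludes circuits of size at most $m$ (a circuit $C$ with $|C|\leq m$ gives $0\neq\partial e_C\in J_{|C|-1}$ with $|C|-1<m$), while $l=m$ makes every $(m+1)$-set dependent, so all sets of size at most $m$ are independent and all $(m+1)$-sets are circuits, i.e.\ $M=U_{m,n}$. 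Your gin-based verification of the ``if'' direction for case (iii) also goes through (Corollary~\ref{reg=l-k} gives $\reg J=m$; since $\gin(J)$ is strongly stable with the same regularity and no generators below degree $m$, it is generated exactly in degree $m$ and Lemma~\ref{betti} gives a linear resolution, which dominates the Betti numbers of $J$), whereas the paper again reduces to $\mm^m$ and quotes the easy half of Theorem~\ref{linear resolution}; your route buys independence from Theorem~\ref{linear resolution} altogether, at the cost of leaning on the regularity machinery of Theorem~\ref{reg+depth=d}. The one place your sketch is under-specified is the parallel-class case: to argue component-wise you must first show that each component ideal $J_j$ is itself generated by linear forms, which follows (as in the paper's proof) by expressing $\partial e_C$, for $C$ a circuit of the $j$-th component, in terms of the degree-one part of $J$ and applying the projection killing the variables of the other components; after that, your regularity argument applied to the connected loopless $M_j$ gives $l_j=1$ and hence $M_j=U_{1,n_j}$---a detail worth writing out, but not a gap.
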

\begin{proof}

First of all we will see that if $M$ satisfies one of the three
conditions then $J$ has a linear projective resolution:

(i) If $M$ has a loop $\{i\}$, then $\partial e_i=1\in J$ so $J$ is
the whole ring $E$ which has a linear resolution.

If $M$ satisfies (ii) then the circuits of $M$ are the circuits of
the $U_{1,n_i}$. Thus all circuits of $M$ have cardinality two which
means that $J$ is generated by linear forms $v_1,\ldots,v_s$ and has
the Cartan complex $C\mlpnt(v_1,\ldots,v_s;E)$ as a linear
resolution.

(iii) Following the remark preceding this theorem  we may
assume that $M$ has no singleton components, so we have $M=U_{m,n}$.
If $m=0$ or $m=n$ then the Orlik-Solomon ideal is $E$ or zero and
has a linear resolution. By Example \ref{ex umn 2} the matroid
$U_{m,n}$ is connected if
$m\not=0,n$. Hence it follows from Theorem \ref{depth} that
$\depth  E/J=1$. By Proposition \ref{ei regular} the variable $e_1$
is $E/J$-regular. The Orlik-Solomon ideal $J=J_{m,n}=(\partial e_A :
|A|=m+1, 1\in A)$ of $U_{m,n}$ was computed in Example \ref{ex umn}.
Then $J+(e_1)=(e_A: |A|=m)+(e_1)$ and thus $J$ reduces modulo $e_1$
to the $m$-th power of the maximal ideal in the exterior algebra
$E/(e_1)$ and hence has a linear projective resolution by Theorem
\ref{linear resolution}.

Now let $J$ have an $m$-linear projective resolution. If $M$ has a
loop, then this is a circuit of cardinality one whence $m=0$. Thus
$M$ satisfies (i).

Now we consider the case that $M$ is simple. As above we assume that
$M$ has no singleton components. So we have to show that
$M=U_{m,n}$. Theorem \ref{linrelations} implies that $M$ is
connected. Then $\depth  E/J=1$ by Theorem \ref{depth} and
$e_1$ is a maximal regular sequence on $E/J$ by Proposition \ref{ei
regular}. Reducing $J$ modulo $(e_1)$ gives the $m$-th power of the
maximal ideal of the exterior algebra $E/(e_1)$ by Theorem
\ref{linear resolution}.

Let $A\subseteq [n]$ with $1\in A$, $|A|=m+1$ and let $A'=A\setminus
\{1\}$. The degree of the residue class of $e_{A'}$ in $E/(e_1)$ is
$m$ and so $\overline{e_{A'}}\in J+(e_1)/(e_1)$. Thus there exists a
representation
$$ e_{A'}=f+ge_1 \qquad f\in J, g\in E.$$
Then
$$
e_A=\pm e_{A'}e_1=\pm fe_1\in J
$$
which is the case if and only if $A$ is dependent. So every subset
of cardinality $m+1$ containing $1$ is dependent.
An analogous argument for $i>1$ shows that every subset of cardinality
$m+1$ is dependent. No subset of cardinality $\leq m$ is dependent
because $J_j=0$ for $j<m$. Thus we conclude $M=U_{m,n}$.

Finally we assume that $M$ has no loops or singleton components, but
non-trivial parallel classes. Then there exists at least one circuit
with two elements. As $J$ is generated in degree $m$ this implies
$m=1$. Let $J_1,\ldots,J_k$ be the Orlik-Solomon ideals of the
components $M_1,\ldots,M_k$ of $M$, i.e. $J=J_1+\ldots+J_k$. Each
$J_j$ is generated by linear forms, because no $\partial e_C$ with
$C$ of one component can be represented by elements $\partial
e_{C_i}$ with $C_i$ of other components. Ideals generated by linear
forms have the Cartan complex with respect to these linear forms as
minimal graded free resolution and this is a linear resolution. Thus
$J_j$ has a linear resolution. It is the Orlik-Solomon ideal of the
connected loopless matroid $M_j$. Following the argumentation in the
preceding paragraph for simple matroids this implies $M_j=U_{1,n_j}$
with $n_j$ the cardinality of the $j$-th component of $M$ and
$M=\Dirsum_{j=1}^kU_{1,n_j}$.
\end{proof}

Since the powers of the maximal ideal of $E$ are strongly stable,
their minimal resolution and especially their Betti numbers are
known from \cite{AHH}. Also
Eisenbud, Fl\o{}ystad and Schreyer give in \cite[Section 5]{EFS} an
explicit description of the minimal graded free resolution of the power of
the maximal ideal using Schur functors. Their result gave the hint
how a ``nicer'' formula of the Betti numbers could look like.

\begin{prop}
The graded Betti numbers of $\mm^t$ are
$$\beta_{i,i+t}(\mm^t)=\binom{n+i}{t+i}\binom{t+i-1}{i}
\text{ and }
\beta_{i,i+j}(\mm^t)=0 \text{ for }j\not=t.
$$

\end{prop}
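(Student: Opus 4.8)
The plan is to apply the Betti number formula for stable ideals from Lemma \ref{betti}. Since $\mm^t$ is strongly stable and generated in the single degree $t$, its minimal generating set $G(\mm^t)$ consists precisely of all monomials $e_S$ with $|S|=t$, and $G(\mm^t)_j=\emptyset$ for every $j\neq t$. Feeding this into Lemma \ref{betti} immediately yields $\beta_{i,i+j}(\mm^t)=0$ for all $j\neq t$, which settles the second assertion with no further work. For the nontrivial degree $j=t$, Lemma \ref{betti} gives
$$
\beta_{i,i+t}(\mm^t)=\sum_{\substack{S\subseteq[n]\\ |S|=t}}\binom{\max(S)+i-1}{\max(S)-1}.
$$

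The next step is to reorganize this sum according to the value $k=\max(S)$. A $t$-subset $S$ with $\max(S)=k$ is obtained by adjoining $k$ to a $(t-1)$-subset of $\{1,\dots,k-1\}$, so there are exactly $\binom{k-1}{t-1}$ of them, and $k$ ranges from $t$ to $n$. Using the symmetry $\binom{k+i-1}{k-1}=\binom{k+i-1}{i}$, this reduces the proposition to the purely combinatorial identity
$$
\sum_{k=t}^{n}\binom{k-1}{t-1}\binom{k+i-1}{i}=\binom{n+i}{t+i}\binom{t+i-1}{i}.
$$

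The final, and only genuinely computational, step is to verify this identity. I would rewrite the product of the two binomial coefficients as a single ratio of factorials and then recognize a consecutive-integer product: since $(k+i-1)-(k-t)=t+i-1$, one has
$$
\binom{k-1}{t-1}\binom{k+i-1}{i}=\frac{1}{i!\,(t-1)!}\cdot\frac{(k+i-1)!}{(k-t)!}=\binom{t+i-1}{i}\binom{k+i-1}{t+i-1}.
$$
Pulling the constant factor $\binom{t+i-1}{i}$ out of the sum, the identity collapses to the hockey-stick summation
$$
\sum_{k=t}^{n}\binom{k+i-1}{t+i-1}=\binom{n+i}{t+i},
$$
which is the standard relation $\sum_{m=r}^{N}\binom{m}{r}=\binom{N+1}{r+1}$ applied with $r=t+i-1$ and $N=n+i-1$. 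The whole argument is formal, so I do not anticipate a real obstacle; the only care needed is the bookkeeping of factorials in the factorization that exposes the hockey-stick reduction.
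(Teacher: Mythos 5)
Your proposal is correct, and up to the final computation it is the paper's argument almost verbatim: both invoke Lemma \ref{betti} for the strongly stable ideal $\mm^t$, count the $t$-subsets with maximum $k$ to get $m_k(\mm^t)=\binom{k-1}{t-1}$, and thereby reduce the proposition to the identity $\sum_{k=t}^{n}\binom{k-1}{t-1}\binom{k+i-1}{k-1}=\binom{n+i}{t+i}\binom{t+i-1}{i}$; the vanishing for $j\neq t$, which you derive explicitly from $G(\mm^t)_j=\emptyset$, is left implicit in the paper (it is covered by the remark after Lemma \ref{betti} that a stable ideal generated in one degree has a linear resolution). Where you genuinely diverge is in verifying the identity. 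The paper proceeds by induction on $n$, with an induction step that applies Pascal's rule to $\binom{n+i+1}{t+i}$ and checks that the correction term matches the new summand; you instead evaluate the sum in closed form by the factorization $\binom{k-1}{t-1}\binom{k+i-1}{i}=\binom{t+i-1}{i}\binom{k+i-1}{t+i-1}$ --- which checks out, both sides being $(k+i-1)!/\bigl(i!\,(t-1)!\,(k-t)!\bigr)$ --- and then applying the hockey-stick summation $\sum_{m=r}^{N}\binom{m}{r}=\binom{N+1}{r+1}$ with $r=t+i-1$ and $N=n+i-1$, whose bounds match since $m=k+i-1$ runs from $t+i-1$ to $n+i-1$. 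Your route has the advantage of being non-inductive and structurally transparent: it explains why the constant factor $\binom{t+i-1}{i}$ splits off, rather than merely confirming a guessed closed form, at the cost of a little factorial bookkeeping that the paper's Pascal-rule induction avoids. Both arguments are complete.
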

\begin{proof}
There are $\binom{k-1}{t-1}$ monomials of degree $t$ whose highest
supporting variable is $e_k$, i.e. $m_k(\mm^t)=|\{u\in
G(\mm^t):\max(u)=k\}|=\binom{k-1}{t-1}$. Hence by Lemma
\ref{betti} we obtain
\begin{eqnarray*}
\beta_{i,i+t}(\mm^t)&=& \sum_{k=t}^n m_k(\mm^t)\binom{k+i-1}{k-1}= \sum_{k=t}^n\binom{k-1}{t-1}\binom{k+i-1}{k-1}.
\end{eqnarray*}
That this sums equals $\binom{n+i}{t+i}\binom{t+i-1}{i}$ can be seen
by an induction on $n$, where the induction step from $n$ to $n+1$
is the following:
\begin{eqnarray*}
 &&\sum_{k=t}^{n+1}\binom{k-1}{t-1}\binom{k+i-1}{k-1}\\
&=&\binom{n+i}{t+i}\binom{t+i-1}{i}+\binom{n}{t-1}\binom{n+i}{n}\\
&=&\left(\binom{n+i+1}{t+i}-\binom{n+i}{t+i-1}\right)\binom{t+i-1}{i}+\binom{n}{t-1}\binom{n+i}{n}\\
&=&\binom{n+i+1}{t+i}\binom{t+i-1}{i}.
\end{eqnarray*}
\end{proof}

Now we obtain:

\begin{thm}
\label{Betti_extreme_cases}
Let $M$ be a matroid and $J=J(M)$ be its Orlik-Solomon ideal.
\begin{enumerate}
\item If $M=U_{m,n-f}\oplus U_{f,f}$ for some $f\geq 0$, then $$\beta^E_i(J)=\binom{n-f-1+i}{m+i}\binom{m+i-1}{i}.$$
\item
If $M=U_{1,n_1}\oplus\dots\oplus U_{1,n_k}\oplus U_{f,f}$ for some $k,f\geq 0$, $n=f+\sum_{i=1}^kn_i$, then
$$\beta^E_i(J)=\binom{n-f-k+i}{i+1}.$$
\end{enumerate}
\end{thm}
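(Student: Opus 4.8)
The plan is to reduce each computation, by passing modulo regular linear forms, to an ideal whose total Betti numbers are already available: a power of the maximal ideal in case (i), treated by the preceding proposition, and an ideal generated by linearly independent linear forms in case (ii), resolved by a Cartan complex. The key tool is that reducing modulo an $E/J$-regular linear form $v$ leaves the total Betti numbers of the ideal unchanged: the minimal graded free resolution of $(E/J)/v(E/J)=E/(J+(v))$ over $E/(v)$ has the same ranks and shifts as that of $E/J$ over $E$, and since $\beta_i$ of an ideal is a shift of $\beta_{i+1}$ of the corresponding quotient, we get $\beta_i^{E/(v)}(\bar J)=\beta_i^E(J)$ for $\bar J=(J+(v))/(v)$. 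As we only track total Betti numbers, all degree shifts may be ignored.

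First I would dispose of the summand $U_{f,f}$. Its $f$ elements are coloops, hence singleton components of $M$, so by the discussion preceding Theorem \ref{characterisation} each corresponding variable is $E/J$-regular and $J$ is the extension of the Orlik-Solomon ideal of the complementary matroid. Reducing modulo these $f$ variables therefore preserves the total Betti numbers and replaces $E$ by the exterior algebra $E'$ on $n-f$ variables; it remains to compute $\beta_i^{E'}(J_{m,n-f})$ in case (i) and $\beta_i^{E'}(J(U_{1,n_1}\oplus\dots\oplus U_{1,n_k}))$ in case (ii).

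For case (i), the matroid $U_{m,n-f}$ is loopless (we may assume $1\le m\le n-f$, the cases $m=0$ and $m=n-f$ giving $J=E$ and $J=0$ being checked directly against the formula). By Proposition \ref{ei regular} the variable $e_1$ is regular on $E'/J_{m,n-f}$, and by the computation in the proof of Theorem \ref{characterisation}, using the description $J_{m,n-f}=(\partial e_A:|A|=m+1,\,1\in A)$ from Example \ref{ex umn}, the ideal $J_{m,n-f}$ reduces modulo $e_1$ to the power $\mm^m$ of the maximal ideal in the exterior algebra on $n-f-1$ variables. A further application of the invariance of total Betti numbers gives $\beta_i^{E'}(J_{m,n-f})=\beta_i(\mm^m)$ over this smaller algebra, whence the preceding proposition (with $n$ replaced by $n-f-1$ and $t$ by $m$) yields $\beta_i^E(J)=\binom{n-f-1+i}{m+i}\binom{m+i-1}{i}$.

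For case (ii), every circuit of $U_{1,n_1}\oplus\dots\oplus U_{1,n_k}$ has cardinality two, so each $\partial e_C$ is a difference $e_b-e_a$ of two variables; within the $j$-th parallel class these differences span the $(n_j-1)$-dimensional space of mean-zero combinations, and the $k$ disjoint ground sets make them independent across classes. Hence the ideal is minimally generated by $s=\sum_{j=1}^k(n_j-1)=n-f-k$ linearly independent linear forms $v_1,\dots,v_s$. Since these are independent, the Cartan complex $C\mlpnt(v_1,\dots,v_s;E')$ is the minimal graded free resolution of $E'/(v_1,\dots,v_s)$; truncating it resolves the ideal, so $\beta_i^E(J)$ equals the rank of $C_{i+1}$, i.e.\ the number of $a\in\N^s$ with $|a|=i+1$, which is $\binom{n-f-k+i}{i+1}$. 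I expect the only delicate points to be the bookkeeping that each reduction genuinely preserves the total Betti numbers of the \emph{ideal} rather than merely of the quotient, and the two identifications of the reduced ideal with $\mm^m$ and with $(v_1,\dots,v_s)$; the underlying binomial identities, including those in the preceding proposition, are routine.
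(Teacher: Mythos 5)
Your proposal is correct and follows essentially the same route as the paper: reduce modulo regular linear forms (the coloop variables, then $e_1$) using the invariance of Betti numbers under such reductions, identify the reduced ideal in case (i) with $\mm^m$ in the exterior algebra on $n-f-1$ variables, and invoke the preceding proposition. The only, immaterial, difference is in case (ii): the paper reduces further --- one variable per component, as in Remark \ref{max reg seq} --- to the maximal ideal on $n-f-k$ variables and applies the proposition with $t=1$, whereas you resolve the ideal generated by the $n-f-k$ linearly independent linear forms directly by its Cartan complex; the two computations are equivalent, and the Cartan-complex fact you use is already part of the paper's toolkit (it appears in the proof of Theorem \ref{characterisation}).
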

\begin{proof}
Observe that reducing modulo a regular sequence does not change the Betti numbers
so the Betti numbers of $\mm^m$ give the Betti numbers of $J_{m,n}$.

(i) The Betti numbers of $J$ are the same as the Betti numbers of
the $m$-th power of the maximal ideal in the exterior algebra
$\overline{E}$ on $n-f-1$ variables:
$$\beta^E_i(J)=\beta^{\overline{E}}_i(\overline{\mm}^m)=\binom{n-f-1+i}{m+i}\binom{m+i-1}{i}.$$

(ii) In this case $J$ reduces to the maximal ideal in the exterior
algebra on $n-f-k$ variables because for each component $U_{1,n_i}$
one reduces modulo one variable as in Remark \ref{max reg seq}.
\end{proof}

\section{Examples}
\label{examples}

In this section we study some examples of matroids with small rank
or small number of elements.

Oxley enumerates in \cite[Table 1.1]{Ox} all non-isomorphic matroids
with three or fewer elements. The only loopless matroids among them
are the uniform matroids $U_{1,1}$, $U_{1,2}$, $U_{2,2}$, $U_{1,3}$,
$U_{2,3}$ and $U_{3,3}$. Their depth, complexity and regularity were
already computed in Example \ref{ex umn 2}.

Now we turn to matroids defined by central hyperplane arrangements
in $\C^l$ with $l\leq3$. The arrangement is called central if the
common intersection of all hyperplanes is not empty. A set of $t$
hyperplanes defines an independent set if and only if their
intersection has codimension $t$. Thus every two hyperplanes in a
central arrangement define an independent set and so the matroids
defined by central hyperplane arrangements are simple.

In $\C^1$ the only central hyperplane arrangement consists of a
single point, thus the underlying matroid is $U_{1,1}$.

In $\C^2$ a central hyperplane arrangement consists of $n$ lines
through the origin. The underlying matroid is $U_{2,n}$ if $n\geq 2$
and $U_{1,1}$ if $n=1$.

In $\C^3$ central hyperplane arrangement define various matroids.
One single hyperplane defines a $U_{1,1}$, two hyperplanes a
$U_{2,2}$. Three hyperplanes intersecting in a point give a
$U_{3,3}$, if their intersection is a line then the underlying
matroid is $U_{2,3}$. More generally $n$ hyperplanes through a line
define the matroid $U_{2,n}$. Such an arrangement is called a
\emph{pencil}. For the first time one obtains a matroid that is not
uniform with four hyperplanes taking three hyperplanes intersecting
in a line and a fourth in general position, i.e. the intersection of
the fourth with every two others is a point. The underlying matroid
has two components, one containing the first three hyperplanes and
one singleton component for the fourth hyperplane. It is the matroid
$U_{2,3}\oplus U_{1,1}$. Such an arrangement is an example for a
\emph{near pencil}. For simplicity we define the notions of pencil
and near pencil in terms of their underlying matroid.

\begin{defn}
A central arrangement of $n\geq 3$ hyperplanes is called
\begin{enumerate}
\item a \emph{pencil} if its underlying matroid is $U_{2,n}$.
\item a \emph{near pencil} if its underlying matroid is $U_{2,n-1}\oplus U_{1,1}$.
\end{enumerate}
\end{defn}
In abuse of notation we also call the matroid $U_{2,n}$ a pencil and
$U_{2,n-1}\oplus U_{1,1}$ a near pencil.

A matroid defined by $n$ hyperplanes in $\C^3$ is a simple matroid
of rank 3 unless it is not a pencil which has rank 2. We classify
all simple rank 3 matroids by their connectedness. Then we
determine their homological invariants depth, complexity and
regularity.

It is well-known that a near pencil is the unique
reducible central hyperplane arrangement in $\C^3$; we present a
homological proof for this fact.

\begin{thm}
\label{simple rank 3} Let $M$ be a simple matroid of rank 3. Then
$M$ is connected if and only if it is not a near pencil.
\end{thm}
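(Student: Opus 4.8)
The plan is to prove the two directions separately, using the decomposition of $M$ into its connected components together with the classification of simple matroids of rank $\leq 2$.

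First I would dispose of the easy direction. By the abuse of notation fixed before the theorem, a near pencil is the matroid $U_{2,n-1}\oplus U_{1,1}$, which is a direct sum of two nonempty matroids and hence has at least two connected components; in particular it is not connected. This already yields the implication ``$M$ connected $\Rightarrow$ $M$ is not a near pencil''. Homologically one may equally read this off from the results of the previous section: a near pencil has at least two components, so $\depth E/J\geq 2$ by Theorem \ref{depth general}, whereas by Theorem \ref{depth} connectedness forces $\depth E/J=1$.

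For the converse I would assume $M$ is simple of rank $3$ and \emph{not} connected, and show it must be a near pencil. Write $M=M_1\oplus\cdots\oplus M_k$ as the direct sum of its connected components, where $k\geq 2$. Each $M_i$ is a restriction of $M$, hence again simple, and since $M$ is loopless each $M_i$ has rank $r(M_i)\geq 1$. As rank is additive on direct sums, $r(M_1)+\cdots+r(M_k)=r(M)=3$ with $k\geq 2$ summands, each at least $1$; the only such partitions of $3$ are $3=2+1$ and $3=1+1+1$. It then remains to identify the components using the classification of low-rank simple matroids: a simple matroid of rank $1$ is a single point $U_{1,1}$ (two distinct rank-$1$ points would be parallel, contradicting simplicity), and a simple matroid of rank $2$ on $t$ elements is the uniform matroid $U_{2,t}$ with $t\geq 2$ (no loops and no parallel pairs force every pair to be independent, while rank $2$ forces every triple to be dependent). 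Hence in the case $3=2+1$ we obtain $M=U_{2,t}\oplus U_{1,1}$ with $t\geq 2$, a near pencil on $n=t+1\geq 3$ elements; and in the case $3=1+1+1$ we obtain $M=U_{1,1}\oplus U_{1,1}\oplus U_{1,1}=U_{3,3}=U_{2,2}\oplus U_{1,1}$, which is again a near pencil on $n=3$ elements. In either case $M$ is a near pencil, completing the proof.

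The routine verifications here are the two low-rank classifications; the one point that needs care is recognizing that the degenerate partition $3=1+1+1$, i.e.\ $M=U_{3,3}$, still falls under the near-pencil case via the identification $U_{2,2}\oplus U_{1,1}=U_{3,3}$, so that no disconnected simple rank-$3$ matroid is left unaccounted for. I expect the main (though still elementary) obstacle to be marshaling these classifications cleanly rather than any deep input; the homological machinery enters only to translate ``connected'' into the statement that $M$ has a single component.
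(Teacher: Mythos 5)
Your proof is correct, but it takes a genuinely different and more elementary route than the paper, which announces just before the theorem that it is deliberately giving a \emph{homological} proof of this well-known fact. The paper's argument disposes of $J=0$ (forcing $M=U_{3,3}$), then uses Theorem \ref{depth general} and Corollary \ref{reg=l-k} to compute $\reg J=\reg E/J+1=4-k\leq 2$ for a matroid with $k>1$ components, concludes $k=2$ and that $J$ has a $2$-linear projective resolution, and finally invokes the classification Theorem \ref{characterisation} to force $M=U_{2,n-1}\oplus U_{1,1}$. You instead argue purely combinatorially: additivity of rank over the component decomposition yields the partitions $3=2+1$ and $3=1+1+1$, and the (easy) classifications of simple matroids of rank $1$ and rank $2$ as $U_{1,1}$ and $U_{2,t}$ identify $M$ as a near pencil in both cases, with the degenerate case $U_{1,1}^{\oplus 3}=U_{3,3}=U_{2,2}\oplus U_{1,1}$ correctly absorbed (this is exactly the case the paper handles separately via $J=0$). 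Your route is self-contained and field-free, whereas the paper's proof implicitly requires $|K|=\infty$ since it passes through Theorems \ref{depth general} and \ref{characterisation}; it also makes transparent that the statement is pure matroid theory. What the paper's approach buys is not brevity but an illustration that its homological invariants (depth, regularity, linearity of resolutions) are strong enough to recover this classification, which is the stated purpose of Section \ref{examples}. One cosmetic refinement to yours: in the case $3=2+1$ the rank-$2$ summand is a connected component, hence $U_{2,t}$ with $t\geq 3$ (as $U_{2,2}$ is disconnected); your bound $t\geq 2$ is harmless, since $t=2$ merely reproduces the $1+1+1$ case, but the sharper bound keeps the two cases disjoint.
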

\begin{proof}
Note that $n\geq 3$ since $M$ has rank 3. If $M=U_{2,n-1}\oplus
U_{1,1}$ is a near pencil, it has two components if $n>3$ and three
components if $n=3$. Thus is it not connected in any case.

Suppose that $M$ has $k$ components with $k>1$ and let $J$ be its
Orlik-Solomon ideal. It is zero if and only if all subsets are
independent. Then $r(M)=3$ implies that $M=U_{3,3}$ is a near
pencil. So from now on we assume $J\not=0$. Since $M$ is simple, $J$
is generated in degree $\geq 2$ and thus $\reg J\geq 2$. Theorem
\ref{depth general} and Corollary \ref{reg=l-k} imply that
$$
\reg J=\reg E/J +1=3-k+1=4-k\leq 2.
$$
Thus the regularity of $J$ is exactly $2$ and $k=2$. Then $J$ has a
2-linear resolution and we may apply Theorem \ref{characterisation}
which says that $M=U_{m,n-i}\oplus U_{i,i}$ for some $0\leq m,i\leq
n$. We may assume $m<n-i$ otherwise $M$ is $U_{3,3}$ and has three
components. Since $M$ is simple, $m$ must be at least 2. Then
$3=r(M)=m+i$ so $i$ can only take the values $0$ or $1$. If $i=0$
then $M=U_{3,n}$ has one or three (if $n=3$) components, so this
case cannot occur. Hence $i=1$ and $M=U_{2,n-1}\oplus U_{1,1}$ is a
near pencil.
\end{proof}

In the following table we have collected the homological invariants
investigated in this paper of all simple ma\-troids of rank 3
which are given by the above
Theorem \ref{simple rank 3}, using \cite[Theorem 3.2]{AAH}, Theorem
\ref{depth general} and Corollary \ref{reg=l-k}. It is a
generalization of Proposition 4.6 of Schenck and Suciu in \cite{SS05},
even including the special case $n=3$.
\newline
\begin{center}
\begin{tabular}{c|ccc}
                    & $\depth E/J$ & $\cx E/J$ & $\reg E/J$\\
\hline
no near pencil      & 1             & $n-1$        & 2\\
\hline
near pencil, $n>3$  & 2             & $n-2$        & 1\\
\hline
near pencil, $n=3$  & 3             & 0          & 0
\end{tabular}
\end{center}
The number of simple rank 3 matroids is e.g.\ determined in \cite{Du}. If
$n=4$ there exist only two simple rank 3 matroids, namely $U_{3,4}$
and $U_{2,3}\oplus U_{1,1}$. If $n=5$ there exist 4 simple rank 3
matroids, $U_{3,5}$, $U_{2,4}\oplus U_{1,1}$ and two further which
cannot be expressed as sum of uniform matroids since they must be
connected by Theorem \ref{simple rank 3}. One is the underlying
matroid of an arrangement of five hyperplanes, three intersecting in
a line and two in general position to each other and to the first
three hyperplanes. The matroid has only one circuit with three
elements corresponding to the first three hyperplanes and three
circuits with four elements. The arrangement of five hyperplanes
defining the second matroid has twice three hyperplanes intersecting
in a line. The matroid has two circuits with three elements
corresponding to these triples, and one circuit with four elements,
not containing the element in the intersection of the other
circuits.

\end{document}